\pgfplotsset{compat=newest} 
\pgfplotsset{plot coordinates/math parser=false}
\newlength\fwidth
\definecolor{myBlue}{rgb}{0.0,0.0,0.55}
  \newcounter{mnote}
  \let\oldmarginpar\marginpar
    \renewcommand\marginpar[1]{\-\oldmarginpar[\raggedleft\footnotesize #1]%
    {\raggedright\footnotesize #1}}
\newtheorem{theorem}{Theorem}[section]
\newtheorem{lemma}[theorem]{Lemma}
\newtheorem{example}[theorem]{Example}
\newtheorem{remark}[theorem]{Remark}
\newcommand{\dx}{\,{\rm d}x}
\newcommand{\bs}{\boldsymbol}
\newcommand{\curl}{{\rm curl\,}}
\renewcommand{\div}{\operatorname{div}}
\newcommand{\grad}{{\rm grad\,}}
\newcommand{\tr}{\operatorname{tr}}
\newcommand{\dev}{\operatorname{dev}}
\newcommand{\skw}{\operatorname{skw}}
\newcommand{\vertiii}[1]{{\left\vert\kern-0.25ex\left\vert\kern-0.25ex\left\vert #1 
    \right\vert\kern-0.25ex\right\vert\kern-0.25ex\right\vert}}
\begin{document}
\title[Superconvergent and Divergence-Free FEM for Stokes]{Superconvergent and Divergence-Free Mixed Finite Element Methods for The Stokes Equation}
\author{Long Chen}
\address{Department of Mathematics, University of California at Irvine, Irvine, CA 92697, USA}
\email{chenlong@math.uci.edu}
\author{Xuehai Huang}%
\address{School of Mathematics, Shanghai University of Finance and Economics, Shanghai 200433, China}%
\email{huang.xuehai@sufe.edu.cn}%
\author{Chao Zhang}%
\address{School of General Education, Wenzhou Business College, Wenzhou 325035, China}%
\email{zhang.chao@wzbc.edu.cn}%
\author{Xinyue Zhao}%
\address{School of Mathematics, Shanghai University of Finance and Economics, Shanghai 200433, China}%
\email{zhaoxinyue20210921@163.com}%

\thanks{The first author was supported by NSF DMS-2309777 and DMS-2309785. The second author was supported by the National Natural Science Foundation of China Project 12171300.}

\makeatletter
\@namedef{subjclassname@2020}{\textup{2020} Mathematics Subject Classification}
\makeatother
\subjclass[2020]{
65N12;   
65N22;   
65N30;   
}

\begin{abstract}
This paper develops divergence-free mixed finite element methods for the Stokes equation. Using H(div)-conforming velocities and discontinuous pressures ensures the inf-sup condition for the velocity--pressure pair and yields pointwise divergence-free velocities. However, this choice makes the vector Laplacian difficult to discretize. Inspired by mass-conserving mixed formulations with stresses, tangential--normal continuous traceless tensor elements are introduced to discretize the vector Laplacian. An inf-sup condition for the weak div operator between the stress and velocity spaces is then proved. Two key properties characterize the scheme. First, the stress--velocity inf-sup stability gives a stable discretization of the vector Laplacian without additional stabilization, unlike discontinuous Galerkin or virtual element methods. Second, the scheme has the property that if a stress field is distributionally divergence-free against the discrete divergence-free velocity space, then it is also distributionally divergence-free against the continuous divergence-free velocity space. This property decouples the stress and velocity errors and leads to superconvergence. As a result, optimal-order error estimates are obtained for the stress, while the velocity and pressure converge at rates higher than the approximation orders of the chosen spaces. Numerical experiments confirm the theoretical results.
\end{abstract}
\keywords{Stokes equation, divergence-free, superconvergence, mixed finite element method}

\maketitle


\section{Introduction}

In this work, we develop mixed finite element methods for the Stokes equations on a bounded domain $\Omega \subset \mathbb{R}^d$ ($d \geq 2$) with a given body force $\boldsymbol{f} \in L^{2}(\Omega; \mathbb{R}^d)$:
\begin{equation}\label{eq:stokes}
\begin{aligned}
    -\Delta \boldsymbol{u} + \nabla p &= \boldsymbol{f} && \text{in } \Omega, \\
    \operatorname{div} \boldsymbol{u} &= 0 && \text{in } \Omega, \\
    \boldsymbol{u} &= \boldsymbol{0} && \text{on } \partial \Omega,
\end{aligned}
\end{equation}
where $\boldsymbol{u} \in H_0^1(\Omega; \mathbb{R}^d)$ and $p \in L^2_0(\Omega)$ represent the velocity and pressure, respectively.

Standard velocity--pressure finite element schemes for the Stokes equations depend crucially on the discrete divergence operator induced by the bilinear form $(\operatorname{div} \boldsymbol{v}_h, q_h)$. A stable scheme requires the discrete spaces $\boldsymbol{V}_h$ and $P_h$ to satisfy the inf-sup (or LBB) condition \cite{Brezzi1974}:
\begin{equation}\label{intro:div}
\inf_{q_h \in P_h \setminus \{0\}} \sup_{\boldsymbol{v}_h \in \boldsymbol{V}_h \setminus \{\boldsymbol{0}\}} \frac{(\operatorname{div} \boldsymbol{v}_h, q_h)}{\|\boldsymbol{v}_h\|_{\boldsymbol{V}_h} \|q_h\|_{P_h}} \geq \beta > 0,
\end{equation}
where $\beta$ is a constant independent of the mesh size. Standard stable pairs for \eqref{eq:stokes} include the Taylor--Hood element \cite{TaylorHood1973}, the MINI element \cite{ArnoldBrezziFortin1984}, and the nonconforming $P_1$--$P_0$ element \cite{CrouzeixRaviart1973}; a comprehensive review is available in \cite{BoffiBrezziFortin2013}.  However, these elements do not enforce the divergence-free condition pointwise. Losing exact mass conservation often leads to poor pressure robustness in the velocity error estimates. We refer to \cite{JohnLinkeMerdonNeilanEtAl2017} for a detailed discussion on the importance of strong divergence-free constraints.

Enforcing the discrete divergence-free condition, $(\operatorname{div} \boldsymbol{u}_h, q_h) = 0$ for all $q_h \in P_h$, does not yield a strong divergence-free velocity field (i.e., $\operatorname{div} \boldsymbol{u}_h = 0$ pointwise) because $P_h$ is only a proper subspace of $L^2_0(\Omega)$. Schemes that achieve this pointwise property are called strong divergence-free. In such schemes, one can choose an appropriate interpolant $\boldsymbol{u}_I$ such that $\operatorname{div} (\boldsymbol{u}_I - \boldsymbol{u}_h) = 0$ to decouple the velocity error from the pressure error via Galerkin orthogonality:
\begin{equation}\label{eq:probust}
(\nabla (\boldsymbol{u} - \boldsymbol{u}_h), \nabla (\boldsymbol{u}_I - \boldsymbol{u}_h)) = (p - p_h, \operatorname{div} (\boldsymbol{u}_I - \boldsymbol{u}_h)) = 0.
\end{equation}
If the scheme is not strong divergence-free, the term $(p, \operatorname{div} (\boldsymbol{u}_I - \boldsymbol{u}_h))$ does not generally vanish, introducing a pressure dependency into the velocity error. When it does vanish, we obtain the bound $\|\nabla (\boldsymbol{u} - \boldsymbol{u}_h)\| \leq \|\nabla (\boldsymbol{u} - \boldsymbol{u}_I)\|$, which depends only on the regularity of the velocity. This decoupling yields a pressure-robust error estimate.

Several methods provide strong divergence-free velocity approximations. These include the Scott--Vogelius element \cite{ScottVogelius1985} (which requires specific mesh conditions), smooth finite elements \cite{Zhang2011,ChenHuang2024a,ChenHuang2025}, conforming pairs on split meshes \cite{ArnoldQin1992a, Zhang2005, ChristiansenHu2018, FuGuzmanNeilan2020, GuzmanLischkeNeilan2020, HuZhangZhang2022}, and conforming rational functions \cite{GuzmanNeilan2014,GuzmanNeilan2014a}. Low-order nonconforming pairs \cite{MardalTaiWinther2002,TaiWinther2006,XieXuXue2008} are also used. 

A natural way to enforce the strong divergence-free condition is to employ $H(\operatorname{div})$-conforming velocity spaces together with discontinuous polynomial pressure spaces. For example, using the Raviart--Thomas ($\mathrm{RT}_k$) \cite{RaviartThomas1977,Nedelec1980} or Brezzi--Douglas--Marini ($\mathrm{BDM}_k$) \cite{BrezziDouglasMarini1985,Nedelec1986} spaces for velocity (with $\boldsymbol{u}\cdot \boldsymbol{n} = 0$ on $\partial \Omega$) paired with the discontinuous space $\mathbb{P}_{\ell}(\mathcal{T}_h)$ for pressure ensures LBB stability and yields a strongly divergence-free velocity field. 

However, because the discrete velocity space is not a subspace of $H^1(\Omega;\mathbb{R}^d)$, the vector Laplacian $-\Delta \boldsymbol{u}$ lacks a standard $H^1$ weak formulation. 

To address this, the vorticity--velocity--pressure formulation \cite{DuboisSalauenSalmon2003a,DuboisSalauenSalmon2003} introduces the vorticity $\boldsymbol{\omega} = \operatorname{curl} \boldsymbol{u}$ and treats the Laplacian as the Hodge Laplacian: $-\Delta \boldsymbol{u} = \operatorname{curl} \operatorname{curl} \boldsymbol{u} - \nabla \operatorname{div} \boldsymbol{u}$. Yet, when enforcing no-slip boundary conditions ($\boldsymbol{u} = \boldsymbol{0}$ on $\partial \Omega$), this formulation suffers from reduced stability and suboptimal convergence rates due to the absence of a proper Hilbert complex \cite{ArnoldFalkGopalakrishnan2012}. Symmetrical meshes can partially recover these rates, as proved in \cite{ChenWangZhong2015} for triangular MAC (TMAC) schemes, which lump mass to eliminate vorticity. 

Alternatively, the mass-conserving mixed stress (MCS) formulation \cite{GopalakrishnanLedererSchoeberl2020,GopalakrishnanLedererSchoeberl2020a} embeds $H(\operatorname{div})$ velocities in a pseudostress--velocity--pressure system. The finite element space for the stress is tangential--normal continuous. While standard stress formulations yield $\mathcal{O}(h^k)$ convergence \cite{GopalakrishnanLedererSchoeberl2020a}, enriching the stress space allows the $\mathrm{RT}_k$--$\mathrm{P}_k$ pair to achieve $\mathcal{O}(h^{k+1})$ convergence, with requirement $k \ge 1$ \cite{GopalakrishnanLedererSchoeberl2020}.
A sophisticated inf-sup condition is proved for tangential--normal continuous traceless tensors.

Inspired by the MCS framework, we use tangential--normal continuous elements, $\Sigma_{k}^{\operatorname{tn}}$, to discretize the vector Laplacian. Because $\Sigma_{k}^{\operatorname{tn}}$ is not $H(\operatorname{div})$-conforming, we define a weak divergence operator, $\operatorname{div}_w$, whose adjoint is the weak deviatoric gradient, $\operatorname{dev}\operatorname{grad}_w$, acting on the velocity element.

The viability of this method depends on establishing an inf-sup condition for $\operatorname{div}_w$ over the stress--velocity pair. This presents a structural tension: the velocity space must be large enough (e.g., $\mathrm{RT}_k$) to maintain the strong divergence-free property of $\div_w$, but small enough (e.g., $\mathrm{BDM}_k$) to ensure that $\operatorname{div}_w \Sigma_{k}^{\operatorname{tn}}$ remains surjective. We resolve this conflict by establishing the inf-sup condition directly on the divergence-free subspace:
\begin{equation}\label{intro:divw}
\inf_{\boldsymbol{v}_h\in \mathrm{BDM}_k\cap \ker(\operatorname{div})} 
\sup_{\boldsymbol{\tau}_h\in \Sigma_{k}^{\operatorname{tn}}} 
\frac{(\operatorname{div}_w \boldsymbol{\tau}_h, \boldsymbol{v}_h)_{0,h}}
{\|\boldsymbol{\tau}_h\|_{\operatorname{div}_w}\,\|\boldsymbol{v}_h\|} 
= \alpha > 0,
\end{equation}
leveraging the crucial algebraic identity $\mathrm{BDM}_k\cap \ker(\operatorname{div}) = \mathrm{RT}_k\cap \ker(\operatorname{div})$ as space $\mathrm{RT}_k$ is an enrichment of $\mathrm{BDM}_k$ to enlarge only the range of the divergence operator.
Furthermore, we establish the following distributional divergence-free property against divergence-free velocity fields: if
\[
(\operatorname{div}_w \boldsymbol{\tau}_h, \boldsymbol{v}_h)_{0,h} = 0
\qquad \forall\, \boldsymbol{v}_h \in \mathrm{BDM}_k \cap \ker(\operatorname{div}),
\]
then
\[
\langle \operatorname{div} \boldsymbol{\tau}_h, \boldsymbol{v}\rangle = 0
\qquad \forall\, \boldsymbol{v} \in H^1(\Omega;\mathbb R^d)\cap \ker(\operatorname{div}).
\]
This is sufficient to decouple the stress and velocity approximation errors.

The constraint $\div \boldsymbol{u}_h = 0$ is imposed by introducing a Lagrange multiplier, which has the physical interpretation of pressure. Specifically, let $\mathring{\mathbb{V}}_{k,\ell}^{\operatorname{div}}$ denote the $\mathrm{RT}_k$ space when $\ell=k$, or the $\mathrm{BDM}_k$ space when $\ell=k-1$, satisfying the boundary condition $\boldsymbol{u}\cdot \boldsymbol{n} = 0$ on $\partial \Omega$. The discrete problem seeks $(\boldsymbol{u}_h, p_h) \in \mathring{\mathbb{V}}_{k,\ell}^{\operatorname{div}} \times \mathbb{P}_{\ell}(\mathcal{T}_h)$ for $k\geq 0$ and $\ell \in \{k, k - 1\}$ such that:
\begin{equation}\label{eq:intro-scheme}
\begin{cases}
a_h(\boldsymbol{u}_h, \boldsymbol{v}_h) + b(\boldsymbol{v}_h, p_h) = (\boldsymbol{f}, \boldsymbol{v}_h) & \text{for all } \boldsymbol{v}_h \in \mathring{\mathbb{V}}_{k,\ell}^{\operatorname{div}}, \\
b(\boldsymbol{u}_h, q_h) = 0 & \text{for all } q_h \in \mathbb{P}_{\ell}(\mathcal{T}_h),
\end{cases}
\end{equation}
where $a_h(\boldsymbol{u}_h, \boldsymbol{v}_h) := (\operatorname{dev}\operatorname{grad}_w \boldsymbol{u}_h, \operatorname{dev}\operatorname{grad}_w \boldsymbol{v}_h)$ and $b(\boldsymbol{v}_h, q_h) := (\operatorname{div} \boldsymbol{v}_h, q_h)$. Section~\ref{sec:vectorLap} gives the precise definitions of these spaces and of the operator $\operatorname{dev}\operatorname{grad}_w$, while Section~\ref{sec:stokesmfem} presents a hybridized form of \eqref{eq:intro-scheme}. The tangential boundary condition $\boldsymbol{u}\cdot \boldsymbol{t}=0$ on $\partial\Omega$ is imposed weakly and may be replaced by $\boldsymbol{t}^{\intercal}\boldsymbol{\sigma}\boldsymbol{n}=0$ on $\partial\Omega$.

We emphasize that this method requires no additional stabilization. The discrete bilinear form $a_h(\cdot,\cdot)$ is not augmented by mesh-dependent penalty parameters or grad-div stabilization terms. Instead, stability follows directly from the discrete inf-sup conditions for the $\operatorname{div}_w$ \eqref{intro:divw} and $\operatorname{div}$ operators \eqref{intro:div}. The weak deviatoric gradient, $\operatorname{dev}\operatorname{grad}_w$, naturally incorporates face contributions via elementwise integration by parts, yielding a consistent, parameter-free discretization that recovers the continuous identity for smooth functions.

Let $\boldsymbol{\sigma} = \operatorname{dev} \operatorname{grad} \boldsymbol{u}$ and $\boldsymbol{\sigma}_h = \operatorname{dev} \operatorname{grad}_w \boldsymbol{u}_h \in \Sigma_{k}^{\operatorname{tn}}$ denote the continuous and discrete stresses, respectively. Owing to the strong divergence-free property of $\operatorname{div}_w$, the stress error decouples from the velocity and pressure (using an argument similar to \eqref{eq:probust}), achieving optimal $\mathcal{O}(h^{k+1})$ convergence in $L^2$. This yields the following estimate:
\begin{equation*}
\|\boldsymbol{\sigma} - \boldsymbol{\sigma}_h\| + \|\operatorname{dev}\operatorname{grad}_w (\boldsymbol{u}_I - \boldsymbol{u}_h)\| + \|Q_{\ell} p - p_h\| \lesssim h^{k+1} |\boldsymbol{\sigma}|_{k+1},
\end{equation*}
where $\boldsymbol{u}_I$ is an interpolant of $\boldsymbol{u}$ into $\mathring{\mathbb{V}}_{k,\ell}^{\operatorname{div}}$, and $Q_\ell$ is the $L^2$-orthogonal projection onto $\mathbb{P}_\ell(\mathcal{T}_h)$. Because $h^{k+1}$ exceeds the standard approximation capabilities of the chosen velocity and pressure spaces, both the velocity error $\|\operatorname{dev}\operatorname{grad}_w (\boldsymbol{u}_I - \boldsymbol{u}_h)\|$ and the pressure error $\|Q_{\ell} p - p_h\|$ are superconvergent. This structural superconvergence does not require mesh symmetry—unlike TMAC \cite{ChenWangZhong2015}—and it facilitates higher-order velocity post-processing.

Alternatively, discontinuous Galerkin (DG) methods can discretize the vector Laplacian \cite{WangYe2007,KimChungLee2013,CockburnSayas2014}. However, the DG method in \cite{WangYe2007} requires a penalty term for stability, and the hybridizable DG (HDG) method in \cite{CockburnSayas2014} uses $k$th-order polynomials for all variables, whereas for $\boldsymbol{u}_h \in \mathrm{BDM}_k$, it suffices to choose $p_h \in \mathbb{P}_{k-1}(\mathcal{T}_h)$. Similarly, the divergence-free weak virtual element method (VEM) \cite{ChenWang2019,BeiraodaVeigaLovadinaVacca2017,WeiHuangLi2021,HuangWang2023} generally requires stabilization for the projected discrete gradient and relies on the full $\mathrm{BDM}_k$ space; using the incomplete $\mathrm{RT}_k$ space degrades convergence. Consequently, their energy-norm velocity estimates are optimal, but not superconvergent.

Our framework extends to the lowest-order case $\ell = 0$, which the MCS framework does not address \cite{GopalakrishnanLedererSchoeberl2020,GopalakrishnanLedererSchoeberl2020a}. Specifically, the $(k, \ell) = (0, 0)$ configuration (illustrated in Fig. \ref{fig:lowestorder}) generalizes the classical MAC scheme \cite{HarlowWelchothers1965Numerical} to unstructured triangulations. It achieves the first-order convergence without the accuracy loss typical of vorticity-based formulations. Similarly, the $\mathrm{BDM}_1$--$\mathbb{P}_0$ pair, $(k, \ell) = (1, 0)$, provides a second-order scheme.

\begin{figure}[htbp]
\begin{center}
\includegraphics[width=3.1in]{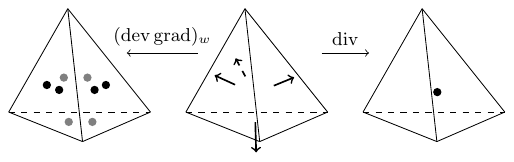}
\caption{Lowest-order configuration $(k, \ell) = (0, 0)$. The velocity--pressure pair is ${\rm RT}_0$--$\mathbb{P}_0$, and the stress is a piecewise constant traceless matrix with tangential--normal continuity.}
\label{fig:lowestorder}
\end{center}
\end{figure}

We can further hybridize \eqref{eq:intro-scheme} by relaxing the normal continuity of the velocity, which permits the choice $(k,\ell)=(0,-1)$. Here, the velocity is approximated by piecewise constant vectors, and normal continuity is enforced via a nonconforming linear pressure space. This configuration maintains the first-order convergence.

By using hybridization to enforce the tangential-normal continuity of the stress, the $(k, \ell) = (0, 0)$ case becomes equivalent to the Crouzeix--Raviart (CR)--$\mathbb{P}_0$ pair \cite[(23)]{Linke2014}, with one crucial modification: in the source term $(\boldsymbol{f}, \boldsymbol{v}_h)$, the test function $\boldsymbol{v}_h$ is locally reconstructed as an $\mathrm{RT}_0$ field rather than a standard CR basis function. While the classical CR--$\mathbb{P}_0$ method is not pressure-robust, this $\mathrm{RT}_0$-based reconstruction recovers pressure robustness while preserving the first-order convergence.

Although $H(\operatorname{div})$-conforming velocity fields naturally enforce mass conservation, the primary challenge remains the consistent discretization of the vector Laplacian. We address this by developing a distributional framework in Section~\ref{sec:vectorLap} and establishing the inf-sup condition for the stress--velocity pairs. In Section~\ref{sec:stokesmfem}, we formulate the mixed finite element methods for the Stokes equations by introducing the pressure as a Lagrange multiplier to enforce the divergence-free constraint. We prove the stability of $\operatorname{div}_w$ onto $\mathring{\mathbb V}_{k,k-1}^{\operatorname{div}}$, including the divergence-free subspace $\mathring{\mathbb{V}}_{k,\ell}^{\operatorname{div}} \cap \ker(\operatorname{div})$, and establish optimal-order error estimates. Furthermore, Section~\ref{sec:hybrid} explores equivalent discrete formulations, including virtual element and pseudostress methods. Finally, Section~\ref{sec:numericalresults} provides numerical experiments to verify our theoretical findings, and Section~\ref{sec:conclusion} offers concluding remarks.

\section{Distributional Discretization of the Vector Laplacian}\label{sec:vectorLap}
The Stokes equation can be viewed as a vector Poisson problem on the divergence-free subspace, i.e., $-\Delta \boldsymbol{u} = \boldsymbol{f}$ for $\boldsymbol{u}\in H_0^1(\Omega; \mathbb{R}^d)\cap \ker(\div).$ However, $H(\div)$ velocity fields do not have the $H^1$ regularity needed for a classical Galerkin treatment of the Laplacian. This section develops the distributional framework needed to address this issue.

By rewriting the Laplacian as a first-order system with a traceless tensor (pseudostress), weak divergence and weak deviatoric gradient operators, $\mathrm{div}_w$ and $\dev\grad_w$, are introduced. Their stability and commuting properties are established to ensure consistency with the underlying integration-by-parts identities. With this mixed approximation of the vector Laplacian in place, the full Stokes system in Section~\ref{sec:stokesmfem} follows by enforcing the divergence-free constraint through a pressure Lagrange multiplier.

\subsection{Notation}
Let $\Omega \subset \mathbb{R}^d$ ($d \ge 2$) be a bounded polytope with boundary $\partial\Omega$. For a bounded domain $D$ and an integer $m \ge 0$, we denote by $H^m(D)$ the standard Sobolev space with norm $\|\cdot\|_{m,D}$ and semi-norm $|\cdot|_{m,D}$. Let $H_0^m(D)$ be the closure of $C_0^\infty(D)$ in $H^m(D)$. We set $L^2(D) = H^0(D)$ with inner product $(\cdot,\cdot)_D$ and norm $\|\cdot\|_D$. When $D=\Omega$, the subscript $D$ is omitted. For any $D$, $h_D$ denotes its diameter and $\boldsymbol{n}_{\partial D}$ its unit outward normal vector. It will be abbreviated as $\boldsymbol{n}$, when the domain $D$ is clear in the context.

We define the following spaces for $H(\div)$:
$$
\begin{aligned}
H(\div,D) &=\{\boldsymbol{v}\in L^2(D;\mathbb{R}^d):\div\boldsymbol{v}\in L^2(D)\},\\
H_0(\div,D)&=\{\boldsymbol{v}\in H(\div,D):\boldsymbol{v}\cdot\boldsymbol{n}=0 \text{ on }\partial D\},
\end{aligned}
$$
endowed with the norm $\|\boldsymbol{v}\|_{\div} = (\|\boldsymbol{v}\|^2 + \|\div \boldsymbol{v}\|^2)^{1/2}$.
Let $L_0^2(D)=\{q\in L^2(D):\int_D q\,\dx=0\}$. For any subspace $V\subseteq L^2(D)$, we denote $V/\mathbb{R}=V\cap L_0^2(D)$.

Let $\{\mathcal{T}_h\}_{h>0}$ be a regular family of simplicial meshes of $\Omega$, with $h = \max_{T \in \mathcal{T}_h} h_T$ and $h_T = \operatorname{diam}(T)$. We denote by $\mathcal{F}_h$ and $\mathring{\mathcal{F}}_h$ the sets of all faces and interior faces, respectively. 
For any $T\in\mathcal{T}_h$, $\mathcal{F}(T)$ denotes the set of faces of $T$.

For each interior face $F = T^+ \cap T^-$, we fix a unit normal $\boldsymbol{n}_F$ to orient jumps and averages. Recall that $\boldsymbol{n}_{\partial T}$ is the outward unit normal on the boundary of an element $T$. On an interior face $F$, we set $\boldsymbol{n}_F=\boldsymbol{n}_{\partial T^+}|_F$ and thus $\boldsymbol{n}_{\partial T^-}|_F = -\boldsymbol{n}_F$. For boundary faces $F \subset \partial\Omega$, we set $\boldsymbol{n}_F = \boldsymbol{n}_{\partial\Omega}|_F$.
For two adjacent elements $T^{\pm}$ sharing a face $F$, the jump of a function $v$ is defined as
$$ \llbracket v \rrbracket|_F = v^+ \boldsymbol{n}_F \cdot \boldsymbol{n}_{\partial T^{+}} + v^- \boldsymbol{n}_F \cdot \boldsymbol{n}_{\partial T^{-}}, $$
and on boundary faces $F \subset \partial\Omega$, we set $\llbracket v \rrbracket = v|_F$.

For a face $F$ and a vector $\boldsymbol{v}\in\mathbb{R}^d$, the tangential projection is
$$
\Pi_F\boldsymbol{v} = (\boldsymbol{I} - \boldsymbol{n}_F\boldsymbol{n}_F^{\intercal})\boldsymbol{v}.
$$

For a domain $D$ and integer $k\ge0$, let $\mathbb{P}_k(D)$ be the space of polynomials on $D$ of degree $\le k$, $\mathbb{H}_k(D) = \mathbb{P}_k(D)\setminus\mathbb{P}_{k-1}(D)$, and $Q_{k,D}$ the $L^2$-projection onto $\mathbb{P}_k(D)$. We understand $\mathbb P_{-1}(D) = \{0\}$.
Set $\mathbb{M}=\mathbb{R}^{d\times d}$, and define $\mathbb{K}$ and $\mathbb{T}$ to be the skew-symmetric and
traceless subspaces of $\mathbb{M}$, respectively. For $\boldsymbol{\tau}\in\mathbb{M}$, define
\[
\operatorname{dev}\boldsymbol{\tau}
=\boldsymbol{\tau}-\tfrac{1}{d}(\operatorname{tr}\boldsymbol{\tau})\boldsymbol{I}\in\mathbb{T},
\qquad
\operatorname{skw}\boldsymbol{\tau}
=\tfrac{1}{2}\bigl(\boldsymbol{\tau}-\boldsymbol{\tau}^{\intercal}\bigr)\in\mathbb{K}.
\]

Define
\begin{align*}
&H^s(\mathcal{T}_h) = \{v\in L^2(\Omega): v|_T\in H^s(T)\;\textrm{ for all } T\in \mathcal T_h\} = \prod_{T\in \mathcal T_h} H^s(T),\\ 
&\mathbb{P}_k(\mathcal{T}_h) = \{v\in L^2(\Omega): v|_T\in\mathbb{P}_k(T)\;\textrm{ for all } T\in \mathcal T_h\} = \prod_{T\in \mathcal T_h} \mathbb P_{k}(T),\\
&\mathbb{P}_k(\mathcal{F}_h) = \{v\in L^2(\mathcal{F}_h): v|_F\in\mathbb{P}_k(F)\;\textrm{ for all } F\in \mathcal F_h\} = \prod_{F\in \mathcal F_h} \mathbb P_{k}(F),
\end{align*}
and similarly define $\mathbb{P}_k(\mathring{\mathcal{F}}_h)$ for interior faces only.
For a linear space $V(D)$ or $V(\mathcal{T}_h)$, we denote its vector- or tensor-valued version by
$$
V(D; \mathbb{X}) = V(D) \otimes \mathbb{X}, \quad V(\mathcal{T}_h; \mathbb{X}) = V(\mathcal{T}_h) \otimes \mathbb{X},
$$
where $\mathbb{X} \in \{\mathbb{R}^d, \mathbb{M}, \mathbb{T}, \mathbb{K}\}$.


Let $Q_{k}$ be the $L^2$-projection onto $\mathbb{P}_k(\mathcal{T}_h)$, or its appropriate vector- or tensor-valued version. We denote the element-wise gradient and divergence operators by $\nabla_h$ (or $\grad_h$ for vectors) and $\div_h$, respectively.

The main discrete spaces, operators, and interpolations are summarized in Table~\ref{tab:spaces-ops} for ease of reference.
\begin{table}[htbp]
\centering
\caption{Summary of main discrete spaces, operators, and interpolation maps.}
\label{tab:spaces-ops}
\renewcommand{\arraystretch}{1.25}
\small
\begin{tabular}{lll}
\toprule
\textbf{Object} & \textbf{Description} & \textbf{Role / Mapping} \\
\midrule
$\mathring{\mathbb{V}}^{\div}_{k,\ell}(\mathcal{T}_h)$ & $H(\div)$-conforming space & Discrete velocity \\
$\mathbb{P}_{\ell}^{-1}(\mathcal{T}_h)$      & Discontinuous polynomials  & Discrete pressure \\
$\Sigma^{\rm tn}_{k}(\mathcal{T}_h)$                   & t--n continuous traceless tensors & Discrete pseudo-stress \\
$\mathring{M}^{-1}_{k-1,k}(\mathbb R^d)$ &
Vector-valued broken pair & $\mathbb{P}_{k-1}(\mathcal{T}_h; \mathbb R^d) \times \mathbb{P}_k(\mathring{\mathcal{F}}_h; \mathbb R^d)$\\
$\div_w$                                & Weak divergence            & $\div_w: \Sigma^{\rm tn}_{k} \to \mathring{\mathbb{V}}^{\div}_{k,\ell}$ \\
$\dev\grad_w$                           & Weak deviatoric gradient   & $\dev\grad_w: \mathring{\mathbb{V}}^{\div}_{k,\ell} \to \Sigma^{\rm tn}_{k}$ \\
$I^{\rm tn}_{k}$                        & t--n interpolant            & $I^{\rm tn}_{k}: H^1 \to \Sigma^{\rm tn}_{k}$ \\
$I^{\div}_{k,\ell}$                     & $H(\div)$ interpolant       & $I^{\div}_{k,\ell}: H^1 \to \mathring{\mathbb{V}}^{\div}_{k,\ell}$ \\
\bottomrule
\end{tabular}
\end{table}

\subsection{Weak div stability for the scalar Laplacian}
We first consider the surjective map
$$
\operatorname{div}: L^{2}(\Omega;\mathbb{R}^d) \to H^{-1}(\Omega),
$$
and study its distributional finite element discretization. This operator is part of the distributional de~Rham complex introduced in~\cite{Braess.D;Schoberl.J2008}. The scalar case allows the introduction of the mesh-dependent inner products and norms needed below, and it also reveals the main stability issues in distributional discretizations. This framework is then extended by a tensor-product construction in Section~\ref{subsec:fespaces} to discretize the vector Laplacian, which is the main difficulty for $H(\operatorname{div})$-conforming velocity fields.

The $L^2$ space is discretized by the discontinuous polynomial space
\[
\Sigma_k^{-1}(\mathcal T_h;\mathbb R^d)
= \mathbb{P}_k(\mathcal{T}_h; \mathbb R^d),
\]
which we abbreviate as $\Sigma_k^{-1}$. The superscript $-1$ in $\Sigma_k^{-1}$ indicates that functions are taken elementwise with no
interelement continuity imposed. A natural norm on $\Sigma_k^{-1}$ is the standard $L^2$ norm $\|\cdot\|$.

Next, we introduce the product space of discontinuous polynomial spaces on $\mathcal T_h\times \mathcal F_h$:
\[
M_{k-1,k}^{-1}
:= \mathbb{P}_{k-1}(\mathcal{T}_h) \times \mathbb{P}_k(\mathcal{F}_h).
\]
Again, the superscript $-1$ implies no continuity is imposed between the piecewise polynomial spaces. 
An element of $M_{k-1,k}^{-1}$ is denoted by $u = (u_0, u_b)$.  
A natural product-type inner product is defined by
\[
((u_0, u_b), (v_0, v_b))_{0,h}
= (u_0, v_0) + \sum_{F\in\mathcal F_h} h_F (u_b, v_b)_{F},
\]
where the scaling factor $h_F$ ensures that the two terms are of comparable magnitude.  
The corresponding norm on $M_{k-1,k}^{-1}$ is
\[
\|u\|_{0,h}^2
= \|u_0\|^2 + \sum_{F\in \mathcal F_h} \|h_F^{1/2} u_b\|_{F}^2.
\]
The subspace
\[
\mathring{M}_{k-1,k}^{-1}
= \mathbb{P}_{k-1}(\mathcal{T}_h) \times \mathbb{P}_k(\mathring{\mathcal{F}}_h)
\]
can be viewed as a subspace of ${M}_{k-1,k}^{-1}$ by setting the values on boundary faces to zero.

For $\sigma \in \Sigma_k^{-1}$, define the operator 
$\div_w: \Sigma_k^{-1}\to \mathring{M}_{k-1,k}^{-1}$ by
\begin{equation*}
\div_w \sigma
= \big(\div_T \sigma, - h_F^{-1}[\![\sigma \cdot \boldsymbol{n}]\!]|_F\big)_{T\in \mathcal T_h,~F\in \mathring{\mathcal F}_h}
\in \mathring{M}_{k-1,k}^{-1}.
\end{equation*}
When $\sigma \in \Sigma_k^{-1}\cap H(\div,\Omega)$, we have $\div_w\sigma = (\div\sigma,0)$ since $[\![\sigma \cdot \boldsymbol{n}]\!]|_F = 0$ for all interior faces $F$; in this case we simply write $\div_w\sigma = \div\sigma$.

A graph norm associated with $\div_w$ is defined as
\[
\|\sigma\|_{\div_w}^2
:= \|\sigma\|^2 + \|\div_w\sigma\|_{0,h}^2
= \|\sigma\|^2 + \|\div_h\sigma\|^2
+ \|h^{-1/2}[\![\sigma \cdot \boldsymbol{n}]\!]\|_{\mathring{\mathcal F}_h}^2,
\]
where the scaling is chosen so that the last two terms are of comparable order.  
For $\sigma \in \Sigma_k^{-1}\cap H(\div,\Omega)$, we recover $\|\sigma\|_{\div_w} = \|\sigma\|_{\div}$. 

The following inf-sup condition can be established in a manner similar to Theorem~\ref{thm:infsupcurldiv}, and hence the proof is omitted.

\begin{lemma}\label{lem:divwinfsup}
We have the weak div stability: for a given $k\geq 0$, there exists a constant $\alpha>0$, independent of $h$, but might depend on $k$, such that
\begin{equation}\label{eq:divwinfsup}
 \inf_{v\in \mathring{M}^{-1}_{k-1,k}} 
 \sup_{\tau\in \Sigma_k^{-1}} 
 \frac{(\div_w \tau, v)_{0,h}}{\|\tau\|_{\div_w}\|v\|_{0,h}}
 = \alpha > 0.
\end{equation}
\end{lemma}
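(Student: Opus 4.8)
The plan is to establish the inf-sup condition by a constructive argument: given an arbitrary $v = (v_0, v_b) \in \mathring{M}^{-1}_{k-1,k}$, I will build a $\tau \in \Sigma_k^{-1}$ with $(\div_w \tau, v)_{0,h} \gtrsim \|v\|_{0,h}^2$ and $\|\tau\|_{\div_w} \lesssim \|v\|_{0,h}$, which immediately yields \eqref{eq:divwinfsup}. The construction proceeds in two stages. First, using the surjectivity of the continuous divergence $\div \colon H^1_0(\Omega;\mathbb{R}^d) \to L^2_0(\Omega)$ (or a right inverse on each component of the decomposition of $v_0$), together with a commuting quasi-interpolation into $\Sigma_k^{-1}$, produce $\tau_1 \in \Sigma_k^{-1} \cap H(\div,\Omega)$ with $\div \tau_1 = Q_k$-projection of something matching the mean-value-corrected $v_0$, so that $(\div_T \tau_1, v_0) \gtrsim \|v_0\|^2 - (\text{controllable boundary terms})$ and $\|\tau_1\|_{\div_w} \lesssim \|v_0\|$. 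Because $\tau_1$ is $H(\div)$-conforming, it contributes nothing to the face jump part of $\div_w$, so it does not interfere with the next stage.

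Second, I will handle the face component $v_b$ by a local, element-by-element (or face-by-face bubble) construction. On each interior face $F$ with neighboring simplices $T^\pm$, I choose $\tau_2$ supported near $F$ (using face-bubble-enriched polynomials in $\mathbb{P}_k$) whose normal jump $[\tau_2 \cdot \boldsymbol{n}]|_F$ equals, up to scaling, $-h_F v_b$, arranged so that $\sum_F h_F^{-1}([\tau_2\cdot\boldsymbol{n}], h_F v_b)_F = \sum_F \|h_F^{1/2} v_b\|_F^2 \cdot c$ for a fixed constant $c>0$, while keeping $\div_h \tau_2$ and $\|\tau_2\|$ bounded by $\|h^{1/2} v_b\|_{\mathcal{F}_h}$ via scaling/inverse estimates on the reference element. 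One then takes $\tau = \tau_1 + \beta \tau_2$ for a suitably small but fixed $\beta>0$, so that the cross terms (the contribution of $\tau_2$ to $(\div_T\tau, v_0)$ and the fact that $\div\tau_1$ has no jump) are absorbed. A standard "two norms, fixed linear combination" argument (as in the Fortin-operator or Boffi–Brezzi–Fortin style proofs of discrete inf-sup) then gives the claimed lower bound.

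The main obstacle I anticipate is the interaction between the two stages: the $H(\div)$-conforming part $\tau_1$ controls $v_0$ but its element divergence also pairs against nothing problematic, whereas the face-bubble part $\tau_2$ has nonzero element divergence that pairs against $v_0$ and must be dominated. Quantitatively this forces care in the scaling: one needs $\|\div_h \tau_2\| \lesssim \|h^{1/2} v_b\|_{\mathcal{F}_h}$ with a constant that, after multiplying by the free parameter $\beta$, is small compared to the coercivity constant from stage one — this is exactly where the $h_F$ weighting in $\|\cdot\|_{0,h}$ and the $h^{-1/2}$ weighting in $\|\cdot\|_{\div_w}$ are designed to cooperate, and verifying the scaling is dimensionally consistent is the delicate bookkeeping. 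A secondary technical point is ensuring the quasi-interpolation in stage one is $H(\div)$-stable and commutes with $\div$ well enough to preserve the mean-zero structure; this is classical (e.g. the Raviart–Thomas commuting interpolant composed with a stable smoothing operator) but must be invoked with the right boundary behavior so that $\tau_1 \cdot \boldsymbol{n} = 0$ on $\partial\Omega$ if needed, matching $\mathring{M}^{-1}_{k-1,k}$. Since the excerpt explicitly says the proof mirrors that of Theorem~\ref{thm:infsupcurldiv}, I expect the write-up to simply quote that parallel argument with the roles of $\curl$/$\div$ interchanged.
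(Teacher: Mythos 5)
Your proposal is correct and takes essentially the same route as the paper, which omits this proof and instead carries it out for the analogous Theorem~\ref{thm:infsupcurldiv}: there, the face component is handled by an elementwise construction of a $\tau_1$ with prescribed (scaled) face traces and vanishing interior moments, and the element component by a continuous right inverse of $\div$ composed with a commuting interpolant, after which the two bounds are combined. Your face-bubble construction and small-$\beta$ linear combination are the same argument in slightly different packaging, so no further comparison is needed.
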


Consider the mixed formulation of the Poisson equation $-\Delta u = f$ with Dirichlet boundary condition $u|_{\partial \Omega} = 0$:  
Find $\sigma \in \Sigma_k^{-1}$ and $u\in \mathring{M}_{k-1,k}^{-1}$ such that
\begin{subequations}\label{eq:hy}
 \begin{align}
(\sigma, \tau) + (\div_w \tau, u)_{0,h} &= 0 \qquad\qquad\; \forall\,\tau \in \Sigma_k^{-1},\\
(\div_w \sigma, v)_{0,h} &= -(f, v_0) \quad\; \forall\,v\in  \mathring{M}_{k-1,k}^{-1}.
\end{align}
\end{subequations}
This formulation is precisely the hybridized mixed finite element method using the ${\rm BDM}_k$--$\mathbb P_{k-1}$ pair~\cite{Arnold.D;Brezzi.F1985}.  
The problem \eqref{eq:hy} is well-posed by the inf-sup condition~\eqref{eq:divwinfsup}.


\subsection{Finite element spaces}\label{subsec:fespaces}
To distinguish the vector formulation from the scalar case, we use boldface symbols such as $\boldsymbol{\sigma}$ and $\boldsymbol{u}$. In the Stokes problem, we use the weak divergence operator to discretize the vector Laplacian. Let $\boldsymbol{\sigma}=\nabla \bs u$; then, in the distributional sense,
$-\Delta \bs u = -\div \boldsymbol{\sigma}.$

By the tensor-product construction, we obtain the weak divergence operator
\begin{equation}\label{eq:divM}
\div_w: \Sigma_k^{-1}(\mathbb M) \to \mathring{M}_{k-1,k}^{-1}(\mathbb R^d),
\end{equation}
where
\[
\Sigma_k^{-1}(\mathbb X):=\prod_{T\in \mathcal T_h} \mathbb P_{k}(T; \mathbb X)\quad\text{for }\mathbb X=\mathbb M \text{ or } \mathbb T;\quad \mathring{M}_{k-1,k}^{-1}(\mathbb R^d):=\mathbb R^d\otimes\mathring{M}_{k-1,k}^{-1}.
\]
The tensor space $\Sigma_k^{-1}(\mathbb M)$ in \eqref{eq:divM} will be restricted to the traceless subspace $\Sigma_k^{-1}(\mathbb{T})$, since for $\boldsymbol{\sigma} = \grad \boldsymbol{u}$ with $\div \boldsymbol{u} = 0$, we have $\tr \boldsymbol{\sigma} = \div \boldsymbol{u} = 0$. 
We then impose suitable continuity conditions on the range space and identify the subspaces of $\Sigma_k^{-1}(\mathbb{T})$ that lead to weakly divergence-stable pairs.

\subsubsection*{Spaces for the velocity}
We employ $H(\div)$-conforming finite elements to discretize the velocity field $\boldsymbol{u}$.  
For a simplex $T$ and integers $k \ge 0$ and $\ell = k \text{ or } k - 1$, the local space of shape functions is defined by
\begin{equation*}
\mathbb V_{k,\ell}^{\div}(T)
:= \mathbb P_{k}(T;\mathbb R^d) + \mathbb H_{\ell}(T)\boldsymbol{x}.
\end{equation*}
For $\ell \ge 0$, the degrees of freedom (DoFs) are given by (cf.~\cite[Section~3]{ChenHuang2022})
\begin{subequations}\label{eq:HdivDoF}
\begin{align}
(\boldsymbol v\cdot\boldsymbol n, q)_F, & \quad q\in\mathbb P_{k}(F), \quad F\in\mathcal F(T), \label{Hdivfemdof1}\\
(\boldsymbol v, \boldsymbol q)_T, & \quad \boldsymbol q\in \grad\mathbb P_{\ell}(T)\oplus
\{\boldsymbol q\in \mathbb P_{k-1}(T;\mathbb R^d): \boldsymbol q\cdot\boldsymbol x=0\}. \label{Hdivfemdof2}
\end{align}
\end{subequations}
When $\ell = k$, the space corresponds to the Raviart--Thomas (RT) element~\cite{RaviartThomas1977,Nedelec1980}, where the interior moments~\eqref{Hdivfemdof2} span $\mathbb{P}_{k-1}(T;\mathbb{R}^d)$. 
For $\ell = k-1$, the Brezzi--Douglas--Marini (BDM) element~\cite{BrezziDouglasMarini1985,Nedelec1986}, the interior moments~\eqref{Hdivfemdof2} forms a strict subspace.

The corresponding global finite element spaces are defined as
\begin{align*}
\mathbb V_{k,\ell}^{\div}(\mathcal T_h)
&:= \{\boldsymbol v_h\in H(\div,\Omega): 
\boldsymbol v_h|_T \in \mathbb V_{k,\ell}^{\div}(T)
\text{ for all } T\in\mathcal T_h\},\\
\mathring{\mathbb V}_{k,\ell}^{\div}(\mathcal T_h)
&:= \mathbb V_{k,\ell}^{\div}(\mathcal T_h)\cap H_0(\div,\Omega).
\end{align*}
For simplicity, we omit $\mathcal T_h$ in the notation afterwards.  

To study the weak div stability, we embed $\mathring{\mathbb V}_{k,\ell}^{\div}$ into $\mathring{M}_{k-1,k}^{-1}(\mathbb R^d)$.

\begin{lemma}
For $k\geq0$, and $\ell=k$ or $k-1$, the mapping $E: \mathring{\mathbb V}_{k,\ell}^{\div} \to \mathring{M}_{k-1,k}^{-1}(\mathbb R^d)$ defined by
\[
\boldsymbol{u} \mapsto 
\big(Q_{k-1,T}\boldsymbol{u},\, (\boldsymbol{0},\, \boldsymbol{n}_F\cdot \boldsymbol{u})_F\big)_{T\in\mathcal T_h,\, F\in\mathring{\mathcal F}_h}
\]
is injective.
\end{lemma}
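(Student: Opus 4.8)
The plan is to prove $\ker E=\{\boldsymbol 0\}$ by translating the two blocks of the identity $E\boldsymbol u=\boldsymbol 0$ into the vanishing of the full set of degrees of freedom \eqref{eq:HdivDoF} on each simplex, and then invoking unisolvence of the $H(\div)$-conforming element. So the first step is to take $\boldsymbol u\in\mathring{\mathbb V}_{k,\ell}^{\div}$ with $E\boldsymbol u=\boldsymbol 0$ and read off, for every $T\in\mathcal T_h$, that $Q_{k-1,T}\boldsymbol u=\boldsymbol 0$, i.e.\ $(\boldsymbol u,\boldsymbol q)_T=0$ for all $\boldsymbol q\in\mathbb P_{k-1}(T;\mathbb R^d)$, and, for every interior face $F\in\mathring{\mathcal F}_h$, that $\boldsymbol n_F\cdot\boldsymbol u|_F=0$ as an element of $\mathbb P_k(F)$.

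Next I would treat the two families of degrees of freedom separately. For the face degrees of freedom \eqref{Hdivfemdof1}: interior faces are already covered by $\boldsymbol n_F\cdot\boldsymbol u|_F=0$, and since $\boldsymbol u\in\mathring{\mathbb V}_{k,\ell}^{\div}\subset H_0(\div,\Omega)$ its normal trace is single-valued across interior faces and vanishes on $\partial\Omega$; hence $\boldsymbol u\cdot\boldsymbol n=0$ on all of $\partial T$ and every face degree of freedom of $\boldsymbol u|_T$ vanishes. This is the only place the homogeneous boundary condition is used, and explains why the space $\mathring{\mathbb V}_{k,\ell}^{\div}$ rather than $\mathbb V_{k,\ell}^{\div}$ appears. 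For the interior degrees of freedom \eqref{Hdivfemdof2}: the key observation is that, for $\ell\in\{k-1,k\}$, the test space $\grad\mathbb P_\ell(T)\oplus\{\boldsymbol q\in\mathbb P_{k-1}(T;\mathbb R^d):\boldsymbol q\cdot\boldsymbol x=0\}$ is contained in $\mathbb P_{k-1}(T;\mathbb R^d)$, because $\grad\mathbb P_\ell(T)\subseteq\mathbb P_{\ell-1}(T;\mathbb R^d)\subseteq\mathbb P_{k-1}(T;\mathbb R^d)$ and the second summand is a subspace of $\mathbb P_{k-1}(T;\mathbb R^d)$ by construction; hence $Q_{k-1,T}\boldsymbol u=\boldsymbol 0$ already forces every interior degree of freedom to vanish, in both the RT case ($\ell=k$) and the BDM case ($\ell=k-1$).

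With all degrees of freedom in \eqref{eq:HdivDoF} vanishing, unisolvence of the DoFs \eqref{eq:HdivDoF} for $\mathbb V_{k,\ell}^{\div}(T)$ gives $\boldsymbol u|_T=\boldsymbol 0$ for every $T\in\mathcal T_h$, so $\boldsymbol u=\boldsymbol 0$ and $E$ is injective. I do not expect a genuine obstacle here; the only point worth a line of justification is the inclusion of the interior-DoF test space into $\mathbb P_{k-1}(T;\mathbb R^d)$ (that is, the role of the constraint $\ell\le k$), and one should flag the low-order degenerate case: when $k=0$ the space $\mathbb P_{k-1}(T;\mathbb R^d)$ is trivial and there are no interior degrees of freedom, so injectivity follows from the face conditions alone. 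Equivalently, the whole argument can be summarized as: the data recorded by $E$ determines a superset of the defining degrees of freedom of $\mathring{\mathbb V}_{k,\ell}^{\div}$, and a finite element function is determined by its degrees of freedom.
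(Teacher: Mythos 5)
Your proposal is correct and follows exactly the paper's argument: read $E\boldsymbol u=\boldsymbol 0$ as the vanishing of all degrees of freedom in \eqref{eq:HdivDoF} (face DoFs from the normal-trace block plus the $H_0(\div)$ boundary condition, interior DoFs from $Q_{k-1,T}\boldsymbol u=\boldsymbol 0$ since the test space in \eqref{Hdivfemdof2} sits inside $\mathbb P_{k-1}(T;\mathbb R^d)$), then conclude by unisolvence. The paper states this in one line; you have merely supplied the routine details, including the correct observation about $\ell\le k$ and the degenerate case $k=0$.
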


\begin{proof}
If $(\boldsymbol{n}_F\cdot \boldsymbol{u})|_F = 0$ and $Q_{k-1,T}\boldsymbol{u} = \bs 0$,  
then all degrees of freedom in~\eqref{eq:HdivDoF} vanish, which implies $\boldsymbol{u}=\bs 0$.  
Hence, $E$ is injective.
\end{proof}

The tangential component $\Pi_F \boldsymbol{u} \in \mathbb{P}_{k}(F; \mathbb{R}^{d-1})$ is not necessarily continuous. We set this component to $\boldsymbol{0}$ here, as the stress space will satisfy tangential--normal continuity.

The embedding $E$ can be extended to $H^1(\Omega;\mathbb R^d)$, but the injectivity no longer holds.

\subsubsection*{Spaces for the  pseudo-stress}
For an integer $k \ge 0$, the local space of shape functions is $\mathbb P_{k}(T;\mathbb T)$,  
with DoFs defined by
\begin{subequations}\label{eq:curldivdof}
\begin{align}\label{eq:curdivfemdof1}
\int_{F} \boldsymbol{t}_i^{\intercal}\boldsymbol{\tau}\boldsymbol{n}\, q \, \mathrm{d}S, 
& \quad q \in \mathbb P_{k}(F), \ i = 1,2,\ldots, d-1,\ F\in\mathcal F(T), \\
\label{eq:curdivfemdof2}
\int_{T} \boldsymbol{\tau}: \boldsymbol{q} \, \mathrm{d}x, 
& \quad \boldsymbol{q} \in \mathbb{P}_{k-1}(T; \mathbb{T}).
\end{align}
\end{subequations}
The local unisolvence of these DoFs for $\mathbb P_{k}(T;\mathbb T)$ can be found in~\cite{ChenHuangZhang2025,GopalakrishnanLedererSchoeberl2020}.  
The corresponding global finite element space is
\begin{align*}
\Sigma_{k}^{\rm tn}(\mathcal T_h)
:= \{\boldsymbol{\tau}_h \in \Sigma_k^{-1}(\mathbb T) :
\text{ all DoFs in~\eqref{eq:curldivdof} are single-valued}\},
\end{align*}
and we abbreviate $\Sigma_{k}^{\rm tn}(\mathcal T_h)$ by $\Sigma_{k}^{\rm tn}$.
Functions in $\Sigma_{k}^{\rm tn}$ are tangential-normal continuous,  
i.e., $[\![\Pi_F\boldsymbol{\sigma}\boldsymbol{n}]\!]_F = 0$ for all $F \in \mathring{\mathcal F}_h$.

The weak divergence operator $\div_{w}$ defines a bilinear form on  
$\Sigma_k^{\rm tn} \times \mathring{\mathbb V}_{k,\ell}^{\div}$:
\begin{equation*}
\begin{aligned}
(\div_{w}\boldsymbol{\sigma}, \boldsymbol{v})_{0,h}
:= (\div_{w}\boldsymbol{\sigma}, E\boldsymbol{v})_{0,h}
&= \sum_{T\in\mathcal{T}_h}(\div\boldsymbol{\sigma}, \boldsymbol{v})_T
- \sum_{F\in\mathring{\mathcal{F}}_h}([\![\boldsymbol{n}^{\intercal}\boldsymbol{\sigma}\boldsymbol{n}]\!], \boldsymbol{n}\cdot\boldsymbol{v})_F\\
&= - \sum_{T\in\mathcal{T}_h}(\boldsymbol{\sigma}, \dev\grad\boldsymbol{v})_T
+ \sum_{F\in\mathcal{F}_h}(\Pi_F\boldsymbol{\sigma}\boldsymbol{n}, [\![\Pi_F\boldsymbol{v}]\!])_F.
\end{aligned}
\end{equation*}

\begin{remark}\rm
The bilinear form $(\div_{w}\cdot, \cdot)_{0,h}$ induces a mapping, still denoted by $\div_w: \Sigma_k^{\rm tn} \to (\mathring{\mathbb V}_{k,\ell}^{\div})'$. 
With the inner product $(\cdot, \cdot)_{0,h}$, we can identify $\mathring{M}_{k-1,k}^{-1}(\mathbb R^d)$ with its dual.  
Through the embedding $E$, we further identify $\mathring{\mathbb V}_{k,\ell}^{\div} \cong E(\mathring{\mathbb V}_{k,\ell}^{\div})$ with its dual,  
and hence can write the mapping as
\begin{equation}\label{eq:weak-div-map}
\div_{w}: \Sigma_k^{\rm tn} \to \mathring{\mathbb V}_{k,\ell}^{\div}.
\end{equation}
\end{remark}

\subsection{Weak div stability}
For each $T \in \mathcal{T}_h$, let $I_T^{\rm tn}: H^1(T;\mathbb{T}) \to \mathbb{P}_{k}(T; \mathbb{T})$  
denote the local interpolation operator defined by DoFs~\eqref{eq:curldivdof}.  
The corresponding global interpolation operator $I_k^{\rm tn}: H^1(\mathcal{T}_h;\mathbb{T}) \to \Sigma_k^{-1}(\mathbb T)$ is given by
\[
(I_k^{\rm tn}\boldsymbol{\tau})|_T := I_T^{\rm tn}(\boldsymbol{\tau}|_T), 
\quad \forall\, T \in \mathcal{T}_h, \ \boldsymbol{\tau} \in H^1(\mathcal{T}_h; \mathbb{T}).
\] 
Then $I_k^{\rm tn}\boldsymbol{\tau}\in \Sigma_{k}^{\rm tn}$ for 
$\boldsymbol{\tau}\in H^1(\Omega;\mathbb T)$.
For any $T\in\mathcal{T}_h$ and $\boldsymbol{\tau}\in H^m(T;\mathbb{T})$ with $1\le m\le k+1$,  
the standard interpolation estimate holds:
\begin{equation}\label{eq:Ihtnprop2}
\|\boldsymbol{\tau}-I_T^{\rm tn}\boldsymbol{\tau}\|_{T}
+ h_T|\boldsymbol{\tau}-I_T^{\rm tn}\boldsymbol{\tau}|_{1,T}
\lesssim h_T^m|\boldsymbol{\tau}|_{m,T}.
\end{equation}

We prove the following commutative property:
\begin{equation}\label{eq:commuting-divw}
Q_{k,k-1}^{\div}\div = \div_w I_k^{\rm tn},
\end{equation}
where $Q_{k,k-1}^{\div}$ is the $L^2$-projection onto $\mathring{\mathbb V}_{k,k-1}^{\div}$, and $\div_w$ is understood as in \eqref{eq:weak-div-map}.

\begin{lemma}\label{lm:interpolant}
For $\bs \tau \in H^1(\Omega;\mathbb T)$, we have 
\begin{equation}\label{eq:divinterpolant}
(\div \bs \tau, \bs v_h) = (\div_w I_k^{\rm tn} \bs \tau, \bs v_h)_{0,h}, \quad \forall\,\bs v_h\in \mathring{\mathbb V}_{k,k-1}^{\div}.
\end{equation}
\end{lemma}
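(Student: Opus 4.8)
The plan is to expand both sides using the definition of $(\div_w \cdot, \cdot)_{0,h}$ as a bilinear form on $\Sigma_k^{\rm tn} \times \mathring{\mathbb V}_{k,k-1}^{\div}$ and then exploit the degrees of freedom \eqref{eq:curldivdof} defining $I_k^{\rm tn}$. Using the second expression for $(\div_w \boldsymbol\sigma, \boldsymbol v)_{0,h}$ with $\boldsymbol\sigma = I_k^{\rm tn}\boldsymbol\tau$, and noting that $I_k^{\rm tn}\boldsymbol\tau \in \Sigma_k^{\rm tn}$ so the jump term $[\Pi_F(I_k^{\rm tn}\boldsymbol\tau)\boldsymbol n]_F$ vanishes on interior faces while $\boldsymbol v_h\cdot\boldsymbol n$ is continuous (and zero on $\partial\Omega$ since $\boldsymbol v_h \in \mathring{\mathbb V}^{\div}_{k,k-1}$), the boundary sum collapses and one is left with
\[
(\div_w I_k^{\rm tn}\boldsymbol\tau, \boldsymbol v_h)_{0,h}
= -\sum_{T\in\mathcal T_h} (I_k^{\rm tn}\boldsymbol\tau, \dev\grad \boldsymbol v_h)_T
+ \sum_{F\in\mathcal F_h} (\Pi_F (I_k^{\rm tn}\boldsymbol\tau)\boldsymbol n, [\Pi_F \boldsymbol v_h])_F .
\]
Meanwhile, integrating by parts on each $T$ and using $\div\boldsymbol\tau\in L^2$ with $\boldsymbol\tau\in H^1(\Omega;\mathbb T)$ together with continuity of $\boldsymbol\tau$ across faces gives $(\div\boldsymbol\tau,\boldsymbol v_h) = -\sum_T(\boldsymbol\tau,\dev\grad\boldsymbol v_h)_T + \sum_{F}(\Pi_F\boldsymbol\tau\boldsymbol n, [\Pi_F\boldsymbol v_h])_F$ (the trace part pairs with the continuous normal component of $\boldsymbol v_h$ and telescopes, and is zero on $\partial\Omega$). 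Subtracting, the identity \eqref{eq:divinterpolant} reduces to showing
\[
\sum_{T\in\mathcal T_h}(\boldsymbol\tau - I_k^{\rm tn}\boldsymbol\tau,\ \dev\grad\boldsymbol v_h)_T
= \sum_{F\in\mathcal F_h}(\Pi_F(\boldsymbol\tau - I_k^{\rm tn}\boldsymbol\tau)\boldsymbol n,\ [\Pi_F\boldsymbol v_h])_F .
\]

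The key observation is that $\dev\grad\boldsymbol v_h|_T \in \mathbb P_{k-1}(T;\mathbb T)$ whenever $\boldsymbol v_h|_T \in \mathbb V^{\div}_{k,k-1}(T) = \mathrm{BDM}_k(T)$ — this is precisely where $\ell = k-1$ enters. Indeed, $\grad$ of a degree-$k$ polynomial vector field is degree $k-1$, and the deviatoric part stays traceless. Therefore the volume DoF \eqref{eq:curdivfemdof2}, which enforces $\int_T (\boldsymbol\tau - I_k^{\rm tn}\boldsymbol\tau):\boldsymbol q\,\mathrm dx = 0$ for all $\boldsymbol q\in\mathbb P_{k-1}(T;\mathbb T)$, kills the left-hand side term-by-term: $(\boldsymbol\tau - I_k^{\rm tn}\boldsymbol\tau, \dev\grad\boldsymbol v_h)_T = 0$ for each $T$. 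So I need to show the right-hand face sum also vanishes. For each face $F$ and each element $T$ with $F\in\mathcal F(T)$, the quantity $\Pi_F(\boldsymbol\tau - I_k^{\rm tn}\boldsymbol\tau)\boldsymbol n = \sum_i \boldsymbol t_i\,(\boldsymbol t_i^\intercal(\boldsymbol\tau - I_k^{\rm tn}\boldsymbol\tau)\boldsymbol n)$, and the face DoF \eqref{eq:curdivfemdof1} forces $\int_F \boldsymbol t_i^\intercal(\boldsymbol\tau - I_k^{\rm tn}\boldsymbol\tau)\boldsymbol n\,q\,\mathrm dS = 0$ for all $q\in\mathbb P_k(F)$. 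Since $[\Pi_F\boldsymbol v_h]|_F$ is a tangential vector field on $F$ whose components lie in $\mathbb P_k(F)$ (as $\boldsymbol v_h|_T$ has polynomial degree $k$), each face integral vanishes.

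The main obstacle — really the only subtle point — is the careful bookkeeping of the face/trace terms: making sure that the boundary sums in both expansions genuinely match up before subtracting (orientation of $\boldsymbol n_F$ versus $\boldsymbol n_{\partial T}$, and the role of the $H_0(\div)$ condition on $\partial\Omega$), and confirming that the tangential trace $[\Pi_F\boldsymbol v_h]$ has components in $\mathbb P_k(F)$ so that the face DoF \eqref{eq:curdivfemdof1} is applicable. One should also double-check that $\dev\grad\boldsymbol v_h$ is genuinely in $\mathbb P_{k-1}(T;\mathbb T)$ for the full $\mathrm{BDM}_k$ space including the reduced interior part — this holds because $\mathbb V^{\div}_{k,k-1}(T)\subset\mathbb P_k(T;\mathbb R^d)$. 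Once these are in place, the commutativity \eqref{eq:divinterpolant} follows directly; no approximation estimates are needed, only the defining properties of the DoFs.
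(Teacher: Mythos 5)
Your proof is correct and follows essentially the same route as the paper's: expand $(\div_w I_k^{\rm tn}\boldsymbol\tau,\boldsymbol v_h)_{0,h}$ via the element-wise/face-wise form, integrate $(\div\boldsymbol\tau,\boldsymbol v_h)$ by parts, and use that $\dev\grad\boldsymbol v_h|_T\in\mathbb P_{k-1}(T;\mathbb T)$ and $[\Pi_F\boldsymbol v_h]\in\mathbb P_k(F;\mathbb R^{d-1})$ so that the DoFs \eqref{eq:curldivdof} defining $I_k^{\rm tn}$ annihilate every difference term. The only cosmetic difference is that you subtract the two expansions and show the residual vanishes, whereas the paper substitutes $\boldsymbol\tau$ for $I_k^{\rm tn}\boldsymbol\tau$ term by term; the substance is identical.
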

\begin{proof}
Notice that $\dev\grad_h \bs v_h\in \Sigma_{k-1}^{-1}(\mathbb T)$ and $[\![\Pi_F\bs v_h]\!]|_F\in \mathbb P_{k}(F;\mathbb R^{d-1})$ for $F\in\mathcal{F}_h$. So
\begin{align*}
(\div_w I_k^{\rm tn} \bs \tau, \bs v_h)_{0,h} &= - \sum_{T\in\mathcal{T}_h}(I_k^{\rm tn} \bs \tau, \dev\grad\boldsymbol{v}_h)_T + \sum_{F\in\mathcal{F}_h}(\Pi_F(I_k^{\rm tn} \bs \tau) \boldsymbol{n}, [\![\Pi_F\boldsymbol{v}_h]\!])_F\\
&= - \sum_{T\in\mathcal{T}_h}(\bs \tau, \dev\grad\boldsymbol{v}_h)_T + \sum_{F\in\mathcal{F}_h}(\Pi_F\bs \tau \boldsymbol{n}, [\![\Pi_F\boldsymbol{v}_h]\!])_F\\
& = (\div \bs \tau, \bs v_h).
\end{align*}
\end{proof}
This leads to the following inf-sup condition.

\begin{theorem}
There exists a constant $\alpha>0$, independent of $h$, such that
\begin{equation}\label{eq:infsupcurldivw}
\inf_{\boldsymbol{v}_h\in \mathring{\mathbb V}_{k,k-1}^{\div}} 
\sup_{\boldsymbol{\tau}_h\in \Sigma_{k}^{\rm tn}} 
\frac{(\div_w \boldsymbol{\tau}_h, \boldsymbol{v}_h)_{0,h}}
{\|\boldsymbol{\tau}_h\|_{\div_w}\,\|\boldsymbol{v}_h\|} 
= \alpha.
\end{equation} 
\end{theorem}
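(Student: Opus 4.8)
The plan is to establish the inf-sup condition \eqref{eq:infsupcurldivw} by the standard Fortin-type argument: for a given $\boldsymbol{v}_h\in \mathring{\mathbb V}_{k,k-1}^{\div}$, exhibit a tensor $\boldsymbol{\tau}_h\in\Sigma_k^{\rm tn}$ for which $(\div_w\boldsymbol{\tau}_h,\boldsymbol{v}_h)_{0,h}$ controls $\|\boldsymbol{v}_h\|^2$ from below while $\|\boldsymbol{\tau}_h\|_{\div_w}\lesssim\|\boldsymbol{v}_h\|$. The obvious candidate is $\boldsymbol{\tau}_h = I_k^{\rm tn}\boldsymbol{\sigma}$ where $\boldsymbol{\sigma}$ is the (continuous-level) solution of an auxiliary problem relating $\boldsymbol{v}_h$ to $\div\boldsymbol{\sigma}$. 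Concretely, since $\boldsymbol{v}_h\in H_0(\div,\Omega)$, I would solve a vector Poisson-type problem: find $\boldsymbol{w}\in H_0^1(\Omega;\mathbb R^d)$ with $-\Delta\boldsymbol{w}=\boldsymbol{v}_h$, set $\boldsymbol{\sigma} = \dev\grad\boldsymbol{w}$ (traceless because we may further arrange $\div\boldsymbol{w}=0$, or simply because we only need $\div(\dev\grad\boldsymbol{w}) = \Delta\boldsymbol{w} - \tfrac1d\grad\div\boldsymbol{w}$ and handle the gradient term separately), so that $\div\boldsymbol{\sigma}$ tested against $\boldsymbol{v}_h$ yields $\|\boldsymbol{v}_h\|^2$ up to lower-order terms. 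Elliptic regularity on the polytope gives $\boldsymbol{\sigma}\in H^1(\Omega;\mathbb T)$ with $\|\boldsymbol{\sigma}\|_1\lesssim\|\boldsymbol{v}_h\|$, so $I_k^{\rm tn}\boldsymbol{\sigma}\in\Sigma_k^{\rm tn}$ is well defined.

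The crucial input is Lemma~\ref{lm:interpolant}, which tells us that $(\div_w I_k^{\rm tn}\boldsymbol{\sigma},\boldsymbol{v}_h)_{0,h} = (\div\boldsymbol{\sigma},\boldsymbol{v}_h)$ exactly — the interpolation error does not pollute the duality pairing against the $H(\div)$-conforming velocity space. With this, the numerator of the inf-sup quotient becomes $(\div\boldsymbol{\sigma},\boldsymbol{v}_h)$, which by construction of $\boldsymbol{\sigma}$ equals $\|\boldsymbol{v}_h\|^2$ (after absorbing the $\grad\div$ correction, which can be made to vanish by choosing $\boldsymbol{w}$ divergence-free, using that $\boldsymbol{v}_h$ need not be divergence-free but we can instead solve the Stokes-type auxiliary problem $-\Delta\boldsymbol{w}+\grad r=\boldsymbol{v}_h$, $\div\boldsymbol{w}=0$, and take $\boldsymbol{\sigma}=\grad\boldsymbol{w}$ which is then traceless). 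For the denominator I would bound $\|I_k^{\rm tn}\boldsymbol{\sigma}\|_{\div_w}$: the $L^2$ part is controlled by $\|\boldsymbol{\sigma}\|_1$ via the $H^1$-stability of $I_k^{\rm tn}$ implied by \eqref{eq:Ihtnprop2}; for the graph-norm pieces $\|\div_h I_k^{\rm tn}\boldsymbol{\sigma}\|$ and $\|h^{-1/2}[\![I_k^{\rm tn}\boldsymbol{\sigma}\cdot\boldsymbol{n}]\!]\|_{\mathring{\mathcal F}_h}$, I would again use the commuting relation together with a duality/trace argument, or simply note that Lemma~\ref{lm:interpolant} identifies $\div_w I_k^{\rm tn}\boldsymbol{\sigma}$ (as a functional on $\mathring{\mathbb V}_{k,k-1}^{\div}$, hence after the identification in the Remark, as an element of that space) with something bounded by $\|\div\boldsymbol{\sigma}\| = \|\Delta\boldsymbol{w}\|\lesssim\|\boldsymbol{v}_h\|$.

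The main obstacle is the denominator estimate, specifically controlling the interface-jump term $\|h^{-1/2}[\![I_k^{\rm tn}\boldsymbol{\sigma}\cdot\boldsymbol{n}]\!]\|_{\mathring{\mathcal F}_h}$ in $\|I_k^{\rm tn}\boldsymbol{\sigma}\|_{\div_w}$. Although $\boldsymbol{\sigma}=\grad\boldsymbol{w}\in H^1$ has continuous normal components across faces, its tangential-normal interpolant $I_k^{\rm tn}\boldsymbol{\sigma}$ is only tangential-normal continuous, not $H(\div)$-conforming, so $[\![I_k^{\rm tn}\boldsymbol{\sigma}\cdot\boldsymbol{n}]\!]$ need not vanish; one must show it is $O(h^{1/2})$ in $L^2(F)$. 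This follows from writing $[\![I_k^{\rm tn}\boldsymbol{\sigma}\cdot\boldsymbol{n}]\!] = [\![(I_k^{\rm tn}\boldsymbol{\sigma}-\boldsymbol{\sigma})\cdot\boldsymbol{n}]\!]$ (using continuity of $\boldsymbol{\sigma}\cdot\boldsymbol{n}$), a scaled trace inequality on each element, and the interpolation estimate \eqref{eq:Ihtnprop2} — giving $\|h_F^{-1/2}[\![(I_k^{\rm tn}\boldsymbol{\sigma}-\boldsymbol{\sigma})\cdot\boldsymbol{n}]\!]\|_F\lesssim h_T^{-1}\|\boldsymbol{\sigma}-I_T^{\rm tn}\boldsymbol{\sigma}\|_T + |\boldsymbol{\sigma}-I_T^{\rm tn}\boldsymbol{\sigma}|_{1,T}\lesssim|\boldsymbol{\sigma}|_{1,T}$. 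Summing over faces and using elliptic regularity closes the bound. Everything else (the $L^2$ term, the $\div_h$ term) is routine given \eqref{eq:Ihtnprop2} and Lemma~\ref{lm:interpolant}, so once this jump term is handled the inf-sup constant $\alpha$ emerges with the required $h$-independence; that $\alpha$ equals exactly the constant in \eqref{eq:divwinfsup}, rather than merely being comparable, is because $E$ embeds $\mathring{\mathbb V}_{k,k-1}^{\div}$ isometrically (in the relevant norms) into $\mathring M_{k-1,k}^{-1}(\mathbb R^d)$ and the supremum over $\Sigma_k^{\rm tn}\subset\Sigma_k^{-1}(\mathbb T)$ inherits the scalar bound.
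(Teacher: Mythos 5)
Your overall strategy is exactly the paper's: pick $\boldsymbol{\tau}\in H^1(\Omega;\mathbb T)$ with $\div\boldsymbol{\tau}=\boldsymbol{v}_h$ and $\|\boldsymbol{\tau}\|_1\lesssim\|\boldsymbol{v}_h\|$, set $\boldsymbol{\tau}_h=I_k^{\rm tn}\boldsymbol{\tau}$, use the commuting identity \eqref{eq:divinterpolant} to make the numerator exactly $\|\boldsymbol{v}_h\|^2$, and bound $\|\boldsymbol{\tau}_h\|_{\div_w}$ by interpolation stability. Your treatment of the denominator (in particular the jump term via $[\![I_k^{\rm tn}\boldsymbol{\sigma}\boldsymbol{n}]\!]=[\![(I_k^{\rm tn}\boldsymbol{\sigma}-\boldsymbol{\sigma})\boldsymbol{n}]\!]$, a scaled trace inequality, and \eqref{eq:Ihtnprop2}) supplies detail the paper omits, and is correct.

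The one step that does not work as written is your construction of the traceless right inverse. If you solve $-\Delta\boldsymbol{w}+\grad r=\boldsymbol{v}_h$, $\div\boldsymbol{w}=0$, and set $\boldsymbol{\sigma}=\grad\boldsymbol{w}$, then $\boldsymbol{\sigma}$ is indeed traceless, but $\div\boldsymbol{\sigma}=\Delta\boldsymbol{w}=\grad r-\boldsymbol{v}_h$, not $-\boldsymbol{v}_h$; the pseudostress $\grad\boldsymbol{w}-r\boldsymbol{I}$, which does have divergence $-\boldsymbol{v}_h$, is not traceless. The leftover term is $(\grad r,\boldsymbol{v}_h)=-(r,\div\boldsymbol{v}_h)$, which does not vanish because a generic $\boldsymbol{v}_h\in\mathring{\mathbb V}_{k,k-1}^{\div}$ is not divergence-free, so the numerator is no longer $\|\boldsymbol{v}_h\|^2$ and its sign is uncontrolled. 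Your alternative ("handle the gradient term separately" after $-\Delta\boldsymbol{w}=\boldsymbol{v}_h$) runs into the same obstruction: removing the trace from $\grad\boldsymbol{w}$ perturbs the divergence by $\tfrac1d\grad\div\boldsymbol{w}$. What is actually needed is the surjectivity of $\div:H^1(\Omega;\mathbb T)\to L^2(\Omega;\mathbb R^d)$ with a bounded right inverse for $d\ge2$ (e.g., obtained by correcting $\dev\grad\boldsymbol{w}$ with a traceless potential whose divergence is the gradient term, or by solving the elliptic system $\div\dev\grad\boldsymbol{w}=\boldsymbol{v}_h$ directly); the paper simply asserts this existence without proof, so on this point you are no worse off than the paper, but the specific Stokes-based recipe you propose should be replaced or repaired.
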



\begin{proof}
		For any $\boldsymbol{v}_h\in \mathring{\mathbb V}_{k,k-1}^{\div}\subset L^2(\Omega;\mathbb R^d)$,  
		the surjectivity of the divergence operator mapping from $H^1(\Omega;\mathbb T)$ onto $L^2(\Omega;\mathbb R^d)$  (cf. \cite[(33)]{ArnoldHu2021}) guarantees that there exists $\boldsymbol{\tau}\in H^1(\Omega;\mathbb T)$ such that  
		$\div \boldsymbol{\tau} = \boldsymbol{v}_h$ and $\|\boldsymbol{\tau}\|_1 \lesssim \|\boldsymbol{v}_h\|$.  
		Let $\boldsymbol{\tau}_h = I_k^{\rm tn}\boldsymbol{\tau}$; then the commutative property~\eqref{eq:divinterpolant}
		implies $\div_w \boldsymbol{\tau}_h = \boldsymbol{v}_h$, which, combined with the boundedness of the interpolant $I_k^{\rm tn}$, yields the desired result~\eqref{eq:infsupcurldivw}.
\end{proof}

\begin{remark}\rm
If the range space is enriched to ${\rm RT}_k$, i.e.,  
$\boldsymbol{v}_h \in \mathring{\mathbb V}_{k,k}^{\div}$, then  
$\dev\grad_h \boldsymbol{v}_h \in \Sigma_k^{-1}(\mathbb T)$ not $\Sigma_{k-1}^{-1}(\mathbb T)$. 
Since DoF~\eqref{eq:curdivfemdof2} to define $I_k^{\rm tn}$ is for moments of degree $k-1$, the commutative property~\eqref{eq:divinterpolant} no longer holds.  
Consequently, the inf-sup condition \eqref{eq:infsupcurldivw} may fail for the pair  
$\Sigma_k^{\rm tn}$-$\mathring{\mathbb V}_{k,k}^{\div}$.
\end{remark}

\begin{remark}\label{rm:infsupsigma0}
On the other hand, to accommodate the slip boundary condition, we can reduce the stress space to impose the boundary condition $\Pi_F\boldsymbol{\sigma}\boldsymbol{n}=\boldsymbol{0}$ on $\partial\Omega$. Let 
\[
\mathring{H}^1(\Omega;\mathbb T) := \{\boldsymbol{\tau}\in H^1(\Omega;\mathbb T): \Pi_F\boldsymbol{\tau}\boldsymbol{n} = \boldsymbol{0} \text{ on } \partial\Omega\}.
\]
Following the proof of \eqref{eq:infsupcurldivw}, and using the surjectivity
$
\div \mathring{H}^1(\Omega;\mathbb T)=L^2(\Omega;\mathbb R^d)
$
together with the commuting property~\eqref{eq:divinterpolant}, one can show that
\begin{equation*}
\inf_{\boldsymbol{v}_h\in \mathring{\mathbb V}_{k,k-1}^{\div}} 
\sup_{\boldsymbol{\tau}_h\in \mathring{\Sigma}_{k}^{\rm tn}} 
\frac{(\div_w \boldsymbol{\tau}_h, \boldsymbol{v}_h)_{0,h}}
{\|\boldsymbol{\tau}_h\|_{\div_w}\,\|\boldsymbol{v}_h\|} 
= \alpha
\end{equation*} 
for some constant $\alpha>0$ independent of $h$, where
\[
\mathring{\Sigma}_{k}^{\rm tn}:= \{\boldsymbol{\tau}_h \in \Sigma_k^{\rm tn} : \Pi_F\boldsymbol{\tau}_h\boldsymbol{n} = \boldsymbol{0} \text{ on } \partial\Omega\}.
\] 
\end{remark}

Define the weak deviatoric gradient 
$\dev\grad_w: \mathring{\mathbb V}_{k,\ell}^{\div}\to \Sigma_k^{\rm tn}$ 
as the adjoint of $\div_w$.  
For $\boldsymbol{v}\in \mathring{\mathbb V}_{k,\ell}^{\div}$, 
$\dev\grad_w \boldsymbol{v}\in \Sigma_k^{\rm tn}$ is defined by
\[
(\dev\grad_w \boldsymbol{v}, \boldsymbol{\tau}) 
= -(\boldsymbol{v}, \div_w \boldsymbol{\tau})_{0,h}, 
\quad \forall\, \boldsymbol{\tau}\in \Sigma_k^{\rm tn}.
\]
Since $\div_{w}\Sigma_k^{\rm tn}=\mathring{\mathbb V}_{k,k-1}^{\div}$,  
the operator 
$\dev\grad_w: \mathring{\mathbb V}_{k,k-1}^{\div}\to \Sigma_k^{\rm tn}$ 
is injective, and thus $\|\dev\grad_w(\cdot)\|$ defines a norm on  
$\mathring{\mathbb V}_{k,k-1}^{\div}$.
Consequently, the inf-sup condition~\eqref{eq:infsupcurldivw} can be written equivalently as
\begin{equation}\label{eq:infsupdevgrad}
 \inf_{\boldsymbol{v}_h\in \mathring{\mathbb V}_{k,k-1}^{\div}} 
 \sup_{\boldsymbol{\tau}_h\in \Sigma_{k}^{\rm tn}} 
 \frac{(\div_w \boldsymbol{\tau}_h, \boldsymbol{v}_h)_{0,h}}
 {\|\boldsymbol{\tau}_h\|\,\|\dev\grad_w \boldsymbol{v}_h\|} 
 = 1 > 0.
\end{equation}


With these preparations, we consider the mixed finite element method for the vector Laplacian:
Find $\boldsymbol{u} \in H_0^1(\Omega; \mathbb{R}^d)$ such that $-\Delta \boldsymbol{u} = \boldsymbol{f}$. The discrete problem seeks $\boldsymbol{\sigma}_h \in \Sigma_k^{\rm tn}$ and $\boldsymbol{u}_h \in \mathring{\mathbb{V}}_{k,k-1}^{\div}$ satisfying:
\begin{equation}\label{eq:vecLapBDM}
\begin{cases}
(\boldsymbol{\sigma}_h, \boldsymbol{\tau}_h) + (\boldsymbol{u}_h, \div_w \boldsymbol{\tau}_h)_{0,h} = 0 & \forall\,\boldsymbol{\tau}_h \in \Sigma_k^{\rm tn}, \\
(\div_w\boldsymbol{\sigma}_h, \boldsymbol{v}_h)_{0,h} = -(\boldsymbol{f}, \boldsymbol{v}_h) & \forall\,\boldsymbol{v}_h \in \mathring{\mathbb{V}}_{k,k-1}^{\div}.
\end{cases}
\end{equation}
The mixed method \eqref{eq:vecLapBDM} is well-posed due to the inf-sup conditions \eqref{eq:infsupcurldivw} and \eqref{eq:infsupdevgrad}. However, a standard error analysis reveals that $\|\boldsymbol{\sigma} - \boldsymbol{\sigma}_h\|$ and $\|\dev\grad \boldsymbol{u} - \dev\grad_w I^{\div}_{k,k-1}\boldsymbol{u}\|$ are coupled. Subtracting the continuous system from \eqref{eq:vecLapBDM} yields:
$$
\begin{aligned}
(\boldsymbol{\sigma}-\boldsymbol{\sigma}_h,\boldsymbol{\tau}_h) + (\boldsymbol{u}-\boldsymbol{u}_h,\div_w\boldsymbol{\tau}_h)_{0,h} &= 0, \quad \forall\,\boldsymbol{\tau}_h \in \Sigma_k^{\rm tn}, \\
(\div_w(\boldsymbol{\sigma}-\boldsymbol{\sigma}_h),\boldsymbol{v}_h)_{0,h} &= 0, \quad \forall\,\boldsymbol{v}_h \in \mathring{\mathbb{V}}_{k,k-1}^{\div}.
\end{aligned}
$$
The term $(\boldsymbol{u}-\boldsymbol{u}_h,\div_w\boldsymbol{\tau}_h)_{0,h} = (\dev\grad \boldsymbol{u} - \dev\grad_w \boldsymbol{u}_h, \bs \tau_h)$ is bounded by the $H^1$-type error $\|\dev\grad \boldsymbol{u} - \dev\grad_w \boldsymbol{u}_h\|$ rather than by $\|\boldsymbol{\sigma}-\boldsymbol{\sigma}_h\|$ alone. This suboptimality arises because being weakly divergence-free with respect to $\mathring{\mathbb{V}}_{k,k-1}^{\div}$ does not imply strong divergence-free properties:
$$ (\div_w\boldsymbol{\sigma}_h, \boldsymbol{v}_h)_{0,h} = 0 \quad \forall\,\boldsymbol{v}_h \in \mathring{\mathbb{V}}_{k,k-1}^{\div} \quad \nRightarrow \quad \langle \div\boldsymbol{\sigma}_h, \boldsymbol{v}\rangle = 0 \quad \forall\,\boldsymbol{v} \in H^1(\Omega;\mathbb{R}^d). $$

However, we will demonstrate in Lemma \ref{lm:divfree} that this property is preserved if the velocity test space in \eqref{eq:vecLapBDM} is enriched from $\mathring{\mathbb{V}}_{k,k-1}^{\div}$ to $\mathring{\mathbb{V}}_{k,k}^{\div}$. In this enriched setting, the test space is sufficiently large to ensure that for $\boldsymbol{\tau}_h = I_h^{\rm tn}\boldsymbol{\sigma} - \boldsymbol{\sigma}_h$, $\div_w \boldsymbol{\tau}_h = 0$ holds. Consequently, the coupling term $(\boldsymbol{u}-\boldsymbol{u}_h, \div_w \boldsymbol{\tau}_h)_{0,h} = 0$ vanishes, yielding the optimal stress estimate.
By decoupling the stress error from the $O(h^k)$ velocity approximation error, we avoid the lower-order contamination and achieve the full $O(h^{k+1})$ convergence rate.

\subsection{Commuting property for the weak dev grad operator}
For each $T \in \mathcal{T}_h$, let $I_{(k,\ell),T}^{\rm div}: H^1(T;\mathbb{R}^d) \to \mathbb V_{k,\ell}^{\div}(T)$  
denote the local interpolation operator defined by the DoFs in~\eqref{eq:HdivDoF}.  
The corresponding global interpolation operator $I_{k,\ell}^{\div}: H^1(\mathcal{T}_h;\mathbb{R}^d)\to L^2(\Omega;\mathbb R^d)$ is given by
\[
(I_{k,\ell}^{\div}\boldsymbol{v})|_T := I_{(k,\ell),T}^{\rm div}(\boldsymbol{v}|_T), 
\quad \forall\, T \in \mathcal{T}_h, \ \boldsymbol{v} \in H^1(\mathcal{T}_h; \mathbb{R}^d).
\] 
Then $I_{k,\ell}^{\div}\boldsymbol{v}\in \mathbb V_{k,\ell}^{\div}$ for any 
$\boldsymbol{v}\in H^1(\Omega;\mathbb R^d)$.  
For $T\in\mathcal{T}_h$ and $\boldsymbol{v}\in H^m(T;\mathbb{R}^d)$ with $1\le m\le k+1$,  
the interpolation satisfies
\begin{equation}\label{eq:Ihdivprop2}
\|\boldsymbol{v}-I_{(k,\ell),T}^{\rm div}\boldsymbol{v}\|_{T}
+ h_T|\boldsymbol{v}-I_{(k,\ell),T}^{\rm div}\boldsymbol{v}|_{1,T}
\lesssim h_T^m|\boldsymbol{v}|_{m,T}.
\end{equation}

We are going to prove the following commutative property:
\begin{equation}\label{eq:Qkgrad}
Q_{k}^{\rm tn}\dev \grad = \dev\grad_w I_{k,k}^{\div},
\end{equation}
where $Q_k^{\rm tn}: L^2(\Omega; \mathbb T)\to \Sigma_{k}^{\rm tn}$ is the $L^2$-projection.

\begin{lemma}\label{lm:devinterpolant}
For any $\boldsymbol{u}\in H_0^1(\Omega; \mathbb R^d)$,  
\begin{equation}\label{eq:devinterpolant}
(\dev \grad \boldsymbol{u}, \boldsymbol{\tau}_h) 
=  (\dev\grad_w I_{k,k}^{\div} \boldsymbol{u}, \boldsymbol{\tau}_h), 
\quad \forall\,\boldsymbol{\tau}_h\in \Sigma_k^{\rm tn}.
\end{equation}
\end{lemma}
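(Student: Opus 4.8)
The plan is to unfold the definition of $\dev\grad_w I_{k,k}^{\div}\bs u$ via its adjoint relation with $\div_w$, and then peel off the interpolation operators one at a time using integration by parts and the degrees of freedom \eqref{eq:HdivDoF} and \eqref{eq:curldivdof}. Concretely, for $\bs\tau_h\in\Sigma_k^{\rm tn}$ I would start from
\[
(\dev\grad_w I_{k,k}^{\div}\bs u,\bs\tau_h) = -(I_{k,k}^{\div}\bs u,\div_w\bs\tau_h)_{0,h}
= -\sum_{T}(\div\bs\tau_h, I_{k,k}^{\div}\bs u)_T + \sum_{F\in\mathring{\mathcal F}_h}([\bs n^\intercal\bs\tau_h\bs n],\bs n\cdot I_{k,k}^{\div}\bs u)_F,
\]
using the first expression for $(\div_w\cdot,\cdot)_{0,h}$ from the subsection on pseudo-stress spaces. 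The goal is to show the right-hand side equals $-\sum_T(\div\bs\tau_h,\bs u)_T + \sum_{F\in\mathring{\mathcal F}_h}([\bs n^\intercal\bs\tau_h\bs n],\bs n\cdot\bs u)_F$, which upon reversing the integration by parts is exactly $(\dev\grad\bs u,\bs\tau_h) - \sum_{F\in\mathcal F_h}(\Pi_F\bs\tau_h\bs n,[\Pi_F\bs u])_F = (\dev\grad\bs u,\bs\tau_h)$, the boundary term vanishing because $\bs u\in H_0^1$ is continuous and has zero trace on $\partial\Omega$.

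So the two things to check are: (i) $(\bs n\cdot I_{k,k}^{\div}\bs u)|_F = (\bs n\cdot\bs u)|_F$ for each face $F$, against test functions $\bs n^\intercal\bs\tau_h\bs n|_F$; and (ii) $(\div\bs\tau_h, I_{k,k}^{\div}\bs u)_T = (\div\bs\tau_h,\bs u)_T$ for each $T$. For (i): the face DoF \eqref{Hdivfemdof1} pins down $\bs n\cdot I_{k,k}^{\div}\bs u$ against all of $\mathbb P_k(F)$, and since $\bs\tau_h\in\mathbb P_k(T;\mathbb T)$ we have $\bs n^\intercal\bs\tau_h\bs n|_F\in\mathbb P_k(F)$, so the face terms already agree exactly (not just in an averaged sense). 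For (ii): since $\bs\tau_h|_T\in\mathbb P_k(T;\mathbb T)$, $\div\bs\tau_h|_T\in\mathbb P_{k-1}(T;\mathbb R^d)$; the interior DoFs \eqref{Hdivfemdof2} for the $\mathrm{RT}_k$ element ($\ell=k$) span all of $\mathbb P_{k-1}(T;\mathbb R^d)$ — this is the sentence in the excerpt stating the interior moments for $\mathrm{RT}_k$ span $\mathbb P_{k-1}(T;\mathbb R^d)$ — hence $(\bs q, I_{k,k}^{\div}\bs u - \bs u)_T = 0$ for all $\bs q\in\mathbb P_{k-1}(T;\mathbb R^d)$, in particular for $\bs q = \div\bs\tau_h|_T$.

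The main obstacle is really just bookkeeping with the two different representations of $(\div_w\cdot,\cdot)_{0,h}$ and making sure the face-orientation conventions in the jump bracket $[\cdot]$ and the embedding $E$ line up, so that the sign of the boundary terms is consistent when I integrate by parts in both directions; once that is set up, steps (i) and (ii) are immediate from the DoF characterizations. It is worth emphasizing in the write-up that this is where the $\ell=k$ choice is essential: for $\mathrm{BDM}_k$ ($\ell=k-1$) the interior moments form a strict subspace of $\mathbb P_{k-1}(T;\mathbb R^d)$, so (ii) can fail and \eqref{eq:devinterpolant} would not hold — this contrasts with Lemma~\ref{lm:interpolant}, where it was the \emph{tensor} interpolant $I_k^{\rm tn}$ that needed degree-$(k-1)$ interior moments and there the $\mathrm{BDM}_k$ velocity was the right choice. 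Finally, \eqref{eq:Qkgrad} follows from \eqref{eq:devinterpolant}: since $\dev\grad_w I_{k,k}^{\div}\bs u\in\Sigma_k^{\rm tn}$ already, the identity says it is the element of $\Sigma_k^{\rm tn}$ whose $L^2$ inner product against every $\bs\tau_h\in\Sigma_k^{\rm tn}$ agrees with that of $\dev\grad\bs u$, i.e. it is $Q_k^{\rm tn}\dev\grad\bs u$.
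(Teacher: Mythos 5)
Your proposal is correct and follows essentially the same argument as the paper: both proofs reduce the identity to the two facts that $\div(\boldsymbol{\tau}_h|_T)\in\mathbb P_{k-1}(T;\mathbb R^d)$ is captured by the $\mathrm{RT}_k$ interior moments and that $\boldsymbol{n}^{\intercal}\boldsymbol{\tau}_h\boldsymbol{n}|_F\in\mathbb P_k(F)$ is captured by the face moments, so that $\boldsymbol{u}$ and $I_{k,k}^{\div}\boldsymbol{u}$ may be interchanged term by term. The only difference is cosmetic (you start from the right-hand side and the paper from the left), and your added remarks on why $\ell=k$ is essential and how \eqref{eq:Qkgrad} follows are accurate.
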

\begin{proof}
As $\div(\bs \tau_h|_T)\in \mathbb P_{k-1}(T;\mathbb R^d)$ and $[\![\bs n^{\intercal}\bs \tau_h\bs n]\!]|_F\in \mathbb P_k(F)$, we have
\begin{align*}
(\dev \grad \bs u, \bs \tau_h) &= -\sum_{T\in\mathcal{T}_h} (\bs u, \div \bs \tau_h)_T + \sum_{F\in\mathring{\mathcal{F}}_h}(\bs u\cdot \bs n, [\![\bs n^{\intercal}\bs \tau_h\bs n]\!])_F \\
 &= -\sum_{T\in\mathcal{T}_h} (I_{k,k}^{\div} \bs u, \div \bs \tau_h)_T + \sum_{F\in\mathring{\mathcal{F}}_h}(I_{k,k}^{\div} \bs u\cdot \bs n, [\![\bs n^{\intercal}\bs \tau_h\bs n]\!])_F \\
 &= (\dev\grad_w I_{k,k}^{\div} \bs u, \bs \tau_h).
\end{align*}
\end{proof}

The $\mathrm{RT}_k$ space is required to ensure the interior moments~\eqref{Hdivfemdof2} cover
$\mathbb P_{k-1}(T;\mathbb R^d)$ for $\div \boldsymbol{\tau}_h$, 
while for the $\mathrm{BDM}_k$ space $\mathbb V_{k,k-1}^{\div}$,  
the interior moments are not sufficient.  
Hence, the commutative property~\eqref{eq:devinterpolant} does not hold for  
$I_{k,k-1}^{\div}$.

The weak operator $\div_w$ can be regarded as a discretization of $\div$ in the distributional sense.  
We can preserve the strong divergence-free property in the following sense.

\begin{lemma}\label{lm:divfree}
Let $\boldsymbol{\sigma}_h\in \Sigma_k^{\rm tn}$.  
If
\[
(\div_w\boldsymbol{\sigma}_h, \boldsymbol{v}_h)_{0,h} = 0, 
\quad \forall\, \boldsymbol{v}_h\in \mathring{\mathbb V}_{k,k}^{\div},
\]
then $\div\boldsymbol{\sigma}_h = 0$ in the sense of distributions.
\end{lemma}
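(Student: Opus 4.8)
The plan is to unpack the definition of the bilinear form $(\div_w\boldsymbol{\sigma}_h,\cdot)_{0,h}$ and test it against a judiciously chosen family of velocity functions $\boldsymbol{v}_h\in\mathring{\mathbb V}_{k,k}^{\div}$ that isolates, term by term, both the interior divergence $\div\boldsymbol{\sigma}_h|_T$ and the normal jumps $[\boldsymbol{n}^{\intercal}\boldsymbol{\sigma}_h\boldsymbol{n}]_F$. Recall from the displayed identity preceding the remark that, for $\boldsymbol{v}_h\in\mathring{\mathbb V}_{k,k}^{\div}$,
\[
(\div_{w}\boldsymbol{\sigma}_h, \boldsymbol{v}_h)_{0,h}
= \sum_{T\in\mathcal{T}_h}(\div\boldsymbol{\sigma}_h, \boldsymbol{v}_h)_T
- \sum_{F\in\mathring{\mathcal{F}}_h}([\boldsymbol{n}^{\intercal}\boldsymbol{\sigma}_h\boldsymbol{n}], \boldsymbol{n}\cdot\boldsymbol{v}_h)_F .
\]
Since $\boldsymbol{\sigma}_h\in\Sigma_k^{\rm tn}\subset\Sigma_k^{-1}(\mathbb T)$, we have $\div\boldsymbol{\sigma}_h|_T\in\mathbb P_{k-1}(T;\mathbb R^d)$ on each element, and $[\boldsymbol{n}^{\intercal}\boldsymbol{\sigma}_h\boldsymbol{n}]|_F\in\mathbb P_k(F)$ on each interior face. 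The key point is that the DoFs~\eqref{eq:HdivDoF} of the $\mathrm{RT}_k$ space $\mathring{\mathbb V}_{k,k}^{\div}$ are rich enough to realize exactly these two polynomial degrees: the interior moments~\eqref{Hdivfemdof2} for $\mathrm{RT}_k$ span all of $\mathbb P_{k-1}(T;\mathbb R^d)$, and the face moments~\eqref{Hdivfemdof1} test $\boldsymbol{v}_h\cdot\boldsymbol{n}$ against all of $\mathbb P_k(F)$.

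First I would fix an arbitrary element $T_0$ and choose $\boldsymbol{v}_h$ supported on $T_0$ with vanishing normal trace on $\partial T_0$ (so all face jump terms disappear and $\boldsymbol{v}_h\in\mathring{\mathbb V}_{k,k}^{\div}$), whose interior moments realize $\boldsymbol{v}_h|_{T_0}=\div\boldsymbol{\sigma}_h|_{T_0}\in\mathbb P_{k-1}(T_0;\mathbb R^d)$; strictly, I would use the $L^2(T_0)$-duality between $\mathbb P_{k-1}(T_0;\mathbb R^d)$ and the span of the interior DoF functionals, which for $\mathrm{RT}_k$ is all of $\mathbb P_{k-1}(T_0;\mathbb R^d)$, to force $(\div\boldsymbol{\sigma}_h,\boldsymbol{v}_h)_{T_0}=\|\div\boldsymbol{\sigma}_h\|_{T_0}^2$. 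The hypothesis then gives $\|\div\boldsymbol{\sigma}_h\|_{T_0}=0$, hence $\div\boldsymbol{\sigma}_h=0$ on every element. Next, with the elementwise divergence now known to vanish, I would revisit the identity: for any $\boldsymbol{v}_h\in\mathring{\mathbb V}_{k,k}^{\div}$ the first sum is zero, so the hypothesis reduces to $\sum_{F\in\mathring{\mathcal F}_h}([\boldsymbol{n}^{\intercal}\boldsymbol{\sigma}_h\boldsymbol{n}],\boldsymbol{n}\cdot\boldsymbol{v}_h)_F=0$. Picking $\boldsymbol{v}_h$ whose normal trace is supported on a single interior face $F_0$ and equals $[\boldsymbol{n}^{\intercal}\boldsymbol{\sigma}_h\boldsymbol{n}]|_{F_0}\in\mathbb P_k(F_0)$ — again possible since the $\mathrm{RT}_k$ face DoFs test against all of $\mathbb P_k(F_0)$ — forces $\|[\boldsymbol{n}^{\intercal}\boldsymbol{\sigma}_h\boldsymbol{n}]\|_{F_0}=0$. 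Thus $[\boldsymbol{n}^{\intercal}\boldsymbol{\sigma}_h\boldsymbol{n}]=0$ on all interior faces.

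It remains to conclude $\div\boldsymbol{\sigma}_h=0$ as an element of $L^2(\Omega;\mathbb R^d)$, i.e.\ to show $\boldsymbol{\sigma}_h\in H(\div,\Omega)$ with vanishing distributional divergence. Because $\boldsymbol{\sigma}_h\in\Sigma_k^{\rm tn}$ is tangential-normal continuous, $[\Pi_F\boldsymbol{\sigma}_h\boldsymbol{n}]_F=0$ for all interior faces; combined with the just-proved $[\boldsymbol{n}^{\intercal}\boldsymbol{\sigma}_h\boldsymbol{n}]_F=0$, the full normal vector $\boldsymbol{\sigma}_h\boldsymbol{n}=\boldsymbol{n}(\boldsymbol{n}^{\intercal}\boldsymbol{\sigma}_h\boldsymbol{n})+\Pi_F\boldsymbol{\sigma}_h\boldsymbol{n}$ has no jump across any interior face, so $\boldsymbol{\sigma}_h\in H(\div,\Omega;\mathbb M)$. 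Then the usual integration-by-parts argument shows the distributional divergence equals the elementwise divergence, which is zero; hence $\div\boldsymbol{\sigma}_h=0$ in the strong sense, completing the proof.

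I do not anticipate a serious obstacle here: the argument is a standard ``choose test functions dual to the DoFs'' unisolvence computation. The one place demanding care is the very first reduction — one must check that the chosen $\boldsymbol{v}_h$, defined purely through interior moments on a single element with zero normal trace, genuinely lies in the global conforming space $\mathring{\mathbb V}_{k,k}^{\div}$ (it does, by the local-to-global compatibility of the $H(\div)$ DoFs and the vanishing boundary trace) and that for $\mathrm{RT}_k$ the interior moments~\eqref{Hdivfemdof2} really exhaust $\mathbb P_{k-1}(T;\mathbb R^d)$, which is exactly why the lemma requires $\mathring{\mathbb V}_{k,k}^{\div}$ and fails for $\mathbb V_{k,k-1}^{\div}$, paralleling the discussion after Lemma~\ref{lm:devinterpolant}.
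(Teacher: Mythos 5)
Your proof is correct, but it takes a genuinely different route from the paper's. The paper argues by duality: it tests the distributional divergence against an arbitrary $\boldsymbol{v}\in C_0^{\infty}(\Omega;\mathbb R^d)$, writes $\langle\div\boldsymbol{\sigma}_h,\boldsymbol{v}\rangle=-(\boldsymbol{\sigma}_h,\dev\grad\boldsymbol{v})$ (using that $\boldsymbol{\sigma}_h$ is traceless), and then invokes the commuting identity of Lemma~\ref{lm:devinterpolant} to replace $\dev\grad\boldsymbol{v}$ by $\dev\grad_w I_{k,k}^{\div}\boldsymbol{v}$, after which the hypothesis applied to $\boldsymbol{v}_I=I_{k,k}^{\div}\boldsymbol{v}\in\mathring{\mathbb V}_{k,k}^{\div}$ finishes the argument in one line; the role of the $\mathrm{RT}_k$ interior moments is hidden inside that lemma. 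Your argument is instead a direct unisolvence/localization computation: you exploit the richness of the $\mathrm{RT}_k$ degrees of freedom to isolate first the elementwise divergence (which lies in $\mathbb P_{k-1}(T;\mathbb R^d)$, exactly the span of the interior moments) and then each normal--normal jump (which lies in $\mathbb P_k(F)$, exactly the span of the face moments), and you assemble the conclusion using the tangential--normal continuity built into $\Sigma_k^{\rm tn}$. This is more elementary and self-contained --- it does not use Lemma~\ref{lm:devinterpolant} --- and it delivers the more explicit intermediate facts $\div\boldsymbol{\sigma}_h|_T=0$ and $[\boldsymbol{n}^{\intercal}\boldsymbol{\sigma}_h\boldsymbol{n}]_F=0$, making visible precisely why $\mathring{\mathbb V}_{k,k}^{\div}$ rather than $\mathring{\mathbb V}_{k,k-1}^{\div}$ is needed. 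The price is the test-function bookkeeping; your one imprecision, asking for $\boldsymbol{v}_h|_{T_0}=\div\boldsymbol{\sigma}_h|_{T_0}$ \emph{and} zero normal trace (generally incompatible), is already repaired by your own ``strictly'' clause, which prescribes only the moment $(\boldsymbol{v}_h,\div\boldsymbol{\sigma}_h)_{T_0}=\|\div\boldsymbol{\sigma}_h\|_{T_0}^2$ through the interior DoFs. Both proofs establish the same conclusion, namely that $\boldsymbol{\sigma}_h\in H(\div,\Omega;\mathbb M)$ with vanishing distributional divergence.
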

\begin{proof}
It suffices to prove $\div\boldsymbol{\sigma}_h = 0$ in the distributional sense:
\[
\langle \div \boldsymbol{\sigma}_h, \boldsymbol{v}\rangle = 0, 
\quad \forall\, \boldsymbol{v}\in C_0^{\infty}(\Omega;\mathbb R^d).
\]
Let $\bs v_I = I_{k,k}^{\div}\bs v$. Then by Lemma \ref{lm:devinterpolant},
\begin{align*}
\langle \div \bs \sigma_h, \bs v\rangle = - (\bs \sigma_h, \dev\grad \bs v)= -(\bs \sigma_h, \dev\grad_w \bs v_I ) = (\div_w\bs \sigma_h, \bs v_I)_{0,h} = 0.
\end{align*}
\end{proof}

\subsection{Restriction to divergence-free subspaces}
The commuting conditions \eqref{eq:divinterpolant} and \eqref{eq:devinterpolant} involve different velocity spaces, as shown in diagram \eqref{diag:main}:
\begin{equation}\label{diag:main}
\begin{tikzcd}[column sep=large, row sep=large]
H^1(\Omega; \mathbb{T}) \arrow[r, "\div"] \arrow[d, "I_k^{\rm tn}"'] & L^2(\Omega; \mathbb{R}^d) \arrow[d, "Q_{k,k-1}^{\div}"] \\
\Sigma_k^{\rm tn} \arrow[r, "\div_w"] & \mathring{\mathbb{V}}_{k,k-1}^{\div}
\end{tikzcd}
\quad
\begin{tikzcd}[column sep=large, row sep=large]
H^1(\Omega; \mathbb{R}^d) \arrow[r, "\dev \grad"] \arrow[d, "I_{k,k}^{\div}"'] & L^2(\Omega; \mathbb{T}) \arrow[d, "Q_{k}^{\rm tn}"] \\
\mathring{\mathbb{V}}_{k,k}^{\div} \arrow[r, "\dev \grad_w"] & \Sigma_k^{\rm tn}
\end{tikzcd}
\end{equation}
Weak divergence stability holds on the smaller space $\mathring{\mathbb{V}}_{k,k-1}^{\div}$, whereas strong divergence-freeness requires the larger space $\mathring{\mathbb{V}}_{k,k}^{\div}$. We resolve this structural mismatch by restricting our analysis to the divergence-free subspace. 

Recall the standard property for $H(\div)$ interpolants \cite{ArnoldFalkWinther2006}:
\begin{equation}\label{eq:Ihdivprop1}
\div(I_{k,\ell}^{\div}\boldsymbol{v}) = Q_{\ell}(\div\boldsymbol{v}), \quad \forall\, \boldsymbol{v}\in H^1(\Omega;\mathbb{R}^d).
\end{equation}
Equation \eqref{eq:Ihdivprop1} implies that $I_{k,\ell}^{\div}\boldsymbol{v} \in \ker(\div)$ whenever $\boldsymbol{v} \in \ker(\div)$. Notably, for $k \ge 1$, the additional interior moments in $\mathring{\mathbb{V}}_{k,k}^{\div}$ compared to $\mathring{\mathbb{V}}_{k,k-1}^{\div}$ only enrich the range of the divergence operator and do not alter the divergence-free part of the space. Consequently, for any $\boldsymbol{v} \in H^1(\Omega;\mathbb{R}^d) \cap \ker(\div)$, we have:
\begin{equation}\label{eq:Ikk}
I_{k,k}^{\div}\boldsymbol{v} = I_{k,k-1}^{\div}\boldsymbol{v} \in \mathring{\mathbb{V}}_{k,k-1}^{\div} \cap \ker(\div).
\end{equation}
This identity bridges the two diagrams in \eqref{diag:main}, allowing us to leverage the stability of $\mathring{\mathbb{V}}_{k,k-1}^{\div}$ alongside the consistency of the weak deviatoric gradient on $\mathring{\mathbb{V}}_{k,k}^{\div}$.

Consider the mixed finite element method for the vector Laplacian
\begin{equation}\label{eq:vectorLapdivfree}
-\Delta \boldsymbol{u} = \boldsymbol{f}, \quad \text{with } \boldsymbol{u}\in H_0^1(\Omega; \mathbb R^d)\cap \ker(\div).
\end{equation}
Find $\boldsymbol{\sigma}_h\in \Sigma_k^{\rm tn}$ and $\boldsymbol{u}_h\in \mathring{\mathbb V}_{k,\ell}^{\div}\cap \ker(\div)$ such that
\begin{subequations}\label{eq:vecLapdivfree}
\begin{align}
\label{eq:vecLapdivfree1}
(\boldsymbol{\sigma}_h, \boldsymbol{\tau}_h) + (\boldsymbol{u}_h, \div_w \boldsymbol{\tau}_h)_{0,h} &= 0, \qquad\qquad\; \forall\, \boldsymbol{\tau}_h\in \Sigma_k^{\rm tn},\\
\label{eq:vecLapdivfree2}
(\div_w \boldsymbol{\sigma}_h, \boldsymbol{v}_h)_{0,h} &= -(\boldsymbol{f}, \boldsymbol{v}_h), \quad \forall\, \boldsymbol{v}_h\in \mathring{\mathbb V}_{k,\ell}^{\div}\cap \ker(\div).
\end{align}
\end{subequations}

By the inf-sup condition \eqref{eq:infsupcurldivw} and the commuting property \eqref{eq:devinterpolant}, we obtain the following two properties.
\begin{lemma}
There exists a constant $\alpha>0$, independent of $h$, such that
\begin{equation}\label{eq:infsupdivfree}
\inf_{\boldsymbol{v}_h\in \mathring{\mathbb V}_{k,\ell}^{\div}\cap \ker(\div)} 
\sup_{\boldsymbol{\tau}_h\in \Sigma_{k}^{\rm tn}} 
\frac{(\div_w \boldsymbol{\tau}_h, \boldsymbol{v}_h)_{0,h}}
{\|\boldsymbol{\tau}_h\|_{\div_w}\,\|\boldsymbol{v}_h\|} 
= \alpha.
\end{equation} 
\end{lemma}
\begin{proof}
It follows from the inf-sup condition \eqref{eq:infsupcurldivw} and the fact  \[\mathring{\mathbb V}_{k,k}^{\div}\cap \ker(\div) = \mathring{\mathbb V}_{k,k-1}^{\div}\cap \ker(\div).
\] 
\end{proof}
\begin{lemma}\label{lm:strongdivfree}
We have the distributional divergence-free property against divergence-free velocity fields: if
\[
(\operatorname{div}_w \boldsymbol{\tau}_h, \boldsymbol{v}_h)_{0,h} = 0
\qquad \forall\, \boldsymbol{v}_h \in \mathring{\mathbb V}_{k,\ell}^{\div}\cap \ker(\div),
\]
then
\[
\langle \operatorname{div} \boldsymbol{\tau}_h, \boldsymbol{v}\rangle = 0
\qquad \forall\, \boldsymbol{v} \in H^1(\Omega;\mathbb R^d)\cap \ker(\operatorname{div}).
\]
\end{lemma}
\begin{proof}
By the definition of distributional $\div$, the commuting property \eqref{eq:devinterpolant}, and \eqref{eq:Ikk}
$$
\begin{aligned}
\langle \operatorname{div} \boldsymbol{\tau}_h, \boldsymbol{v}\rangle = - ( \boldsymbol{\tau}_h, \dev\grad \boldsymbol{v}) =  - (\boldsymbol{\tau}_h, \dev\grad_w I_{k,k}^{\div} \boldsymbol{v}) = -(\div_w\boldsymbol{\tau}_h, I_{k,k}^{\div} \boldsymbol{v}) = 0.  
\end{aligned}
$$ 
\end{proof}

\begin{theorem}\label{thm:divfreemixfem}
Let $k\geq0$. The mixed finite element method \eqref{eq:vecLapdivfree} is well-posed. Let $\boldsymbol{\sigma}=\dev\grad \boldsymbol{u}$, where $\boldsymbol{u}$ solves \eqref{eq:vectorLapdivfree}. Then
\begin{equation}\label{eq:sigmaestimate}
\|\dev\grad \boldsymbol{u} - \dev\grad_w \boldsymbol{u}_h\|
= \|\boldsymbol{\sigma} - \boldsymbol{\sigma}_h\|
\le \|\boldsymbol{\sigma} - I_k^{\rm tn}\boldsymbol{\sigma}\|,
\end{equation}
and 
\begin{equation}\label{eq:uIuhestimate}
\|\dev\grad_w  (I_{k,k}^{\div}\boldsymbol{u} - \boldsymbol{u}_h)\|
= \| Q_k^{\rm tn}(\boldsymbol{\sigma} - \boldsymbol{\sigma}_h)\|
\le \|\boldsymbol{\sigma} - I_k^{\rm tn}\boldsymbol{\sigma}\|.
\end{equation}
Moreover, if $\boldsymbol{u}\in H^{k+2}(\Omega;\mathbb R^d)$, then
\begin{equation}\label{eq:uestimate}
\|\dev\grad_w  (I_{k,k}^{\div}\boldsymbol{u} - \boldsymbol{u}_h)\| + \|\dev\grad \boldsymbol{u} - \dev\grad_w \boldsymbol{u}_h\|
\lesssim h^{k+1}\|\boldsymbol{u}\|_{k+2}.
\end{equation}
\end{theorem}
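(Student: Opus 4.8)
The plan is to first reduce \eqref{eq:vecLapdivfree} to a symmetric positive-definite problem on the divergence-free velocity space, and then run a Pythagorean best-approximation argument built on the two commuting relations of Lemmas~\ref{lm:interpolant} and~\ref{lm:devinterpolant}.

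For well-posedness, testing \eqref{eq:vecLapdivfree1} against all $\boldsymbol{\tau}_h\in\Sigma_k^{\rm tn}$ and using that $\dev\grad_w$ is the adjoint of $\div_w$ shows $(\boldsymbol{\sigma}_h-\dev\grad_w\boldsymbol{u}_h,\boldsymbol{\tau}_h)=0$ for all $\boldsymbol{\tau}_h$; since both $\boldsymbol{\sigma}_h$ and $\dev\grad_w\boldsymbol{u}_h$ lie in $\Sigma_k^{\rm tn}$, this forces $\boldsymbol{\sigma}_h=\dev\grad_w\boldsymbol{u}_h$. Substituting into \eqref{eq:vecLapdivfree2} and using the adjoint relation once more rewrites the system as: find $\boldsymbol{u}_h\in W_h:=\mathring{\mathbb V}_{k,\ell}^{\div}\cap\ker(\div)$ with $(\dev\grad_w\boldsymbol{u}_h,\dev\grad_w\boldsymbol{v}_h)=(\boldsymbol{f},\boldsymbol{v}_h)$ for all $\boldsymbol{v}_h\in W_h$. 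As noted before \eqref{eq:vecLapdivfree}, $W_h=\mathring{\mathbb V}_{k,k-1}^{\div}\cap\ker(\div)$ irrespective of $\ell\in\{k-1,k\}$ (a divergence-free function in $\mathrm{RT}_k$ is already in $\mathrm{BDM}_k$), so $\|\dev\grad_w(\cdot)\|$ restricts to a norm on $W_h$; the reduced bilinear form is thus an inner product and Lax--Milgram gives a unique $\boldsymbol{u}_h$, hence a unique $(\boldsymbol{\sigma}_h,\boldsymbol{u}_h)$.

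For the error estimates, put $\boldsymbol{u}_I:=I_{k,k}^{\div}\boldsymbol{u}$, which lies in $W_h$ by \eqref{eq:Ihdivprop1} (so $\div\boldsymbol{u}_I=Q_k\div\boldsymbol{u}=0$) together with the same $\mathrm{RT}_k$-versus-$\mathrm{BDM}_k$ fact. Two relations are needed. First, Lemma~\ref{lm:devinterpolant} and the adjoint definition give $(\boldsymbol{\sigma},\boldsymbol{\tau}_h)=(\dev\grad_w\boldsymbol{u}_I,\boldsymbol{\tau}_h)=-(\boldsymbol{u}_I,\div_w\boldsymbol{\tau}_h)_{0,h}$ for all $\boldsymbol{\tau}_h\in\Sigma_k^{\rm tn}$, i.e.\ the exact pair $(\boldsymbol{\sigma},\boldsymbol{u}_I)$ satisfies the analogue of \eqref{eq:vecLapdivfree1}; subtracting \eqref{eq:vecLapdivfree1} yields $(\boldsymbol{\sigma}-\boldsymbol{\sigma}_h,\boldsymbol{\tau}_h)+(\boldsymbol{u}_I-\boldsymbol{u}_h,\div_w\boldsymbol{\tau}_h)_{0,h}=0$ for all $\boldsymbol{\tau}_h$. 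Second, Lemma~\ref{lm:interpolant} gives $(\div_w I_k^{\rm tn}\boldsymbol{\sigma},\boldsymbol{v}_h)_{0,h}=(\div\boldsymbol{\sigma},\boldsymbol{v}_h)$ for $\boldsymbol{v}_h\in\mathring{\mathbb V}_{k,k-1}^{\div}$; since $\boldsymbol{u}$ solves a Stokes-type problem, $-\Delta\boldsymbol{u}=\boldsymbol{f}+\nabla p$ for some $p$, and for $\boldsymbol{v}_h\in W_h$ one has $(\nabla p,\boldsymbol{v}_h)=0$ (because $\div\boldsymbol{v}_h=0$ and $\boldsymbol{v}_h\cdot\boldsymbol{n}|_{\partial\Omega}=0$), hence $(\div\boldsymbol{\sigma},\boldsymbol{v}_h)=-(\boldsymbol{f},\boldsymbol{v}_h)$; comparing with \eqref{eq:vecLapdivfree2} gives $(\div_w(I_k^{\rm tn}\boldsymbol{\sigma}-\boldsymbol{\sigma}_h),\boldsymbol{v}_h)_{0,h}=0$ for all $\boldsymbol{v}_h\in W_h$. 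Taking $\boldsymbol{\tau}_h=I_k^{\rm tn}\boldsymbol{\sigma}-\boldsymbol{\sigma}_h$ in the first relation and using $\boldsymbol{u}_I-\boldsymbol{u}_h\in W_h$ in the second to kill the coupling term produces $(\boldsymbol{\sigma}-\boldsymbol{\sigma}_h,\,I_k^{\rm tn}\boldsymbol{\sigma}-\boldsymbol{\sigma}_h)=0$; then $\boldsymbol{\sigma}-I_k^{\rm tn}\boldsymbol{\sigma}=(\boldsymbol{\sigma}-\boldsymbol{\sigma}_h)-(I_k^{\rm tn}\boldsymbol{\sigma}-\boldsymbol{\sigma}_h)$ is an orthogonal decomposition, so $\|\boldsymbol{\sigma}-I_k^{\rm tn}\boldsymbol{\sigma}\|^2=\|\boldsymbol{\sigma}-\boldsymbol{\sigma}_h\|^2+\|I_k^{\rm tn}\boldsymbol{\sigma}-\boldsymbol{\sigma}_h\|^2$, which is \eqref{eq:sigmaestimate}. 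For \eqref{eq:uIuhestimate}, $\dev\grad_w\boldsymbol{u}_I=Q_k^{\rm tn}\boldsymbol{\sigma}$ by Lemma~\ref{lm:devinterpolant} (equivalently \eqref{eq:Qkgrad}) and $\dev\grad_w\boldsymbol{u}_h=\boldsymbol{\sigma}_h=Q_k^{\rm tn}\boldsymbol{\sigma}_h$, so $\dev\grad_w(\boldsymbol{u}_I-\boldsymbol{u}_h)=Q_k^{\rm tn}(\boldsymbol{\sigma}-\boldsymbol{\sigma}_h)$, whose norm is at most $\|\boldsymbol{\sigma}-\boldsymbol{\sigma}_h\|\le\|\boldsymbol{\sigma}-I_k^{\rm tn}\boldsymbol{\sigma}\|$. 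Finally \eqref{eq:uestimate} follows by inserting the interpolation bound \eqref{eq:Ihtnprop2} with $m=k+1$ into \eqref{eq:sigmaestimate}--\eqref{eq:uIuhestimate} and using $|\boldsymbol{\sigma}|_{k+1}\lesssim\|\boldsymbol{u}\|_{k+2}$.

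The main obstacle is reconciling the two commuting properties, which by construction (cf.\ the remark before \eqref{eq:vecLapdivfree}) cannot hold simultaneously on the full velocity spaces: Lemma~\ref{lm:interpolant} requires $\boldsymbol{v}_h$ in $\mathrm{BDM}_k$, while Lemma~\ref{lm:devinterpolant} requires the richer $\mathrm{RT}_k$ interior moments. The divergence-free subspace $W_h$ is precisely where both apply, since there $I_{k,k}^{\div}=I_{k,k-1}^{\div}$. I expect the only genuinely delicate points to be (i) verifying that $W_h$ is independent of $\ell$ and that $\dev\grad_w$ is injective on it, so the reduced problem is coercive, and (ii) the clean elimination of the hidden pressure when identifying $(\div\boldsymbol{\sigma},\boldsymbol{v}_h)=-(\boldsymbol{f},\boldsymbol{v}_h)$ on $W_h$; the rest is routine.
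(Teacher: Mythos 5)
Your proposal is correct and follows essentially the same route as the paper's proof: both arguments restrict to the divergence-free subspace where the two commuting identities \eqref{eq:divinterpolant} and \eqref{eq:devinterpolant} hold simultaneously, derive the orthogonality $(\boldsymbol{\sigma}-\boldsymbol{\sigma}_h,\, I_k^{\rm tn}\boldsymbol{\sigma}-\boldsymbol{\sigma}_h)=0$, and then invoke \eqref{eq:Qkgrad} for the velocity bound. The only cosmetic differences are that you establish well-posedness by eliminating $\boldsymbol{\sigma}_h$ and applying Lax--Milgram to the reduced coercive problem on $\mathring{\mathbb V}_{k,k-1}^{\div}\cap\ker(\div)$, whereas the paper cites the inf-sup condition \eqref{eq:infsupcurldivw} directly, and that you conclude \eqref{eq:sigmaestimate} via the Pythagorean identity rather than Cauchy--Schwarz.
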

\begin{proof}
The well-posedness is from \eqref{eq:infsupdivfree}. 
Denote by $\bs \sigma_I =  I_k^{\rm tn}\bs \sigma$ and $\bs u_I = I_{k,k}^{\div}\boldsymbol{u} \in \mathbb V_{k,k-1}^{\div}\cap \ker(\div)$. By \eqref{eq:divinterpolant}, it holds that $(\div_w \bs \sigma_I, \bs v_h)_{0,h} = (\div \bs \sigma, \bs v_h)= -(\bs f, \bs v_h) = (\div_w \bs \sigma_h, \bs v_h)_{0,h}$, so we conclude
\[
(\div_w (\bs \sigma_I - \bs \sigma_h), \bs v_h)_{0,h} = 0, \quad \forall\, \bs v_h\in \mathring{\mathbb V}_{k,\ell}^{\div}\cap \ker(\div).
\]
So by Lemma \ref{lm:strongdivfree}, as $\div \bs u = 0$, 
$$(\dev\grad\bs u, \bs \sigma_I - \bs \sigma_h) = - \langle \div(\bs \sigma_I - \bs \sigma_h), \bs u \rangle = 0.$$ 
We obtain the following orthogonality
\begin{align*}
(\bs \sigma - \bs \sigma_h, \bs \sigma_I - \bs \sigma_h) &= (\dev\grad\bs u, \bs \sigma_I - \bs \sigma_h) + (\div_w (\bs \sigma_I - \bs \sigma_h), \bs u_h)_{0,h} = 0.
\end{align*}
The estimate \eqref{eq:sigmaestimate} follows from this orthogonality:
\[
\|\bs \sigma - \bs \sigma_h\|^2 = (\bs \sigma - \bs \sigma_h, \bs \sigma - \bs \sigma_I)\leq \|\bs \sigma - \bs \sigma_h\|\|\bs \sigma - \bs \sigma_I\|.
\]

By the commutative property \eqref{eq:Qkgrad}, $\dev\grad_w\bs u_I = Q_k^{\rm tn} \dev\grad \bs u = Q_k^{\rm tn}\bs \sigma$. Then \eqref{eq:uIuhestimate} follows.
\end{proof}

Since $\mathbb{P}_k(T; \mathbb{R}^d) \subseteq \mathbb{V}_{k,\ell}^{\div}(T)$, the standard approximation theory for the velocity space $\mathring{\mathbb{V}}_{k,\ell}^{\div}$ yields an $H^1$-type error of order $O(h^k)$. Consequently, the estimate \eqref{eq:uestimate}, which establishes a higher convergence rate $O(h^{k+1})$, constitutes a velocity superconvergence result.

\section{Mixed Finite Element Methods for the Stokes Equation}\label{sec:stokesmfem}
In this section, we develop a mixed finite element discretization for the Stokes equation that ensures the pointwise divergence-free velocity. We introduce Lagrange multiplier spaces using piecewise polynomials for pressure and facial multipliers to relax the tangential-normal continuity of the pseudo-stress. After proving stability via a discrete inf-sup condition, we formulate the mixed scheme and show its equivalence to \eqref{eq:vecLapdivfree}. The resulting error analysis yields optimal-order, pressure-robust bounds. Finally, we establish $L^2$-velocity superconvergence of order $O(h^{k+2})$ and present a postprocessing strategy that achieves $H^1$-superconvergence.


\subsection{Spaces of Lagrange multipliers}

The subspace $\mathring{\mathbb V}_{k,\ell}^{\div}\cap \ker(\div)$ is not constructed explicitly.  
Instead, we enforce the constraint $\div \boldsymbol{u} = 0$ by introducing a Lagrange multiplier, which has the physical interpretation of pressure.

In the discretization, we use the piecewise polynomial space $\mathbb{P}_{\ell}(\mathcal{T}_h)$ to approximate the pressure $p$.  
By the commutative property \eqref{eq:Ihdivprop1}, it follows that
\begin{equation*}
\div\mathring{\mathbb{V}}_{k,\ell}^{\div}=\mathbb{P}_{\ell}(\mathcal{T}_h)/\mathbb R.
\end{equation*}

To relax the tangential-normal continuity condition in the space $\Sigma_k^{\rm tn}$, we introduce the Lagrange multiplier spaces  
\begin{equation*}
\Lambda_k = \mathbb P_k(\mathcal F_h; \mathbb R^{d-1}) \quad\textrm{and}\quad  
\mathring{\Lambda}_k = \mathbb P_k(\mathring{\mathcal F}_h; \mathbb R^{d-1}). 
\end{equation*}
The embedding $E: \mathring{\mathbb V}_{k,\ell}^{\div} \times \mathring{\Lambda}_k \to \mathring{M}_{k-1,k}^{-1}(\mathbb R^d)$  
is defined as
\[
(\boldsymbol{u}, \boldsymbol{\lambda}) \mapsto 
\big(Q_{k-1,T}\boldsymbol{u},\, (\boldsymbol{\lambda},\, \boldsymbol{n}_F\cdot \boldsymbol{u})_F\big)_{T\in\mathcal T_h,\, F\in\mathring{\mathcal F}_h},
\]
which remains injective.

The stress space is relaxed to  
$\Sigma_k^{-1}(\mathbb T) = \mathbb P_k(\mathcal T_h; \mathbb T)$.  
The weak divergence operator $\div_w$ then induces the bilinear form on $\Sigma_k^{-1}(\mathbb T)\times (\mathring{\mathbb{V}}_{k,\ell}^{\div}\times \mathring{\Lambda}_k)$:
\begin{equation}\label{eq:divwT}
(\div_w\boldsymbol{\sigma}, (\boldsymbol{v}, \boldsymbol{\mu}))_{0,h}
=\! \sum_{T\in \mathcal T_h}(\div \boldsymbol{\sigma}, \boldsymbol{v})_T  
- \!\sum_{F\in\mathring{\mathcal{F}}_h}\!\Big(
([\![\boldsymbol{n}^{\intercal}\boldsymbol{\sigma}\boldsymbol{n}]\!],\, \boldsymbol{n}\cdot\boldsymbol{v})_F 
+ ([\![\Pi_F\boldsymbol{\sigma}\boldsymbol{n}]\!],\, \boldsymbol{\mu})_F\Big),
\end{equation}
which induces a mapping, still denoted by $\div_w: \Sigma_k^{-1}(\mathbb T)\to (\mathring{\mathbb{V}}_{k,\ell}^{\div}\times \mathring{\Lambda}_k)'\cong (\mathring{\mathbb{V}}_{k,\ell}^{\div}\times \mathring{\Lambda}_k)$ by the $(\cdot,\cdot)_{0,h}$ inner product through the embedding $E$. 

Its adjoint, still denoted by  
$\dev\grad_w: \mathring{\mathbb V}_{k,\ell}^{\div} \times \mathring{\Lambda}_k \to \Sigma_k^{-1}(\mathbb T)$,  
is defined by
\begin{equation*}
(\boldsymbol{\sigma}, \dev\grad_w (\boldsymbol{v}, \boldsymbol{\mu})) := -(\div_w\boldsymbol{\sigma}, (\boldsymbol{v}, \boldsymbol{\mu}))_{0,h},
\end{equation*}
for $\boldsymbol{\sigma}\in \Sigma_k^{-1}(\mathbb T)$,  
$\boldsymbol{v}\in \mathring{\mathbb V}_{k,\ell}^{\div}$, and  
$\boldsymbol{\mu}\in \mathring{\Lambda}_k$.

The operator $\dev \grad_w$ can be extended to  
\[
\dev \grad_w: H^1(\mathcal{T}_h;\mathbb{R}^d)\times L^2(\mathcal{F}_h;\mathbb R^{d-1}) 
\to \Sigma_k^{-1}(\mathbb T)
\] 
as follows: for $(\boldsymbol{v}, \boldsymbol{\mu})\in H^1(\mathcal{T}_h;\mathbb{R}^d)\times L^2(\mathcal{F}_h;\mathbb R^{d-1})$,  
let $\dev \grad_w(\boldsymbol{v}, \boldsymbol{\mu})\in \Sigma_k^{-1}(\mathbb T)$ be defined elementwise by
\begin{equation*}
(\dev \grad_w(\boldsymbol{v}, \boldsymbol{\mu}), \boldsymbol{\tau})_T
= -(\boldsymbol{v},\div\boldsymbol{\tau})_T
+ (\boldsymbol{n}\cdot\boldsymbol{v},\, \boldsymbol{n}^{\intercal}\boldsymbol{\tau}\boldsymbol{n})_{\partial T}
+ (\boldsymbol{\mu},\, \Pi_F\boldsymbol{\tau}\boldsymbol{n})_{\partial T},
\end{equation*}
for all $\boldsymbol{\tau}\in \mathbb{P}_{k}(T;\mathbb{T})$ and $T\in\mathcal{T}_h$.

\begin{lemma}\label{lm:weakdevgrad}
For $\boldsymbol{v}\in H^1(\Omega;\mathbb R^d)$ and $\boldsymbol{\mu}\in L^2(\mathcal{F}_h;\mathbb R^{d-1})$,  
it holds that
\begin{equation}\label{eq:weakdevgradcommu}
\dev \grad_w(I_{k,k}^{\div}\boldsymbol{v},\, Q_{k,\mathcal{F}_h}\boldsymbol{\mu}) 
= \dev \grad_w(\boldsymbol{v},\, \boldsymbol{\mu}),
\end{equation}
where $Q_{k,\mathcal{F}_h}$ is the $L^2$-orthogonal projector from $L^2(\mathcal{F}_h;\mathbb R^{d-1})$ onto $\Lambda_k$.
\end{lemma}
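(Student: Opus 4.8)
The plan is to unfold both sides of \eqref{eq:weakdevgradcommu} elementwise and match the three constituent terms, the point being that $I_{k,k}^{\div}$ and $Q_{k,\mathcal F_h}$ each reproduce exactly those polynomial moments that are tested against in the definition of $\dev\grad_w$. Since $\dev\grad_w(\cdot,\cdot)\in\Sigma_k^{-1}(\mathbb T)=\prod_{T}\mathbb P_k(T;\mathbb T)$ is determined block by block, and both $(I_{k,k}^{\div}\boldsymbol v,\,Q_{k,\mathcal F_h}\boldsymbol\mu)$ and $(\boldsymbol v,\boldsymbol\mu)$ lie in the domain $H^1(\mathcal T_h;\mathbb R^d)\times L^2(\mathcal F_h;\mathbb R^{d-1})$ of the extended operator, it suffices to fix $T\in\mathcal T_h$ and an arbitrary $\boldsymbol\tau\in\mathbb P_k(T;\mathbb T)$ and to verify that $-(I_{k,k}^{\div}\boldsymbol v,\div\boldsymbol\tau)_T+(\boldsymbol n\cdot I_{k,k}^{\div}\boldsymbol v,\boldsymbol n^{\intercal}\boldsymbol\tau\boldsymbol n)_{\partial T}+(Q_{k,\mathcal F_h}\boldsymbol\mu,\Pi_F\boldsymbol\tau\boldsymbol n)_{\partial T}$ coincides with the same expression carrying $\boldsymbol v$ and $\boldsymbol\mu$ in place of $I_{k,k}^{\div}\boldsymbol v$ and $Q_{k,\mathcal F_h}\boldsymbol\mu$.

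For the volume term I would use that $\div\boldsymbol\tau\in\mathbb P_{k-1}(T;\mathbb R^d)$ and that, because we employ the $\mathrm{RT}_k$ space $\mathbb V_{k,k}^{\div}$, the interior degrees of freedom in \eqref{Hdivfemdof2} span all of $\mathbb P_{k-1}(T;\mathbb R^d)$ (equivalently $\mathbb P_{k-1}(T;\mathbb R^d)=\grad\mathbb P_k(T)\oplus\{\boldsymbol q\in\mathbb P_{k-1}(T;\mathbb R^d):\boldsymbol q\cdot\boldsymbol x=0\}$). Hence $I_{(k,k),T}^{\div}$ preserves every $L^2(T)$-moment against $\mathbb P_{k-1}(T;\mathbb R^d)$, so $(I_{k,k}^{\div}\boldsymbol v,\div\boldsymbol\tau)_T=(\boldsymbol v,\div\boldsymbol\tau)_T$; this is exactly where $\mathrm{RT}_k$ rather than $\mathrm{BDM}_k$ is needed, in parallel with Lemma~\ref{lm:devinterpolant}. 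For the normal boundary term, on each $F\in\mathcal F(T)$ one has $\boldsymbol n^{\intercal}\boldsymbol\tau\boldsymbol n|_F\in\mathbb P_k(F)$, and the face moments \eqref{Hdivfemdof1} give $(I_{k,k}^{\div}\boldsymbol v\cdot\boldsymbol n_F,q)_F=(\boldsymbol v\cdot\boldsymbol n_F,q)_F$ for all $q\in\mathbb P_k(F)$; taking $q=\boldsymbol n^{\intercal}\boldsymbol\tau\boldsymbol n|_F$ and using $\boldsymbol n_{\partial T}=\pm\boldsymbol n_F$, with the sign appearing identically on both sides, matches the second terms. For the tangential boundary term, identify $\boldsymbol\mu|_F$ and the tangential field $\Pi_F\boldsymbol\tau\boldsymbol n|_F$ with $\mathbb R^{d-1}$-valued functions through the same orthonormal tangential frame $\{\boldsymbol t_1,\dots,\boldsymbol t_{d-1}\}$ used in \eqref{eq:curdivfemdof1}; then $\Pi_F\boldsymbol\tau\boldsymbol n|_F\in\mathbb P_k(F;\mathbb R^{d-1})$ lies in the range of $Q_{k,\mathcal F_h}$, so $(Q_{k,\mathcal F_h}\boldsymbol\mu,\Pi_F\boldsymbol\tau\boldsymbol n)_F=(\boldsymbol\mu,\Pi_F\boldsymbol\tau\boldsymbol n)_F$ by the defining property of the $L^2$-projection. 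Summing the three matched identities over $F\in\mathcal F(T)$ and then over $T\in\mathcal T_h$ yields \eqref{eq:weakdevgradcommu}.

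I do not expect a genuine obstacle here: the argument is a bookkeeping check of which moments each operator preserves, and the only substantive input is the span property of the $\mathrm{RT}_k$ interior degrees of freedom, already recorded alongside \eqref{eq:HdivDoF}. The mildly delicate points to state carefully are (i) that $\div\boldsymbol\tau$ really has degree $\le k-1$ (so the $\mathrm{RT}_k$, not $\mathrm{BDM}_k$, interior DoFs are the ones required) while $\boldsymbol n^{\intercal}\boldsymbol\tau\boldsymbol n|_F$ and $\Pi_F\boldsymbol\tau\boldsymbol n|_F$ have degree $\le k$ on each face; and (ii) that the frame identification of $\mathbb R^{d-1}$-valued multipliers with tangential vector fields is the same one appearing in the stress DoFs, so that the assertion $\Pi_F\boldsymbol\tau\boldsymbol n|_F\in\Lambda_k|_F$ is literally meaningful. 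For $k=0$ the volume term is vacuous since $\div\boldsymbol\tau=0$, and the identity reduces to the two boundary matchings.
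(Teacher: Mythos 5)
Your proposal is correct and follows essentially the same route as the paper: expand $\dev\grad_w$ elementwise against $\boldsymbol{\tau}\in\mathbb P_k(T;\mathbb T)$ and use that the $\mathrm{RT}_k$ interior and face moments, together with the $L^2$-projection property of $Q_{k,\mathcal F_h}$, preserve exactly the moments against $\div\boldsymbol\tau\in\mathbb P_{k-1}(T;\mathbb R^d)$, $\boldsymbol n^{\intercal}\boldsymbol\tau\boldsymbol n\in\mathbb P_k(F)$, and $\Pi_F\boldsymbol\tau\boldsymbol n\in\mathbb P_k(F;\mathbb R^{d-1})$. The paper states this in two lines; your write-up simply makes the same bookkeeping explicit, including the correct observation that the $\mathrm{RT}_k$ (rather than $\mathrm{BDM}_k$) interior degrees of freedom are what is needed for the volume term.
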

\begin{proof}
By the definition of the operator $\dev \grad_w$,	
we have for any $\boldsymbol{\tau}\in \mathbb{P}_{k}(T;\mathbb{T})$ and $T\in\mathcal{T}_h$ that
\begin{align*}
&\quad\; (\dev \grad_w(I_{k,k}^{\div}\boldsymbol{v}, Q_{k,\mathcal{F}_h}\boldsymbol{\mu}), \boldsymbol{\tau})_T \\
&=-(I_{k,k}^{\div}\boldsymbol{v},\div\boldsymbol{\tau})_T+(\boldsymbol{n}\cdot(I_{k,k}^{\div}\boldsymbol{v}),\boldsymbol{n}^{\intercal}\boldsymbol{\tau}\boldsymbol{n})_{\partial T}+(\boldsymbol{\mu}, \Pi_F\boldsymbol{\tau}\boldsymbol{n})_{\partial T}.
\end{align*}
Then \eqref{eq:weakdevgradcommu} follows from the definition of the interpolation operator $I_{k,k}^{\div}$.
\end{proof}

Again, \eqref{eq:weakdevgradcommu} may not hold for $I_{k,k-1}^{\div}$, because to match the term $(I_{k,k}^{\div}\boldsymbol{v},\div\boldsymbol{\tau})_T$ requires the interior moments of the full space $\mathbb P_{k-1}(T; \mathbb R^d)$. This property holds for RT$_k$, but not for BDM$_k$.

\subsection{Weak div stability}
We first establish the following inf-sup condition.
\begin{theorem}\label{thm:infsupcurldiv}
There exists $\alpha>0$, independent of $h$, such that
\begin{equation}\label{eq:infsupcurldiv}
\inf_{(\boldsymbol{v}_h,\boldsymbol{\mu}_h)\in \mathring{\mathbb V}_{k,k-1}^{\div}\times \mathring{\Lambda}_k} 
\sup_{\boldsymbol{\tau}_h\in \Sigma_{k}^{-1}(\mathbb T)} 
\frac{(\div_w \boldsymbol{\tau}_h, (\boldsymbol{v}_h, \boldsymbol{\mu}_h))_{0,h}}
{\|\boldsymbol{\tau}_h\|_{\div_w}\,\|(\boldsymbol{v}_h, \boldsymbol{\mu}_h)\|_{0,h}} 
= \alpha.
\end{equation} 
\end{theorem}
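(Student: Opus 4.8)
The plan is to establish~\eqref{eq:infsupcurldiv} by a Fortin-type argument, mimicking the scalar case of Lemma~\ref{lem:divwinfsup} and adding one local ingredient to accommodate the new multiplier $\boldsymbol{\mu}_h$. Given $(\boldsymbol{v}_h,\boldsymbol{\mu}_h)\in\mathring{\mathbb V}_{k,k-1}^{\div}\times\mathring{\Lambda}_k$, it suffices to exhibit a test tensor $\boldsymbol{\tau}_h=\boldsymbol{\tau}_h^{\flat}+\gamma\,\boldsymbol{\tau}_h^{\sharp}\in\Sigma_k^{-1}(\mathbb T)$, with a fixed small $\gamma>0$, such that $(\div_w\boldsymbol{\tau}_h,(\boldsymbol{v}_h,\boldsymbol{\mu}_h))_{0,h}\gtrsim\|(\boldsymbol{v}_h,\boldsymbol{\mu}_h)\|_{0,h}^2$ and $\|\boldsymbol{\tau}_h\|_{\div_w}\lesssim\|(\boldsymbol{v}_h,\boldsymbol{\mu}_h)\|_{0,h}$. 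Throughout I shall use the norm equivalence
\[
\|(\boldsymbol{v}_h,\boldsymbol{\mu}_h)\|_{0,h}^2\lesssim\|\boldsymbol{v}_h\|^2+\textstyle\sum_{F\in\mathring{\mathcal F}_h}h_F\|\boldsymbol{\mu}_h\|_F^2\lesssim\|(\boldsymbol{v}_h,\boldsymbol{\mu}_h)\|_{0,h}^2,
\]
whose first inequality is a trace inequality, and whose second follows from the elementwise estimate $\|\boldsymbol{v}_h\|_T^2\lesssim\|Q_{k-1,T}\boldsymbol{v}_h\|_T^2+h_T\|\boldsymbol{n}\cdot\boldsymbol{v}_h\|_{\partial T}^2$; the latter holds because all interior degrees of freedom~\eqref{Hdivfemdof2} of $\mathbb V_{k,k-1}^{\div}(T)$ are moments against polynomials of degree at most $k-1$, so $\boldsymbol{v}_h\mapsto(Q_{k-1,T}\boldsymbol{v}_h,\boldsymbol{n}\cdot\boldsymbol{v}_h|_{\partial T})$ is injective on that space, and a scaling argument applies.

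The component $\boldsymbol{\tau}_h^{\flat}$ controls the velocity. Since the inf-sup condition~\eqref{eq:infsupcurldivw} holds on $\Sigma_k^{\rm tn}\subset\Sigma_k^{-1}(\mathbb T)$ and the supremum there is attained (a finite-dimensional problem), rescaling a maximizer yields $\boldsymbol{\tau}_h^{\flat}\in\Sigma_k^{\rm tn}$ with $\|\boldsymbol{\tau}_h^{\flat}\|_{\div_w}=\|\boldsymbol{v}_h\|$ and $(\div_w\boldsymbol{\tau}_h^{\flat},\boldsymbol{v}_h)_{0,h}\ge\alpha\|\boldsymbol{v}_h\|^2$. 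The crucial point is that $\boldsymbol{\tau}_h^{\flat}$ is tangential--normal continuous, i.e.\ $\llbracket\Pi_F\boldsymbol{\tau}_h^{\flat}\boldsymbol{n}\rrbracket=0$ on every interior face, so the last term in~\eqref{eq:divwT} vanishes and $\boldsymbol{\mu}_h$ does not interact with $\boldsymbol{\tau}_h^{\flat}$: $(\div_w\boldsymbol{\tau}_h^{\flat},(\boldsymbol{v}_h,\boldsymbol{\mu}_h))_{0,h}=(\div_w\boldsymbol{\tau}_h^{\flat},\boldsymbol{v}_h)_{0,h}\ge\alpha\|\boldsymbol{v}_h\|^2$.

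The component $\boldsymbol{\tau}_h^{\sharp}$ controls the multiplier. Using the unisolvence of the degrees of freedom~\eqref{eq:curldivdof}~\cite{ChenHuangZhang2025,GopalakrishnanLedererSchoeberl2020}, define $\boldsymbol{\tau}_h^{\sharp}\in\Sigma_k^{-1}(\mathbb T)$ elementwise: on each $T\in\mathcal T_h$ prescribe the tangential--normal face traces~\eqref{eq:curdivfemdof1} on interior faces $F\in\mathcal F(T)$ proportional to $h_F\boldsymbol{\mu}_h$, with the sign on the two sides of $F$ chosen so that $\llbracket\Pi_F\boldsymbol{\tau}_h^{\sharp}\boldsymbol{n}\rrbracket=-h_F\boldsymbol{\mu}_h$, while setting the traces on boundary faces and all interior moments~\eqref{eq:curdivfemdof2} to zero. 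A standard scaling argument (the natural scaling of a lifting from a $(d-1)$-face to the element) gives $\|\boldsymbol{\tau}_h^{\sharp}\|_T^2\lesssim h_T\sum_{F\in\mathcal F(T)}h_F^2\|\boldsymbol{\mu}_h\|_F^2$, hence, with inverse and trace inequalities, $\|\boldsymbol{\tau}_h^{\sharp}\|_{\div_w}^2\lesssim\sum_{F\in\mathring{\mathcal F}_h}h_F\|\boldsymbol{\mu}_h\|_F^2$. Expanding $(\div_w\boldsymbol{\tau}_h^{\sharp},(\boldsymbol{v}_h,\boldsymbol{\mu}_h))_{0,h}$ via~\eqref{eq:divwT}, the multiplier term equals $-\sum_F(\llbracket\Pi_F\boldsymbol{\tau}_h^{\sharp}\boldsymbol{n}\rrbracket,\boldsymbol{\mu}_h)_F=\sum_F h_F\|\boldsymbol{\mu}_h\|_F^2$, while the volume term $\sum_T(\div\boldsymbol{\tau}_h^{\sharp},\boldsymbol{v}_h)_T$ and the normal jump term are, by Cauchy--Schwarz and the preceding estimates, bounded by $C\|\boldsymbol{v}_h\|\big(\sum_F h_F\|\boldsymbol{\mu}_h\|_F^2\big)^{1/2}$; thus $(\div_w\boldsymbol{\tau}_h^{\sharp},(\boldsymbol{v}_h,\boldsymbol{\mu}_h))_{0,h}\ge\sum_F h_F\|\boldsymbol{\mu}_h\|_F^2-C\|\boldsymbol{v}_h\|\big(\sum_F h_F\|\boldsymbol{\mu}_h\|_F^2\big)^{1/2}$.

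Adding the two pieces, using a Young inequality and then choosing $\gamma=\alpha/C^2$, one obtains $(\div_w\boldsymbol{\tau}_h,(\boldsymbol{v}_h,\boldsymbol{\mu}_h))_{0,h}\ge\tfrac{\alpha}{2}\|\boldsymbol{v}_h\|^2+\tfrac{\gamma}{2}\sum_F h_F\|\boldsymbol{\mu}_h\|_F^2\gtrsim\|(\boldsymbol{v}_h,\boldsymbol{\mu}_h)\|_{0,h}^2$, while $\|\boldsymbol{\tau}_h\|_{\div_w}\le\|\boldsymbol{v}_h\|+\gamma\|\boldsymbol{\tau}_h^{\sharp}\|_{\div_w}\lesssim\|(\boldsymbol{v}_h,\boldsymbol{\mu}_h)\|_{0,h}$, the last step again by the norm equivalence; this proves~\eqref{eq:infsupcurldiv}. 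The main obstacle is the construction and scaling analysis of $\boldsymbol{\tau}_h^{\sharp}$: one must verify that prescribing only the tangential--normal face traces (with zero interior moments) yields a well-defined element of $\mathbb P_k(T;\mathbb T)$ whose $L^2(T)$-norm scales like $h_T^{1/2}$ times the $L^2$-norm of the face data; given that, the remaining assembly is a routine Fortin argument, and the omitted proof of the scalar Lemma~\ref{lem:divwinfsup} is the special case without the multiplier $\boldsymbol{\mu}_h$.
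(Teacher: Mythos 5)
Your proposal is correct and follows essentially the same route as the paper: the $\boldsymbol{\mu}_h$-component is controlled by a discontinuous tensor with prescribed tangential--normal face traces (the paper's $\boldsymbol{\tau}_1$ with $\Pi_F\boldsymbol{\tau}_1\boldsymbol{n}=-\tfrac12 h_F\boldsymbol{\mu}_h$ and vanishing interior moments) while the $\boldsymbol{v}_h$-component is handled by the already-established inf-sup~\eqref{eq:infsupcurldivw} on $\Sigma_k^{\rm tn}$, whose tangential--normal continuity decouples it from $\boldsymbol{\mu}_h$. You merely make explicit the combination step (cross terms absorbed via Young's inequality with a small parameter $\gamma$) that the paper summarizes as ``combining the two bounds.''
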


\begin{proof}
Given $\boldsymbol{\mu}_h\in \mathring{\Lambda}_k$, construct $\boldsymbol{\tau}_1\in \Sigma_k^{-1}(\mathbb T)$ elementwise by
\[
\Pi_F\boldsymbol{\tau}_1\boldsymbol{n} = -\tfrac{1}{2}h_F\,\boldsymbol{\mu}_h 
\ \ \text{on } F\in\mathcal{F}(T), 
\qquad
(\boldsymbol{\tau}_1, \boldsymbol{q})_T = 0 
\ \ \forall\,\boldsymbol{q}\in \mathbb P_{k-1}(T;\mathbb T),
\]
for each $T\in\mathcal T_h$, where $\boldsymbol{n}$ is the unit outward normal on $\partial T$.  
Then $-[\![\Pi_F\boldsymbol{\tau}_1\boldsymbol{n}]\!] = h_F\boldsymbol{\mu}_h$. 
By inverse and scaling estimates,
\[
\|\boldsymbol{\tau}_1\|_{\div_w}
\lesssim \|h_F^{1/2}\boldsymbol{\mu}_h\|_{\mathcal F_h}
= \|(0, \boldsymbol{\mu}_h)\|_{0,h}.
\]
By the definition of $\div_w$,
\[
(\div_w \boldsymbol{\tau}_1, (0, \boldsymbol{\mu}_h))_{0,h} 
= - \sum_{F\in\mathring{\mathcal{F}}_h}([\![\Pi_F\boldsymbol{\tau}_1\boldsymbol{n}]\!], \boldsymbol{\mu}_h)_F
= \|(0, \boldsymbol{\mu}_h)\|_{0,h}^2,
\]
and hence
\[
\|(0, \boldsymbol{\mu}_h)\|_{0,h}
\lesssim \sup_{\boldsymbol{\tau}_h\in \Sigma_{k}^{-1}(\mathbb T)} 
\frac{(\div_w \boldsymbol{\tau}_h, (0, \boldsymbol{\mu}_h))_{0,h}}
{\|\boldsymbol{\tau}_h\|_{\div_w}} 
\qquad \forall\,\boldsymbol{\mu}_h\in \mathring{\Lambda}_k.
\]

On the other hand, by the weak divergence inf-sup condition (cf.~\eqref{eq:infsupcurldivw}),
\[
\|(\boldsymbol{v}_h, 0)\|_{0,h}=\|\boldsymbol{v}_h\|
\lesssim \sup_{\boldsymbol{\tau}_h\in \Sigma_{k}^{-1}(\mathbb T)} 
\frac{(\div_w \boldsymbol{\tau}_h, (\boldsymbol{v}_h, \boldsymbol{\mu}_h))_{0,h}}
{\|\boldsymbol{\tau}_h\|_{\div_w}} 
\qquad \forall\,\boldsymbol{v}_h\in \mathring{\mathbb V}_{k,k-1}^{\div},\ \boldsymbol{\mu}_h\in\mathring{\Lambda}_k.
\]
Combining the two bounds yields \eqref{eq:infsupcurldiv}.
\end{proof}

We use $(\cdot,\cdot)_{0,h}$ as a Riesz map to identify 
$E(\mathring{\mathbb V}_{k,\ell}^{\div} \times \mathring{\Lambda}_k)$ with its dual. 
The bilinear form \eqref{eq:divwT} thus induces a mapping 
\[
\div_w: \Sigma_k^{-1}(\mathbb T)\to \mathring{\mathbb V}_{k,\ell}^{\div} \times \mathring{\Lambda}_k.
\] 
The inf--sup condition \eqref{eq:infsupcurldiv} ensures that the weak divergence operator $\div_w$ is surjective when $\ell = k-1$. Consequently, its adjoint, the weak deviatoric gradient $\dev\grad_w$, is injective, and the mapping
$$ \| \dev\grad_w (\cdot, \cdot) \| $$
defines a norm on the product space $\mathring{\mathbb{V}}_{k,k-1}^{\div} \times \mathring{\Lambda}_k$. 

Again, it is important to note that the inf-sup condition \eqref{eq:infsupcurldiv} may not hold for the larger space $\mathring{\mathbb{V}}_{k,k}^{\div}$, even though this space is important for its approximation properties. In the subsequent analysis, the coercivity of the weak operators on the divergence-free subspaces is used to resolve this conflict.

\subsection{Mixed finite element methods}
With the operator $\dev \grad_w$, 
we formulate the following mixed finite element method for the Stokes equation~\eqref{eq:stokes}:  
Find $\boldsymbol{u}_h\in \mathring{\mathbb{V}}_{k,\ell}^{\div}$, 
$\boldsymbol{\lambda}_h\in\mathring{\Lambda}_k$, and 
$p_h\in\mathbb{P}_{\ell}(\mathcal{T}_h)/\mathbb R$, 
for integers $k \ge 0$ and $\ell\in\{k,\,k-1\}$, such that
\begin{subequations}\label{distribustokesfemWG}
\begin{align}
\label{distribustokesfemWG1}
(\dev \grad_w(\boldsymbol{u}_h,\boldsymbol{\lambda}_h),\, 
\dev \grad_w(\boldsymbol{v}_h,\boldsymbol{\mu}_h))
+ (\div\boldsymbol{v}_h,\, p_h)
&= (\boldsymbol{f},\, \boldsymbol{v}_h), \\
\label{distribustokesfemWG2}
(\div\boldsymbol{u}_h,\, q_h) &= 0,
\end{align}
\end{subequations}
for all 
$\boldsymbol{v}_h\in \mathring{\mathbb{V}}_{k,\ell}^{\div}$, 
$\boldsymbol{\mu}_h\in\mathring{\Lambda}_k$, and 
$q_h\in\mathbb{P}_{\ell}(\mathcal{T}_h)/\mathbb R$.

\begin{theorem}
The mixed finite element method~\eqref{distribustokesfemWG} is well-posed, 
with a unique solution 
$(\boldsymbol{u}_h,\, \boldsymbol{\lambda}_h,\, p_h)
\in \mathring{\mathbb{V}}_{k,\ell}^{\div}\times
\mathring{\Lambda}_k\times
\mathbb{P}_{\ell}(\mathcal{T}_h)/\mathbb R$.  
Let $\boldsymbol{\sigma}_h := \dev \grad_w(\boldsymbol{u}_h,\boldsymbol{\lambda}_h)$; 
then $\boldsymbol{\sigma}_h\in \Sigma_k^{\rm tn}$ and 
$\boldsymbol{u}_h\in \mathring{\mathbb V}_{k,\ell}^{\div}\cap \ker(\div)$ 
satisfy the mixed formulation~\eqref{eq:vecLapdivfree}.
\end{theorem}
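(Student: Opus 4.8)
The plan is to derive well-posedness from a uniqueness argument---because the trial and test spaces in \eqref{distribustokesfemWG} coincide, the discrete problem is a square linear system, so existence follows once uniqueness is established---and then to read off the remaining assertions by feeding suitable test functions into \eqref{distribustokesfemWG}. For uniqueness, I would set $\boldsymbol{f}=0$ and take $(\boldsymbol{v}_h,\boldsymbol{\mu}_h,q_h)=(\boldsymbol{u}_h,\boldsymbol{\lambda}_h,p_h)$ in \eqref{distribustokesfemWG1}--\eqref{distribustokesfemWG2}; combining the two equations eliminates the pressure and divergence terms and leaves $\|\dev\grad_w(\boldsymbol{u}_h,\boldsymbol{\lambda}_h)\|^2=0$. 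Since $\div\mathring{\mathbb V}_{k,\ell}^{\div}=\mathbb P_\ell(\mathcal T_h)/\mathbb R$, we have $\div\boldsymbol{u}_h\in\mathbb P_\ell(\mathcal T_h)/\mathbb R$, so the choice $q_h=\div\boldsymbol{u}_h$ in \eqref{distribustokesfemWG2} forces $\div\boldsymbol{u}_h=0$; hence $(\boldsymbol{u}_h,\boldsymbol{\lambda}_h)\in(\mathring{\mathbb V}_{k,\ell}^{\div}\cap\ker(\div))\times\mathring{\Lambda}_k$. Using $\mathring{\mathbb V}_{k,k}^{\div}\cap\ker(\div)=\mathring{\mathbb V}_{k,k-1}^{\div}\cap\ker(\div)$ to absorb the case $\ell=k$, this pair actually lies in $\mathring{\mathbb V}_{k,k-1}^{\div}\times\mathring{\Lambda}_k$, on which $\|\dev\grad_w(\cdot,\cdot)\|$ is a norm by the surjectivity of $\div_w$ obtained from \eqref{eq:infsupcurldiv}; therefore $\boldsymbol{u}_h=0$ and $\boldsymbol{\lambda}_h=0$. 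Equation \eqref{distribustokesfemWG1} then collapses to $(\div\boldsymbol{v}_h,p_h)=0$ for all $\boldsymbol{v}_h$, and choosing $\boldsymbol{v}_h$ with $\div\boldsymbol{v}_h=p_h$ gives $p_h=0$. (One may equally invoke the Brezzi conditions: the inf-sup for $(\div\boldsymbol{v}_h,q_h)$ follows from $\div\mathring{\mathbb V}_{k,\ell}^{\div}=\mathbb P_\ell(\mathcal T_h)/\mathbb R$ with $\boldsymbol{\mu}_h=0$, while coercivity of $(\dev\grad_w(\cdot,\cdot),\dev\grad_w(\cdot,\cdot))$ on $\ker b=(\mathring{\mathbb V}_{k,\ell}^{\div}\cap\ker(\div))\times\mathring{\Lambda}_k\subset\mathring{\mathbb V}_{k,k-1}^{\div}\times\mathring{\Lambda}_k$ is the norm property just used.)

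Next I would set $\boldsymbol{\sigma}_h:=\dev\grad_w(\boldsymbol{u}_h,\boldsymbol{\lambda}_h)$ and recover its tangential--normal continuity. A priori $\boldsymbol{\sigma}_h$ lies only in $\Sigma_k^{-1}(\mathbb T)$, but testing \eqref{distribustokesfemWG1} with $(\boldsymbol{v}_h,\boldsymbol{\mu}_h)=(0,\boldsymbol{\mu}_h)$ gives $(\boldsymbol{\sigma}_h,\dev\grad_w(0,\boldsymbol{\mu}_h))=0$ for all $\boldsymbol{\mu}_h\in\mathring{\Lambda}_k$, and by the elementwise definition of $\dev\grad_w$ the left side equals $\sum_{F\in\mathring{\mathcal F}_h}(\boldsymbol{\mu}_h,[\Pi_F\boldsymbol{\sigma}_h\boldsymbol{n}])_F$; since $[\Pi_F\boldsymbol{\sigma}_h\boldsymbol{n}]\in\mathbb P_k(F;\mathbb R^{d-1})$ on every interior face, taking $\boldsymbol{\mu}_h=[\Pi_F\boldsymbol{\sigma}_h\boldsymbol{n}]$ forces $[\Pi_F\boldsymbol{\sigma}_h\boldsymbol{n}]=0$, i.e.\ $\boldsymbol{\sigma}_h\in\Sigma_k^{\rm tn}$. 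The relation $\boldsymbol{u}_h\in\mathring{\mathbb V}_{k,\ell}^{\div}\cap\ker(\div)$ is the same computation with $q_h=\div\boldsymbol{u}_h$ in \eqref{distribustokesfemWG2} as above. To finish I would check \eqref{eq:vecLapdivfree1}--\eqref{eq:vecLapdivfree2}: for $\boldsymbol{\tau}_h\in\Sigma_k^{\rm tn}\subset\Sigma_k^{-1}(\mathbb T)$ the adjoint relation defining $\dev\grad_w$ gives $(\boldsymbol{\tau}_h,\boldsymbol{\sigma}_h)=-(\div_w\boldsymbol{\tau}_h,(\boldsymbol{u}_h,\boldsymbol{\lambda}_h))_{0,h}$, and since $[\Pi_F\boldsymbol{\tau}_h\boldsymbol{n}]=0$ the $\boldsymbol{\lambda}_h$-term in \eqref{eq:divwT} drops out, leaving $-(\div_w\boldsymbol{\tau}_h,\boldsymbol{u}_h)_{0,h}$, which is precisely \eqref{eq:vecLapdivfree1}; and testing \eqref{distribustokesfemWG1} with $(\boldsymbol{v}_h,0)$ for $\boldsymbol{v}_h\in\mathring{\mathbb V}_{k,\ell}^{\div}\cap\ker(\div)$ kills the pressure term $(\div\boldsymbol{v}_h,p_h)$ and, using $(\boldsymbol{\sigma}_h,\dev\grad_w(\boldsymbol{v}_h,0))=-(\div_w\boldsymbol{\sigma}_h,\boldsymbol{v}_h)_{0,h}$ (again via the adjoint relation, with the face slot empty and $\boldsymbol{\sigma}_h\in\Sigma_k^{\rm tn}$), yields $(\div_w\boldsymbol{\sigma}_h,\boldsymbol{v}_h)_{0,h}=-(\boldsymbol{f},\boldsymbol{v}_h)$, which is \eqref{eq:vecLapdivfree2}.

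The routine work is the integration-by-parts bookkeeping that matches the elementwise and adjoint forms of $\dev\grad_w$ and $\div_w$---including the harmless vanishing of boundary-face contributions, since $\boldsymbol{\lambda}_h,\boldsymbol{\mu}_h\in\mathring{\Lambda}_k$ and $\boldsymbol{u}_h,\boldsymbol{v}_h\in H_0(\div)$---together with the sign and orientation conventions in the jump terms. The only genuinely structural ingredient, and hence the main obstacle, is the norm property of $\|\dev\grad_w(\cdot,\cdot)\|$ on $\mathring{\mathbb V}_{k,k-1}^{\div}\times\mathring{\Lambda}_k$ and the coincidence $\mathring{\mathbb V}_{k,k}^{\div}\cap\ker(\div)=\mathring{\mathbb V}_{k,k-1}^{\div}\cap\ker(\div)$, which together let the coercivity argument survive the enriched $\mathrm{RT}_k$ velocity space when $\ell=k$; both are furnished by the preceding subsections, so the proof needs no new estimate.
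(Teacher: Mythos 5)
Your proposal is correct and follows essentially the same route as the paper: uniqueness of the square linear system via the test choice $(\boldsymbol{v}_h,\boldsymbol{\mu}_h,q_h)=(\boldsymbol{u}_h,\boldsymbol{\lambda}_h,p_h)$, the identity $\mathring{\mathbb V}_{k,k}^{\div}\cap\ker(\div)=\mathring{\mathbb V}_{k,k-1}^{\div}\cap\ker(\div)$ combined with the injectivity of $\dev\grad_w$ from \eqref{eq:infsupcurldiv}, surjectivity of $\div$ for the pressure, and then testing with $(0,\boldsymbol{\mu}_h)$ to recover $\boldsymbol{\sigma}_h\in\Sigma_k^{\rm tn}$ and with divergence-free $(\boldsymbol{v}_h,0)$ to recover \eqref{eq:vecLapdivfree}. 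The only differences are cosmetic (ordering of the steps and a more explicit jump computation for the tangential-normal continuity of $\boldsymbol{\sigma}_h$).
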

%
\begin{proof}
We first show that the mixed method \eqref{distribustokesfemWG} admits only the zero solution when $\boldsymbol{f}=0$. From \eqref{distribustokesfemWG2}, it follows that $\boldsymbol{u}_h\in \mathring{\mathbb V}_{k,\ell}^{\div}\cap \ker(\div)
= \mathring{\mathbb V}_{k,k-1}^{\div}\cap \ker(\div)$.
Taking $\boldsymbol{v}_h=\boldsymbol{u}_h$ and $\boldsymbol{\mu}_h=\boldsymbol{\lambda}_h$ in \eqref{distribustokesfemWG1}, we obtain $\dev \grad_w(\boldsymbol{u}_h,\boldsymbol{\lambda}_h)=0$.
Combining this with \eqref{eq:infsupcurldiv} gives $\boldsymbol{u}_h=0$ and $\boldsymbol{\lambda}_h=0$. 
Furthermore, by $\div\mathring{\mathbb{V}}_{k,\ell}^{\div}=\mathbb{P}_{\ell}(\mathcal{T}_h)/\mathbb R$, equation \eqref{distribustokesfemWG1} implies $p_h=0$. Therefore, the mixed finite element method \eqref{distribustokesfemWG} is well-posed.

Since $\div\mathring{\mathbb{V}}_{k,\ell}^{\div}=\mathbb{P}_{\ell}(\mathcal{T}_h)/\mathbb R$, 
equation~\eqref{distribustokesfemWG2} implies that 
$\boldsymbol{u}_h\in \mathring{\mathbb{V}}_{k,\ell}^{\div}\cap \ker(\div)$.  
Restricting $\boldsymbol{v}_h$ to this subspace, 
equation~\eqref{distribustokesfemWG1} becomes
\begin{equation}\label{eq:20251009}
(\boldsymbol{\sigma}_h,\, \dev \grad_w(\boldsymbol{v}_h,\boldsymbol{\mu}_h))
= (\boldsymbol{f},\, \boldsymbol{v}_h),
\quad\forall\,
\boldsymbol{v}_h\in \mathring{\mathbb{V}}_{k,\ell}^{\div}\cap \ker(\div),
\ \boldsymbol{\mu}_h\in\mathring{\Lambda}_k.
\end{equation}
Taking $\boldsymbol{v}_h=0$ in~\eqref{eq:20251009} yields 
$\boldsymbol{\sigma}_h\in \Sigma_k^{\rm tn}$.  
Then~\eqref{eq:20251009} reduces to equation \eqref{eq:vecLapdivfree2}, 
while equation \eqref{eq:vecLapdivfree1} follows directly from the definition of $\boldsymbol{\sigma}_h$.
\end{proof}

We circumvent the inf-sup condition, as the error analysis will be derived through the equivalence to~\eqref{eq:vecLapdivfree} without relying on the inf-sup constant.
\subsection{Error analysis}
Hereafter, let $\boldsymbol{u} \in H_0^1(\Omega;\mathbb{R}^d)$ denote the solution of the Stokes equation \eqref{eq:stokes}, and set $\boldsymbol{\sigma} = \grad\boldsymbol{u} = \dev\grad\boldsymbol{u}$. 
Then the first equation in \eqref{eq:stokes} becomes
\begin{equation}\label{stokes1}
-\div\boldsymbol{\sigma}- \nabla p=\boldsymbol{f}  \quad \text { in } \Omega.
\end{equation}
Moreover, let $\boldsymbol{\lambda} \in L^2(\mathcal{F}_h; \mathbb{R}^{d-1})$ be defined facewise by
\[
\boldsymbol{\lambda}|_F = \Pi_F \boldsymbol{u}, \quad \forall\,F \in \mathcal{F}_h.
\]
This notation will be used throughout the subsequent analysis.

\begin{theorem}
Let $(\boldsymbol{u}, p)$ denote the solution of the Stokes equation \eqref{eq:stokes}, and let $(\boldsymbol{u}_h$, $\boldsymbol{\lambda}_h$, $p_h)$ be the solution of the mixed finite element method~\eqref{distribustokesfemWG}.
Assume $\boldsymbol{u}\in H^{k+2}(\Omega;\mathbb R^d)$. Then
\begin{align}
\label{eq:errorestimate1}
\|\boldsymbol{\sigma} - \boldsymbol{\sigma}_h\|_{0,h} + \| \dev\grad_w(I_{k,k}^{\div}\boldsymbol{u} - \boldsymbol{u}_h,Q_{k,\mathcal{F}_h}\boldsymbol{\lambda}-\boldsymbol{\lambda}_h)\| & \\
\notag
 + \|Q_{\ell} p - p_h\| &\lesssim h^{k+1} |\boldsymbol{u}|_{k+2},
 \end{align}
where $\|\boldsymbol{\tau}\|_{0,h}^2:=\|\boldsymbol{\tau}\|^2+\sum_{F\in\mathcal{F}_h}h_F\|\Pi_F\boldsymbol{\tau}\boldsymbol{n}\|_F^2$.
\end{theorem}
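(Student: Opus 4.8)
The plan is to handle the three error quantities separately: the stress error and the weak-gradient velocity error reduce directly to the estimates already proved for the divergence-free vector Laplacian, while the pressure error is obtained by a Fortin-type argument that crucially exploits the superconvergence of the stress. For the first two terms I would begin from the observation that the Stokes velocity $\boldsymbol u$ is exactly the solution of the divergence-free vector Laplacian \eqref{eq:vectorLapdivfree}: testing the momentum equation against divergence-free fields annihilates $\nabla p$, and $\grad=\dev\grad$ on $\ker(\div)$, so the variational form of \eqref{eq:vectorLapdivfree} coincides with that of \eqref{stokes}. By the preceding theorem, $\boldsymbol\sigma_h=\dev\grad_w(\boldsymbol u_h,\boldsymbol\lambda_h)\in\Sigma_k^{\rm tn}$ and $\boldsymbol u_h\in\mathring{\mathbb V}_{k,\ell}^{\div}\cap\ker(\div)$ solve \eqref{eq:vecLapdivfree}, so Theorem~\ref{thm:divfreemixfem}, together with \eqref{eq:Ihtnprop2} and $|\boldsymbol\sigma|_{k+1}\lesssim|\boldsymbol u|_{k+2}$, gives $\|\boldsymbol\sigma-\boldsymbol\sigma_h\|\le\|\boldsymbol\sigma-I_k^{\rm tn}\boldsymbol\sigma\|\lesssim h^{k+1}|\boldsymbol u|_{k+2}$. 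The face part of the $\|\cdot\|_{0,h}$-norm is then recovered by splitting $\boldsymbol\sigma-\boldsymbol\sigma_h=(\boldsymbol\sigma-I_k^{\rm tn}\boldsymbol\sigma)+(I_k^{\rm tn}\boldsymbol\sigma-\boldsymbol\sigma_h)$ and applying the trace inequality to the first (smooth) summand and the inverse trace inequality to the second (polynomial) summand, both contributions being $\lesssim h^{2k+2}|\boldsymbol u|_{k+2}^2$. For the second quantity, Lemma~\ref{lm:weakdevgrad} and a one-line integration by parts (using $\boldsymbol\lambda|_F=\Pi_F\boldsymbol u$) identify $\dev\grad_w(I_{k,k}^{\div}\boldsymbol u,Q_{k,\mathcal F_h}\boldsymbol\lambda)=\dev\grad_w(\boldsymbol u,\boldsymbol\lambda)=Q_k\boldsymbol\sigma$, the elementwise $L^2$-projection of $\boldsymbol\sigma$ onto $\Sigma_k^{-1}(\mathbb T)$; since $\dev\grad_w(\boldsymbol u_h,\boldsymbol\lambda_h)=\boldsymbol\sigma_h\in\Sigma_k^{-1}(\mathbb T)$, linearity gives $\dev\grad_w(I_{k,k}^{\div}\boldsymbol u-\boldsymbol u_h,Q_{k,\mathcal F_h}\boldsymbol\lambda-\boldsymbol\lambda_h)=Q_k(\boldsymbol\sigma-\boldsymbol\sigma_h)$, whose norm is $\le\|\boldsymbol\sigma-\boldsymbol\sigma_h\|\lesssim h^{k+1}|\boldsymbol u|_{k+2}$.

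For the pressure, set $q_h:=Q_\ell p-p_h$ and $\boldsymbol e_h:=\dev\grad_w(I_{k,k}^{\div}\boldsymbol u-\boldsymbol u_h,Q_{k,\mathcal F_h}\boldsymbol\lambda-\boldsymbol\lambda_h)=Q_k(\boldsymbol\sigma-\boldsymbol\sigma_h)$. Using the continuous inf-sup for $\div:H_0^1(\Omega;\mathbb R^d)\to L_0^2(\Omega)$, pick $\boldsymbol w\in H_0^1(\Omega;\mathbb R^d)$ with $\div\boldsymbol w=q_h$ and $\|\boldsymbol w\|_1\lesssim\|q_h\|$, and set $\boldsymbol w_h:=I_{k,\ell}^{\div}\boldsymbol w\in\mathring{\mathbb V}_{k,\ell}^{\div}$ and $\boldsymbol\mu_h:=Q_{k,\mathring{\mathcal F}_h}(\Pi_F\boldsymbol w)\in\mathring\Lambda_k$. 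Then $\div\boldsymbol w_h=q_h$ by \eqref{eq:Ihdivprop1}, while $|\boldsymbol w_h|_{1,h}\lesssim\|q_h\|$ and $\|\boldsymbol w-\boldsymbol w_h\|_{\partial T}\lesssim h_T^{1/2}|\boldsymbol w|_{1,T}$ follow from \eqref{eq:Ihdivprop2} with $m=1$ and a trace estimate. A short computation—using $\dev\grad_w(\boldsymbol u,\boldsymbol\lambda)=Q_k\boldsymbol\sigma$, orthogonality of $\boldsymbol\sigma-Q_k\boldsymbol\sigma$ to $\Sigma_k^{-1}(\mathbb T)$, elementwise integration by parts (the normal–normal face contributions cancel because $\boldsymbol\sigma$ and $\boldsymbol v_h\cdot\boldsymbol n$ are single-valued), and \eqref{stokes1}—shows that for all $(\boldsymbol v_h,\boldsymbol\mu_h)$,
\[
(\dev\grad_w(\boldsymbol u,\boldsymbol\lambda),\dev\grad_w(\boldsymbol v_h,\boldsymbol\mu_h))+(\div\boldsymbol v_h,p)=(\boldsymbol f,\boldsymbol v_h)+\sum_{T\in\mathcal T_h}\big(\boldsymbol\mu_h-\Pi_F\boldsymbol v_h,\ \Pi_F(Q_k\boldsymbol\sigma-\boldsymbol\sigma)\boldsymbol n\big)_{\partial T}.
\]
Subtracting this from \eqref{distribustokesfemWG1} with $\boldsymbol v_h=\boldsymbol w_h$ and $\boldsymbol\mu_h$ as above, and using $(\div\boldsymbol w_h,p_h-p)=-\|q_h\|^2$, yields
\[
\|q_h\|^2=-\big(\boldsymbol e_h,\ \dev\grad_w(\boldsymbol w_h,\boldsymbol\mu_h)\big)+\sum_{T\in\mathcal T_h}\big(\boldsymbol\mu_h-\Pi_F\boldsymbol w_h,\ \Pi_F(Q_k\boldsymbol\sigma-\boldsymbol\sigma)\boldsymbol n\big)_{\partial T}=:\mathrm I+\mathrm{II}.
\]

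It then remains to bound $\mathrm I$ and $\mathrm{II}$ by $h^{k+1}|\boldsymbol u|_{k+2}\|q_h\|$. For $\mathrm I$, integrating by parts elementwise in $\dev\grad_w(\boldsymbol w_h,\boldsymbol\mu_h)$ tested against the polynomial $\boldsymbol e_h$ gives $(\boldsymbol e_h,\dev\grad_w(\boldsymbol w_h,\boldsymbol\mu_h))=(\dev\grad_h\boldsymbol w_h,\boldsymbol e_h)+\sum_T(\boldsymbol\mu_h-\Pi_F\boldsymbol w_h,\Pi_F\boldsymbol e_h\boldsymbol n)_{\partial T}$; the first term is $\le|\boldsymbol w_h|_{1,h}\|\boldsymbol e_h\|\lesssim\|q_h\|\,h^{k+1}|\boldsymbol u|_{k+2}$, and in the face term $\boldsymbol\mu_h-\Pi_F\boldsymbol w=Q_{k,F}\Pi_F\boldsymbol w-\Pi_F\boldsymbol w$ is $L^2(F)$-orthogonal to $\mathbb P_k(F;\mathbb R^{d-1})\ni\Pi_F\boldsymbol e_h\boldsymbol n$, so only the $\Pi_F(\boldsymbol w-\boldsymbol w_h)$ part survives and is controlled by the inverse trace inequality on $\boldsymbol e_h$ together with $\|\boldsymbol w-\boldsymbol w_h\|_{\partial T}\lesssim h_T^{1/2}|\boldsymbol w|_{1,T}$. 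For $\mathrm{II}$, split $\boldsymbol\mu_h-\Pi_F\boldsymbol w_h=(Q_{k,F}\Pi_F\boldsymbol w-\Pi_F\boldsymbol w)+\Pi_F(\boldsymbol w-\boldsymbol w_h)$: the second summand paired with $\Pi_F(Q_k\boldsymbol\sigma-\boldsymbol\sigma)\boldsymbol n$, of size $\lesssim h_T^{k+1/2}|\boldsymbol\sigma|_{k+1,T}$ by the trace inequality and $L^2$-projection estimates, sums to $h^{k+1}|\boldsymbol u|_{k+2}\|q_h\|$; for the first summand, orthogonality to $\mathbb P_k(F;\mathbb R^{d-1})\ni\Pi_F(Q_k\boldsymbol\sigma)\boldsymbol n$ reduces the pairing to $-(Q_{k,F}\Pi_F\boldsymbol w-\Pi_F\boldsymbol w,\Pi_F\boldsymbol\sigma\boldsymbol n-\boldsymbol r_F)_F$ with $\boldsymbol r_F$ the trace of a degree-$k$ approximation of $\boldsymbol\sigma$, and a Bramble–Hilbert bound $\|Q_{k,F}\Pi_F\boldsymbol w-\Pi_F\boldsymbol w\|_F\lesssim h_T^{1/2}|\boldsymbol w|_{1,T}$ with $\|\Pi_F\boldsymbol\sigma\boldsymbol n-\boldsymbol r_F\|_F\lesssim h_T^{k+1/2}|\boldsymbol\sigma|_{k+1,T}$ again gives $h^{k+1}|\boldsymbol u|_{k+2}\|q_h\|$. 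Dividing through by $\|q_h\|$ closes the argument.

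The delicate point is the pressure estimate. A naive Fortin velocity would only satisfy $|\boldsymbol w_h|_{1,h}\lesssim h^{-1}\|q_h\|$, which costs a power of $h$ against the $O(h^{k+1})$ stress error, and the consistency residual produced with $\boldsymbol\mu_h=0$ would be only $O(h^k)$; both losses are precisely avoided by choosing $\boldsymbol w_h$ to be the $H(\div)$-interpolant of a continuous $H^1$ field and the multiplier test function $\boldsymbol\mu_h$ to be the facewise $L^2$-projection of its tangential trace, which turns $\mathrm I$ and $\mathrm{II}$ into genuine $O(h^{k+1})$ quantities, and the superconvergence $\|\boldsymbol\sigma-\boldsymbol\sigma_h\|\lesssim h^{k+1}|\boldsymbol u|_{k+2}$ obtained in the first step is exactly what makes $\mathrm I$ of the right order.
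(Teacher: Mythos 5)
Your proposal is correct, and for the first two terms it follows the paper exactly: invoke the equivalence with the divergence-free vector Laplacian scheme \eqref{eq:vecLapdivfree}, apply Theorem~\ref{thm:divfreemixfem}, and use Lemma~\ref{lm:weakdevgrad} to identify $\dev\grad_w(I_{k,k}^{\div}\boldsymbol u-\boldsymbol u_h,Q_{k,\mathcal F_h}\boldsymbol\lambda-\boldsymbol\lambda_h)=Q_k(\boldsymbol\sigma-\boldsymbol\sigma_h)$. (Your explicit trace/inverse-trace treatment of the face part of $\|\cdot\|_{0,h}$ is actually more careful than the paper, which leaves that step implicit.) The pressure estimate is where you diverge: the paper bounds $(\div(I_{k,\ell}^{\div}\boldsymbol v),p-p_h)$ for an \emph{arbitrary} $\boldsymbol v\in H_0^1(\Omega;\mathbb R^d)$, tests \eqref{distribustokesfemWG1} with the pair $(I_{k,\ell}^{\div}\boldsymbol v,\boldsymbol 0)$, and compares $\boldsymbol\sigma_h$ with the tangential--normal interpolant $I_k^{\rm tn}\boldsymbol\sigma$, exploiting the commutation \eqref{eq:divinterpolant} so that the consistency error is expressed through $\boldsymbol\sigma-I_k^{\rm tn}\boldsymbol\sigma$ in both volume and face terms; it then concludes by the continuous inf-sup, exactly as you do. You instead fix a specific Fortin field $\boldsymbol w$ with $\div\boldsymbol w=Q_\ell p-p_h$, take the nonzero multiplier $\boldsymbol\mu_h=Q_{k,\mathring{\mathcal F}_h}(\Pi_F\boldsymbol w)$, and compare $\boldsymbol\sigma_h$ with $Q_k\boldsymbol\sigma$, so the residual lives on faces as $\Pi_F(Q_k\boldsymbol\sigma-\boldsymbol\sigma)\boldsymbol n$ and is killed by the $L^2(F)$-orthogonality of $Q_{k,F}\Pi_F\boldsymbol w-\Pi_F\boldsymbol w$. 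Both routes are valid and give the same rate; the paper's choice $\boldsymbol\mu_h=\boldsymbol 0$ together with $I_k^{\rm tn}$ makes the bookkeeping shorter, while your version is more self-contained (it never needs the interpolant $I_k^{\rm tn}$ in this step) at the cost of the extra orthogonality/splitting arguments for the terms $\mathrm I$ and $\mathrm{II}$.
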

\begin{proof}
By the equivalence of \eqref{distribustokesfemWG} and \eqref{eq:vecLapdivfree}, the estimate of $\|\boldsymbol{\sigma} - \boldsymbol{\sigma}_h\|_{0,h}$ follows from \eqref{eq:uestimate} in Theorem \ref{thm:divfreemixfem}. 

Then by Lemma \ref{lm:weakdevgrad},
\[
\begin{aligned}
\dev\grad_w(I_{k,k}^{\div}\boldsymbol{u},Q_{k,\mathcal{F}_h}\boldsymbol{\lambda}) &= Q_k\dev\grad \bs u = Q_k \bs \sigma,\\
\dev\grad_w(\bs u_h, \bs \lambda_h) &= \bs \sigma_h.
\end{aligned}
\]
So 
\[
\| \dev\grad_w(I_{k,k}^{\div}\boldsymbol{u} - \boldsymbol{u}_h,Q_{k,\mathcal{F}_h}\boldsymbol{\lambda}-\boldsymbol{\lambda}_h)\| = \|Q_k\bs \sigma - \bs \sigma_h\|\leq \|\bs \sigma - \bs \sigma_h\|. 
\]

We now estimate the error of the pressure approximation. For $\boldsymbol{v}\in H_0^1(\Omega;\mathbb R^d)$, by \eqref{stokes1},
\begin{align*}
(\div(I_{k,\ell}^{\div}\boldsymbol{v}),p-p_h) &= (\div(I_{k,\ell}^{\div}\boldsymbol{v}),p)-(\boldsymbol{f}, I_{k,\ell}^{\div}\boldsymbol{v}) + (\boldsymbol{\sigma}_h, \dev\grad_w(I_{k,\ell}^{\div}\boldsymbol{v},0)) \\
&= (\div\boldsymbol{\sigma}, I_{k,\ell}^{\div}\boldsymbol{v}) + (\boldsymbol{\sigma}_h, \dev\grad_w(I_{k,\ell}^{\div}\boldsymbol{v},0)) \\
&= (I_k^{\rm tn}\bs \sigma-\boldsymbol{\sigma}, \grad_h(I_{k,\ell}^{\div}\boldsymbol{v})) \!- \!\!\!\sum_{T\in\mathcal{T}_h}(\Pi_F(I_k^{\rm tn}\bs \sigma-\boldsymbol{\sigma})\boldsymbol{n},\Pi_F(I_{k,\ell}^{\div}\boldsymbol{v}))_{\partial T}\\
&\quad +(\boldsymbol{\sigma}_h-I_k^{\rm tn}\bs \sigma, \dev\grad_w(I_{k,\ell}^{\div}\boldsymbol{v},0)).
\end{align*}
Then we get from \eqref{eq:Ihtnprop2} and \eqref{eq:Ihdivprop2} that
\[
(\div(I_{k,\ell}^{\div}\boldsymbol{v}),p-p_h)\lesssim h^{k+1} |\boldsymbol{u}|_{k+2}|\boldsymbol{v}|_1.
\]
Finally, by the inf-sup condition for $\div$ operator, we have
\[
\|Q_{\ell} p - p_h\|\lesssim \sup_{\boldsymbol{v}\in H_0^1(\Omega;\mathbb R^d)}\frac{(\div(I_{k,\ell}^{\div}\boldsymbol{v}),p-p_h)}{|\boldsymbol{v}|_1} \lesssim h^{k+1} |\boldsymbol{u}|_{k+2}.
\]
\end{proof}

The estimate \eqref{eq:errorestimate1} demonstrates that the mixed finite element method \eqref{distribustokesfemWG} is pressure-robust as it depends only on the regularity of $\bs u$ not $p$.


\subsection{$L^2$ error estimate}
The superconvergence of $\|I_{k,k}^{\div}\boldsymbol{u}-\boldsymbol{u}_h\|$ can be obtained through the duality argument.
Introduce the dual problem: Find $\tilde{\bs u}\in H_0^1(\Omega;\mathbb R^d)$ and $\tilde p\in L^2(\Omega)/\mathbb R$ satisfy 
\begin{equation}\label{dualstokes}
\begin{cases}
- \Delta \tilde{\bs u} + \nabla \tilde{p}= I_{k,k}^{\div}\boldsymbol{u}-\boldsymbol{u}_h& \text { in } \Omega, \\
\qquad\;\;\div\tilde{\boldsymbol{u}} =0 & \text { in } \Omega.
\end{cases}
\end{equation}
We assume that the dual problem \eqref{dualstokes} has
the $H^2$-regularity 
\begin{equation}\label{eq:regularity}
\|\tilde{\boldsymbol{u}}\|_2 \lesssim \|I_{k,k}^{\div}\boldsymbol{u}-\boldsymbol{u}_h\|.
\end{equation}
We refer to \cite[Remark I.5.6]{GiraultRaviart1986} and \cite[Section 11.5]{MazyaRossmann2010} for the $H^2$-regularity \eqref{eq:regularity} in convex domains.

\begin{lemma}
Let $(\boldsymbol{u}, p)$ denote the solution of the Stokes equation \eqref{eq:stokes}, and let $(\boldsymbol{u}_h$, $\boldsymbol{\lambda}_h$, $p_h)$ be the solution of the mixed finite element method~\eqref{distribustokesfemWG}. Let $\tilde{\bs u}$ be the solution of the dual problem \eqref{dualstokes}. Assume $\skw\grad\boldsymbol{f}\in L^2(\Omega;\mathbb K)$. We have
\begin{equation}
\label{eq:errorestimate40}
(\boldsymbol{\sigma}-\boldsymbol{\sigma}_h,\grad\tilde{\boldsymbol{u}}) \lesssim \begin{cases}
h\|\boldsymbol{\sigma}-\boldsymbol{\sigma}_h\|_{0,h} |\tilde{\boldsymbol{u}}|_{2}, & k\geq1,   \\
h^2\|\skw\grad\boldsymbol{f}\|\|\tilde{\boldsymbol{u}}\|_1, & k=0.
\end{cases}
\end{equation}
\end{lemma}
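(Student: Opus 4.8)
The plan is to estimate the quantity $(\boldsymbol{\sigma}-\boldsymbol{\sigma}_h,\grad\tilde{\boldsymbol{u}})$ by exploiting the orthogonality relations proven earlier. Since $\tilde{\boldsymbol{u}} \in H_0^1(\Omega;\mathbb{R}^d) \cap \ker(\div)$, I would first insert the commuting interpolation. Using $\boldsymbol{\sigma} = \dev\grad\boldsymbol{u}$, $\boldsymbol{\sigma}_h = \dev\grad_w(\boldsymbol{u}_h,\boldsymbol{\lambda}_h)$, and the weak dev grad commuting property from Lemma~\ref{lm:devinterpolant}, we have $(\dev\grad\tilde{\boldsymbol{u}}, \boldsymbol{\tau}_h) = (\dev\grad_w I_{k,k}^{\div}\tilde{\boldsymbol{u}}, \boldsymbol{\tau}_h)$ for $\boldsymbol{\tau}_h \in \Sigma_k^{\rm tn}$; since both $\boldsymbol{\sigma}$ and $\boldsymbol{\sigma}_h$ lie in (or pair naturally with) $\Sigma_k^{\rm tn}$, this lets me rewrite $(\boldsymbol{\sigma}-\boldsymbol{\sigma}_h, \grad\tilde{\boldsymbol{u}})$ as $-(\div_w(\boldsymbol{\sigma}-\boldsymbol{\sigma}_h), I_{k,k}^{\div}\tilde{\boldsymbol{u}})_{0,h}$ modulo the traceless adjustment. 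Then, because $I_{k,k}^{\div}\tilde{\boldsymbol{u}} = I_{k,k-1}^{\div}\tilde{\boldsymbol{u}} \in \mathring{\mathbb V}_{k,k-1}^{\div}\cap\ker(\div)$, the discrete equation \eqref{eq:vecLapdivfree2} and the commuting property \eqref{eq:divinterpolant} applied to $\boldsymbol{\sigma} = \dev\grad\boldsymbol{u}$ (which sits in $H^1(\Omega;\mathbb{T})$, with $-\div\boldsymbol{\sigma} = \boldsymbol{f} + \nabla p$) let me collapse the principal terms, leaving a residual that must be bounded by the stated rates.

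The key mechanism for the $k \ge 1$ case is that after the cancellations, the remaining term should look like $(\boldsymbol{\sigma} - I_k^{\rm tn}\boldsymbol{\sigma}, \grad_h I_{k,k}^{\div}\tilde{\boldsymbol{u}} - \Pi(\dots))$ plus face terms, i.e., the interpolation error of $\boldsymbol{\sigma}$ tested against the interpolation error of $\grad\tilde{\boldsymbol{u}}$; invoking \eqref{eq:Ihtnprop2} with $m=1$ on $\boldsymbol{\sigma}$ and a companion estimate on $\tilde{\boldsymbol{u}}$ with $m=2$ yields a factor $h \cdot h \cdot |\boldsymbol{\sigma}|_1 |\tilde{\boldsymbol{u}}|_2$, but since $|\boldsymbol{\sigma}|_1 = |\grad\boldsymbol{u}|_1$ and we want the cleaner form $h\|\boldsymbol{\sigma}-\boldsymbol{\sigma}_h\|_{0,h}|\tilde{\boldsymbol{u}}|_2$, I would instead keep $\boldsymbol{\sigma}-\boldsymbol{\sigma}_h$ intact on one side using the orthogonality $(\boldsymbol{\sigma}-\boldsymbol{\sigma}_h, \boldsymbol{\tau}_h) = 0$ established for suitable $\boldsymbol{\tau}_h$, so that the error $\boldsymbol{\sigma}-\boldsymbol{\sigma}_h$ pairs against something of size $h|\tilde{\boldsymbol{u}}|_2$ (the gap between $\grad_h I_{k,k}^{\div}\tilde{\boldsymbol{u}}$ and its $\Sigma_k^{\rm tn}$-projection, controlled because $\tilde{\boldsymbol{u}}$ is only $H^2$). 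The face terms in the $\|\cdot\|_{0,h}$ norm are exactly what accommodate the tangential-normal jumps, which is why the estimate is stated with $\|\boldsymbol{\sigma}-\boldsymbol{\sigma}_h\|_{0,h}$ rather than $\|\boldsymbol{\sigma}-\boldsymbol{\sigma}_h\|$.

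The $k = 0$ case is structurally different because the interpolation estimate \eqref{eq:Ihtnprop2} only gives $m=1$, which is not enough to produce two powers of $h$ from $\boldsymbol{\sigma}$ alone. Here the hypothesis $\skw\grad\boldsymbol{f} \in L^2(\Omega;\mathbb K)$ must be used: I would integrate by parts to shift a derivative, writing the residual in terms of $\boldsymbol{f}$ and its skew gradient. The point is that $-\div\boldsymbol{\sigma} = \boldsymbol{f} + \nabla p$ and $\boldsymbol{\sigma} = \grad\boldsymbol{u}$ is a gradient, so $\skw\grad(-\div\boldsymbol{\sigma}) = \skw\grad\boldsymbol{f}$; the symmetric part of $\grad\boldsymbol{f}$ is absorbed into the pressure-type term which is handled by the divergence-free property of $\tilde{\boldsymbol{u}}$ and of the test functions. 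A Poincaré/Bramble--Hilbert argument on each element (using that $\Sigma_0^{\rm tn}$ and the piecewise-constant velocity interact through jumps controlled by $h \|\skw\grad\boldsymbol{f}\|$) then delivers $h^2\|\skw\grad\boldsymbol{f}\|\,\|\tilde{\boldsymbol{u}}\|_1$.

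The main obstacle I anticipate is bookkeeping the traceless projection $Q_k^{\rm tn}$ versus the raw $L^2$-projection onto $\Sigma_k^{-1}(\mathbb{T})$, together with correctly tracking which terms land on element interiors versus faces after the integration by parts in $\div_w$: the definition \eqref{eq:divwT} produces both $[\boldsymbol{n}^\intercal\boldsymbol{\sigma}\boldsymbol{n}]$ and $[\Pi_F\boldsymbol{\sigma}\boldsymbol{n}]$ contributions, and one must verify that $\boldsymbol{\sigma}-\boldsymbol{\sigma}_h$ has vanishing jumps against the appropriate test spaces so that only the $h^{1/2}$-weighted boundary terms — i.e., precisely the $\|\cdot\|_{0,h}$ norm — survive. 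The $k=0$ integration-by-parts identity relating the residual to $\skw\grad\boldsymbol{f}$ is the other delicate point, since it requires recognizing that $\dev\grad$ of a piecewise-constant test velocity vanishes, forcing all content into the face jump terms.
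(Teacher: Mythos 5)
Your $k\ge 1$ argument is essentially the paper's: test \eqref{distribustokesfemWG1} with $\boldsymbol{v}_h=I_{k,k}^{\div}\tilde{\boldsymbol{u}}$, $\boldsymbol{\mu}_h=0$, cancel against \eqref{stokes1}, and you are left with $\boldsymbol{\sigma}-\boldsymbol{\sigma}_h$ paired elementwise against $\dev\grad(\tilde{\boldsymbol{u}}-I_{k,k}^{\div}\tilde{\boldsymbol{u}})$ plus face terms involving the tangential jumps of $\tilde{\boldsymbol{u}}-I_{k,k}^{\div}\tilde{\boldsymbol{u}}$; invoking \eqref{eq:Ihdivprop2} with $m=2$ (this is exactly where $k\ge1$ is needed) and a trace/scaling argument, the face terms produce the $h_F^{1/2}$-weighted part of $\|\cdot\|_{0,h}$ and the bound $h\|\boldsymbol{\sigma}-\boldsymbol{\sigma}_h\|_{0,h}|\tilde{\boldsymbol{u}}|_2$ follows. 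Your detour through $\div_w(\boldsymbol{\sigma}-\boldsymbol{\sigma}_h)$ and ``the gap between $\grad_h I_{k,k}^{\div}\tilde{\boldsymbol{u}}$ and its $\Sigma_k^{\rm tn}$-projection'' is unnecessary (and not literally well-defined, since $\boldsymbol{\sigma}\notin\Sigma_k^{\rm tn}$), but the essential cancellation and estimate coincide with the paper's.

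The $k=0$ case, however, has a genuine gap. After the same cancellation one reduces (as the paper does) to $-(\div\boldsymbol{\sigma},\,\tilde{\boldsymbol{u}}-I_{0,0}^{\div}\tilde{\boldsymbol{u}})$, and you correctly observe that only $\skw\grad\boldsymbol{f}$ should survive because $\skw\grad\nabla p=0$. But the mechanism you propose --- a local Poincar\'e/Bramble--Hilbert argument with ``jumps controlled by $h\|\skw\grad\boldsymbol{f}\|$'' --- cannot produce $h^2\|\skw\grad\boldsymbol{f}\|\,\|\tilde{\boldsymbol{u}}\|_1$: elementwise, $\tilde{\boldsymbol{u}}-I_{0,0}^{\div}\tilde{\boldsymbol{u}}$ is only of size $h\,|\tilde{\boldsymbol{u}}|_{1}$ (the $\mathrm{RT}_0$ interpolant has no second-order approximation property, and only $\|\tilde{\boldsymbol{u}}\|_1$ is available in this bound), and there is no elementwise way to trade a second power of $h$ for the derivative of $\boldsymbol{f}$. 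The paper's proof hinges on a global device absent from your sketch: since $\div\tilde{\boldsymbol{u}}=0$, there exists a skew-symmetric potential $\tilde{\boldsymbol{\tau}}\in H_0^2(\Omega;\mathbb K)$ with $\div\tilde{\boldsymbol{\tau}}=\tilde{\boldsymbol{u}}$ and $\|\tilde{\boldsymbol{\tau}}\|_2\lesssim\|\tilde{\boldsymbol{u}}\|_1$, and the commuting property $\div(I_h^{d-2}\tilde{\boldsymbol{\tau}})=I_{0,0}^{\div}(\div\tilde{\boldsymbol{\tau}})$ of the linear skew-symmetric $H(\div;\mathbb K)$ interpolant gives $\tilde{\boldsymbol{u}}-I_{0,0}^{\div}\tilde{\boldsymbol{u}}=\div(\tilde{\boldsymbol{\tau}}-I_h^{d-2}\tilde{\boldsymbol{\tau}})$. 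A further integration by parts, using that the remaining test field is skew-symmetric, turns the residual into $-(\skw\grad\boldsymbol{f},\,\tilde{\boldsymbol{\tau}}-I_h^{d-2}\tilde{\boldsymbol{\tau}})$, and the two powers of $h$ come from the $L^2$ interpolation error of the degree-one element applied to $\tilde{\boldsymbol{\tau}}\in H^2$, not from any local estimate on $\tilde{\boldsymbol{u}}$. Without this potential-plus-commuting-diagram step, the $k=0$ estimate in \eqref{eq:errorestimate40} does not follow from your argument.
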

\begin{proof}
It follows from \eqref{distribustokesfemWG1} with $\boldsymbol{v}_h = I_{k,k}^{\div}\tilde{\boldsymbol{u}}$ and $\boldsymbol{\mu}_h = 0$, $\div(I_{k,k}^{\div}\tilde{\boldsymbol{u}})=0$, and \eqref{stokes1} that
\begin{equation*}
(\boldsymbol{\sigma}_h, \dev \grad_w(I_{k,k}^{\div}\tilde{\boldsymbol{u}},0)) = (\boldsymbol{f}, I_{k,k}^{\div}\tilde{\boldsymbol{u}}) = -(\div\boldsymbol{\sigma}, I_{k,k}^{\div}\tilde{\boldsymbol{u}}).
\end{equation*}
That is, 
\[
\sum_{T\in\mathcal{T}_h}(\boldsymbol{\sigma}-\boldsymbol{\sigma}_h,\dev\grad(I_{k,k}^{\div}\tilde{\boldsymbol{u}}))_T - \sum_{F\in\mathcal{F}_h}(\Pi_F(\boldsymbol{\sigma}-\boldsymbol{\sigma}_h)\boldsymbol{n}, [\![\Pi_F(I_{k,k}^{\div}\tilde{\boldsymbol{u}})]\!])_{F}=0.
\]
Then
\begin{equation}\label{eq:20250923}
\begin{aligned}
(\boldsymbol{\sigma}-\boldsymbol{\sigma}_h,\grad\tilde{\boldsymbol{u}}) &= \sum_{T\in\mathcal{T}_h}(\boldsymbol{\sigma}-\boldsymbol{\sigma}_h,\dev\grad(\tilde{\boldsymbol{u}}-I_{k,k}^{\div}\tilde{\boldsymbol{u}}))_T \\
&\quad - \sum_{F\in\mathcal{F}_h}(\Pi_F(\boldsymbol{\sigma}-\boldsymbol{\sigma}_h)\boldsymbol{n}, [\![\Pi_F(\tilde{\boldsymbol{u}}-I_{k,k}^{\div}\tilde{\boldsymbol{u}})]\!])_{F}.
\end{aligned}
\end{equation}
When $k\geq1$, we get from the Cauchy-Schwarz inequality and the estimate \eqref{eq:Ihdivprop2} of $I_{k,k}^{\div}$ that
\begin{equation*}
(\boldsymbol{\sigma}-\boldsymbol{\sigma}_h,\grad\tilde{\boldsymbol{u}}) \lesssim h\|\boldsymbol{\sigma}-\boldsymbol{\sigma}_h\|_{0,h} |\tilde{\boldsymbol{u}}|_{2}.
\end{equation*}

Next consider case $k=0$. Applying integration by parts to the right hand side of \eqref{eq:20250923} to have
\begin{equation*}
(\boldsymbol{\sigma}-\boldsymbol{\sigma}_h,\grad\tilde{\boldsymbol{u}}) =-(\div\boldsymbol{\sigma},\tilde{\boldsymbol{u}}-I_{0,0}^{\div}\tilde{\boldsymbol{u}}).
\end{equation*}
Since $\div\tilde{\boldsymbol{u}}=0$ and $\div H_0^2(\Omega;\mathbb K)=H_0^1(\Omega;\mathbb R^d)\cap \ker(\div)$ (cf. \cite[Theorem 1.1]{CostabelMcIntosh2010}), 
there exists a $\tilde{\boldsymbol{\tau}}\in H_0^2(\Omega;\mathbb K)$ such that
\begin{equation*}
\div\tilde{\boldsymbol{\tau}}=\tilde{\boldsymbol{u}},\quad\textrm{and}\quad \|\tilde{\boldsymbol{\tau}}\|_2\lesssim \|\tilde{\boldsymbol{u}}\|_1.
\end{equation*}
Recall the linear finite element space of the $(d-2)$-form in skew-symmetric form \cite{Arnold2018,ArnoldFalkWinther2006}
\begin{equation*}
\mathbb V_{h}^{d-2}:=\{\boldsymbol{\tau}_h\in H(\div,\Omega;\mathbb K): \boldsymbol{\tau}_h|_T \in \mathbb P_{1}(T;\mathbb K) \;\;\textrm{ for } T \in \mathcal{T}_{h}\}.
\end{equation*}
The local DoFs for space $\mathbb V_{h}^{d-2}$ are given in \cite[(3.10)]{ChenHuangWei2024}. Let $I_h^{d-2}: H^2(\Omega;\mathbb K)\to \mathbb V_{h}^{d-2}$ be the nodal interpolation operator.
It holds the following commuting property:
\begin{equation*}
\div(I_h^{d-2}\boldsymbol{\tau})=I_{0,0}^{\div}(\div\boldsymbol{\tau})\quad\forall~\boldsymbol{\tau}\in H^2(\Omega;\mathbb K).
\end{equation*}
Then, employing integration by parts and \eqref{stokes1},
\begin{align*}
(\boldsymbol{\sigma}-\boldsymbol{\sigma}_h,\grad\tilde{\boldsymbol{u}}) &= -(\div\boldsymbol{\sigma},\div(\tilde{\boldsymbol{\tau}}-I_{h}^{d-2}\tilde{\boldsymbol{\tau}})) = (\skw\grad(\div\boldsymbol{\sigma}), \tilde{\boldsymbol{\tau}}-I_{h}^{d-2}\tilde{\boldsymbol{\tau}}) \\
& = -(\skw\grad\boldsymbol{f}, \tilde{\boldsymbol{\tau}}-I_{h}^{d-2}\tilde{\boldsymbol{\tau}}) .
\end{align*}
This together with the interpolation error estimate of $I_{h}^{d-2}$ gives
\begin{equation*}
(\boldsymbol{\sigma}_h-\boldsymbol{\sigma},\grad\tilde{\boldsymbol{u}})\lesssim h^2\|\skw\grad\boldsymbol{f}\||\tilde{\boldsymbol{\tau}}|_2\lesssim h^2\|\skw\grad\boldsymbol{f}\|\|\tilde{\boldsymbol{u}}\|_1.
\end{equation*}
This ends the proof.
\end{proof}

\begin{theorem}
Let $(\boldsymbol{u}, p)$ denote the solution of the Stokes equation \eqref{eq:stokes}, and let $(\boldsymbol{u}_h$, $\boldsymbol{\lambda}_h$, $p_h)$ be the solution of the mixed finite element method~\eqref{distribustokesfemWG}. Assume $\boldsymbol{u}\in H^{k+2}(\Omega;\mathbb R^d)$, $\skw\grad\boldsymbol{f}\in L^2(\Omega;\mathbb K)$, and the $H^2$-regularity \eqref{eq:regularity} holds. Then
\begin{equation}
\label{eq:errorestimate4}
\|I_{k,k}^{\div}\boldsymbol{u} - \boldsymbol{u}_h\| \lesssim h^{k+2}(|\boldsymbol{u}|_{k+2}  + \delta_{k0}\|\skw\grad\boldsymbol{f}\|),
\end{equation}
where $\delta_{00}=1$ and $\delta_{k0}=0$ for $k\geq1$.
\end{theorem}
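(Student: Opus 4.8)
The plan is a duality (Aubin--Nitsche) argument that reduces the $L^2$ error to quantities already bounded in \eqref{eq:errorestimate1} and \eqref{eq:errorestimate40}. Set $\bs e := I_{k,k}^{\div}\bs u-\bs u_h$. Since $\bs u$ is divergence-free, $I_{k,k}^{\div}\bs u=I_{k,k-1}^{\div}\bs u$, and $\bs u_h\in\mathring{\mathbb V}_{k,\ell}^{\div}\cap\ker(\div)=\mathring{\mathbb V}_{k,k-1}^{\div}\cap\ker(\div)$, so $\bs e\in\mathring{\mathbb V}_{k,k-1}^{\div}\cap\ker(\div)$; when $k=0$ this just says $\bs e$ is a divergence-free $\mathrm{RT}_0$ field, hence piecewise constant. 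Let $(\tilde{\bs u},\tilde p)$ solve the dual problem \eqref{dualstokes} and put $\tilde{\bs\sigma}:=\grad\tilde{\bs u}$, which is traceless (as $\div\tilde{\bs u}=0$) and lies in $H^1(\Omega;\mathbb T)$ by the regularity \eqref{eq:regularity}; note $\Delta\tilde{\bs u}=\div\tilde{\bs\sigma}$ and $\nabla\tilde p=\bs e+\Delta\tilde{\bs u}$ are in $L^2$.

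First I would turn $\|\bs e\|^2$ into a pairing with $\bs\sigma-\bs\sigma_h$. Testing the dual equation against $\bs e$: since $\div\bs e=0$ the pressure contribution $(\bs e,\nabla\tilde p)=-(\div\bs e,\tilde p)$ vanishes and $\Delta\tilde{\bs u}=\div\tilde{\bs\sigma}$, so
\[
\|\bs e\|^2=-(\bs e,\div\tilde{\bs\sigma})=-(\div_w I_k^{\rm tn}\tilde{\bs\sigma},\bs e)_{0,h}=(\dev\grad_w\bs e,\,I_k^{\rm tn}\tilde{\bs\sigma}),
\]
the middle identity being Lemma~\ref{lm:interpolant} and the last the adjoint definition of $\dev\grad_w$. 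Using the commuting property \eqref{eq:Qkgrad} for $\dev\grad_w I_{k,k}^{\div}\bs u$ and \eqref{eq:vecLapdivfree1} for $\dev\grad_w\bs u_h=\bs\sigma_h$ gives $\dev\grad_w\bs e=Q_k^{\rm tn}\bs\sigma-\bs\sigma_h=Q_k^{\rm tn}(\bs\sigma-\bs\sigma_h)$, hence, since $I_k^{\rm tn}\tilde{\bs\sigma}\in\Sigma_k^{\rm tn}$,
\[
\|\bs e\|^2=(\bs\sigma-\bs\sigma_h,\,I_k^{\rm tn}\tilde{\bs\sigma})=(\bs\sigma-\bs\sigma_h,\grad\tilde{\bs u})+(\bs\sigma-\bs\sigma_h,\,I_k^{\rm tn}\tilde{\bs\sigma}-\tilde{\bs\sigma}).
\]

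The remaining steps are routine. By Cauchy--Schwarz and the interpolation estimate \eqref{eq:Ihtnprop2} with $m=1$, the last term is $\lesssim h\|\bs\sigma-\bs\sigma_h\|\,|\tilde{\bs u}|_2\le h\|\bs\sigma-\bs\sigma_h\|_{0,h}|\tilde{\bs u}|_2$. The term $(\bs\sigma-\bs\sigma_h,\grad\tilde{\bs u})$ is precisely what the preceding lemma controls in \eqref{eq:errorestimate40}: it is $\lesssim h\|\bs\sigma-\bs\sigma_h\|_{0,h}|\tilde{\bs u}|_2$ for $k\ge1$ and $\lesssim h^2\|\skw\grad\bs f\|\,\|\tilde{\bs u}\|_1$ for $k=0$. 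Using the $H^2$-regularity \eqref{eq:regularity} to bound $\|\tilde{\bs u}\|_2$ by $\|\bs e\|$, dividing by $\|\bs e\|$, and finally inserting the $h^{k+1}$ estimate \eqref{eq:errorestimate1} for $\|\bs\sigma-\bs\sigma_h\|_{0,h}$ (and, in the $k=0$ remainder term, the consequence $\|\bs\sigma-\bs\sigma_h\|\lesssim h|\bs u|_2$) delivers \eqref{eq:errorestimate4}, with the $\skw\grad\bs f$ contribution surviving only when $k=0$.

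The only delicate point is applying Lemma~\ref{lm:interpolant} and \eqref{eq:Qkgrad} when $k=0$, since these are phrased in the $\mathrm{BDM}_k$/$\mathrm{RT}_k$ framework; but $\bs e$ is piecewise constant, so $\dev\grad_h\bs e=0$ and $[\Pi_F\bs e]$ is constant on each face, while $\Sigma_0^{\rm tn}$ consists of piecewise-constant traceless tensors with vanishing divergence and constant normal--normal trace, which is exactly what the proofs of Lemmas~\ref{lm:interpolant} and \ref{lm:devinterpolant} require. Apart from this there is no real obstacle here: the substantive work, especially the improved $k=0$ bound on $(\bs\sigma-\bs\sigma_h,\grad\tilde{\bs u})$ obtained via the skew-symmetric differential-form complex, has already been carried out in the previous lemma.
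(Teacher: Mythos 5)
Your proposal is correct and follows essentially the same duality argument as the paper: both reduce $\|I_{k,k}^{\div}\boldsymbol{u}-\boldsymbol{u}_h\|^2$ to the identity $(\boldsymbol{\sigma}-\boldsymbol{\sigma}_h,\,I_k^{\rm tn}\tilde{\boldsymbol{\sigma}})$, split off $I_k^{\rm tn}\tilde{\boldsymbol{\sigma}}-\tilde{\boldsymbol{\sigma}}$, and invoke \eqref{eq:errorestimate40}, \eqref{eq:regularity}, and \eqref{eq:errorestimate1}. The only cosmetic difference is that you obtain the key identity via Lemma~\ref{lm:interpolant} and the adjointness of $\div_w$ and $\dev\grad_w$, whereas the paper performs the elementwise integration by parts and DoF-matching of $I_k^{\rm tn}$ explicitly; the ingredients are the same.
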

\begin{proof}
Set $\boldsymbol{v}_h=I_{k,k}^{\div}\boldsymbol{u}-\boldsymbol{u}_h\in\mathring{\mathbb V}_{k,k-1}^{\div}\cap\ker(\div)$ for ease of presentation.
Let $\tilde{\bs \sigma} = \grad \tilde{\bs u}$. By the fact $\div\boldsymbol{v}_h=0$, and the definition of the operator $I_k^{\rm tn}$, we have
\begin{align*}
\|\boldsymbol{v}_h\|^2 &= -(\div \tilde{\boldsymbol{\sigma}}, \boldsymbol{v}_h) = \sum_{T\in\mathcal{T}_h}(\tilde{\boldsymbol{\sigma}},\dev \grad\boldsymbol{v}_h)_T - \sum_{T\in\mathcal{T}_h}(\Pi_F\boldsymbol{v}_h, \Pi_F\tilde{\boldsymbol{\sigma}}\boldsymbol{n})_{\partial T} \\
& = \sum_{T\in\mathcal{T}_h}(I_k^{\rm tn}\tilde{\boldsymbol{\sigma}},\dev \grad\boldsymbol{v}_h)_T -\sum_{T\in\mathcal{T}_h}(\Pi_F\boldsymbol{v}_h, \Pi_F(I_k^{\rm tn}\tilde{\boldsymbol{\sigma}})\boldsymbol{n})_{\partial T} \\
&=\big(I_k^{\rm tn}\tilde{\boldsymbol{\sigma}},\dev \grad_w(I_{k,k}^{\div}\boldsymbol{u}-\boldsymbol{u}_h,Q_{k,\mathcal{F}_h}\boldsymbol{\lambda}-\boldsymbol{\lambda}_h)\big).
\end{align*}
This combined with \eqref{eq:weakdevgradcommu} implies
\[
\begin{aligned}
\|I_{k,k}^{\div}\boldsymbol{u}-\boldsymbol{u}_h\|^2& = (\boldsymbol{\sigma}-\boldsymbol{\sigma}_h,I_k^{\rm tn}\tilde{\boldsymbol{\sigma}})\\
&=(\boldsymbol{\sigma}-\boldsymbol{\sigma}_h,I_k^{\rm tn}\tilde{\boldsymbol{\sigma}}-\tilde{\boldsymbol{\sigma}}) + (\boldsymbol{\sigma}-\boldsymbol{\sigma}_h,\grad\tilde{\boldsymbol{u}}).
\end{aligned}
\]
Then we get from the Cauchy-Schwarz inequality, the estimate \eqref{eq:Ihtnprop2} of $I_k^{\rm tn}$, \eqref{eq:errorestimate40} and the $H^2$-regularity \eqref{eq:regularity} that 
\begin{equation*}
\|I_{k,k}^{\div}\boldsymbol{u}-\boldsymbol{u}_h\|\lesssim h\|\boldsymbol{\sigma}-\boldsymbol{\sigma}_h\|_{0,h} + \delta_{k0}h^2\|\skw\grad\boldsymbol{f}\|.
\end{equation*}
Thus, the estimate \eqref{eq:errorestimate4} follows from \eqref{eq:errorestimate1} and the last inequality.
\end{proof}

\subsection{Postprocessing}
We can construct a superconvergent approximation of $\boldsymbol{u}$ using the estimates \eqref{eq:errorestimate1} and \eqref{eq:errorestimate4}.  
For each $T \in \mathcal{T}_h$, find $\boldsymbol{u}_h^{*}|_T\in \mathbb{P}_{k+1}(T;\mathbb{R}^d)$ and $p_h^{*}\in \mathbb P_k(T)/\mathbb{R}$ such that
\begin{subequations}\label{distribustokespostfem}
\begin{align}
(\boldsymbol{u}_h^{*}\cdot\boldsymbol{n},q)_F &=
(\boldsymbol{u}_h\cdot \boldsymbol{n},q)_F 
\quad\;\;\; \forall\,q\in\mathbb{P}_0(F),\ F\in\mathcal{F}(T), \label{eq:postprocessingu1}\\
(\grad\boldsymbol{u}_h^{*},\grad\boldsymbol{v})_T
+(\div\boldsymbol{v},p_h^{*})_T &=
(\boldsymbol{\sigma}_h,\grad\boldsymbol{v})_T 
\quad \forall\,\boldsymbol{v}\in\widetilde{\mathbb{P}}_{k+1}(T;\mathbb{R}^d), \label{eq:postprocessingu2}\\
(\div \boldsymbol{u}_h^{*},q)_T &= 0 
\qquad\qquad\qquad\; \forall\,q\in \mathbb P_k(T)/\mathbb{R}, \label{eq:postprocessingu3}
\end{align}
\end{subequations}
where 
$\widetilde{\mathbb{P}}_{k+1}(T;\mathbb{R}^d)
:= \{\boldsymbol{v}_h\in\mathbb{P}_{k+1}(T;\mathbb{R}^d): 
Q_{0,F}(\boldsymbol{v}_h\cdot\boldsymbol{n})=0\ \ \forall\,F\subset\partial T\}$.

The local problems \eqref{distribustokespostfem} are well-posed. From \eqref{eq:postprocessingu1}, it follows that
$$ Q_{0,T}(\div\boldsymbol{u}_h^{*}) = Q_{0,T}(\div\boldsymbol{u}_h) = 0. $$
Combined with \eqref{eq:postprocessingu3}, this identity implies that $\div\boldsymbol{u}_h^{*} = 0$ for each $T \in \mathcal{T}_h$. Consequently, the postprocessed velocity $\boldsymbol{u}_h^{*} \in \mathbb{P}_{k+1}(\mathcal{T}_h; \mathbb{R}^d)$ is pointwise divergence-free on every element. However, we note that $\boldsymbol{u}_h^{*}$ is not globally $H(\div)$-conforming, as the normal trace continuity \eqref{eq:postprocessingu1} is  weakly enforced for $q \in \mathbb{P}_0(F)$ only.

\begin{theorem}
Let $(\boldsymbol{u}, p)$ be the solution of the Stokes equation~\eqref{eq:stokes}, and let $(\boldsymbol{u}_h$, $\boldsymbol{\lambda}_h$, $p_h)$ be the solution of the mixed finite element method~\eqref{distribustokesfemWG}. 
Assume $\boldsymbol{u}\in H^{k+2}(\Omega;\mathbb R^d)$. Then
\begin{equation}\label{eq:postH1error}
\|\grad_h(\boldsymbol{u}-\boldsymbol{u}_h^{*})\|\lesssim h^{k+1}|\boldsymbol{u}|_{k+2}.
\end{equation}
If, in addition, $\skw\grad\boldsymbol{f} \in L^2(\Omega; \mathbb{K})$ and the $H^2$-regularity estimate~\eqref{eq:regularity} holds, we have
\begin{equation}\label{eq:postL2error}
\|\boldsymbol{u}-\boldsymbol{u}_h^{*}\|\lesssim h^{k+2}\big(|\boldsymbol{u}|_{k+2}  + \delta_{k0}\|\skw\grad\boldsymbol{f}\|\big).
\end{equation}
\end{theorem}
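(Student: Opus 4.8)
The plan is to compare $\boldsymbol{u}_h^{*}$ elementwise with a local Stokes reconstruction of the exact velocity. For $T\in\mathcal{T}_h$, an interior datum $\boldsymbol{g}\in H^1(T;\mathbb{R}^d)$ and a face datum $c\in\prod_{F\subset\partial T}\mathbb{P}_0(F)$, let $\boldsymbol{R}_T(\boldsymbol{g},c)\in\mathbb{P}_{k+1}(T;\mathbb{R}^d)$ (together with a local pressure) solve the same saddle-point system \eqref{distribustokespostfem}, but with $(\boldsymbol{\sigma}_h,\boldsymbol{u}_h\cdot\boldsymbol{n})$ replaced by $(\grad\boldsymbol{g},c)$. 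Exactly as for $\boldsymbol{u}_h^{*}$ one checks: $\div\boldsymbol{R}_T(\boldsymbol{g},c)=0$ whenever $c$ is the $\mathbb{P}_0$ normal trace of some divergence-free field; $\boldsymbol{R}_T$ is stable, $\|\grad\boldsymbol{R}_T(\boldsymbol{g},c)\|_T\lesssim\|\grad\boldsymbol{g}\|_T+h_T^{-1/2}(\sum_{F}\|c\|_F^2)^{1/2}$ (test against $\boldsymbol{R}_T(\boldsymbol{g},c)$ minus a divergence-free $\mathrm{RT}_0$ lifting of $c$, so the pressure drops); and $\boldsymbol{R}_T$ reproduces divergence-free polynomials of degree $\le k+1$ (if $\boldsymbol{p}\in\mathbb{P}_{k+1}(T;\mathbb{R}^d)$ with $\div\boldsymbol{p}=0$ and $c=Q_{0,F}(\boldsymbol{p}\cdot\boldsymbol{n})$, then $\boldsymbol{R}_T(\boldsymbol{p},c)-\boldsymbol{p}\in\widetilde{\mathbb{P}}_{k+1}(T;\mathbb{R}^d)$ is divergence-free, and testing the interior equation against it kills the pressure and the source, leaving $\|\grad(\boldsymbol{R}_T(\boldsymbol{p},c)-\boldsymbol{p})\|_T=0$, whence the zero-mean normal trace forces $\boldsymbol{R}_T(\boldsymbol{p},c)=\boldsymbol{p}$). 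Write $\boldsymbol{R}\boldsymbol{u}$ for the elementwise reconstruction with interior datum $\boldsymbol{u}$ and face datum $Q_{0,F}(\boldsymbol{u}_h\cdot\boldsymbol{n})$. Then $\boldsymbol{u}_h^{*}-\boldsymbol{R}\boldsymbol{u}$ has, on each $T$, the same $\mathbb{P}_0$ normal trace on $\partial T$, hence lies in $\widetilde{\mathbb{P}}_{k+1}(T;\mathbb{R}^d)$, and (both summands being divergence-free) it is divergence-free; subtracting the two interior equations and testing against $\boldsymbol{u}_h^{*}-\boldsymbol{R}\boldsymbol{u}$ makes the pressure term vanish, giving the clean bound $\|\grad(\boldsymbol{u}_h^{*}-\boldsymbol{R}\boldsymbol{u})\|_T\le\|\boldsymbol{\sigma}-\boldsymbol{\sigma}_h\|_T$, so $\|\grad_h(\boldsymbol{u}_h^{*}-\boldsymbol{R}\boldsymbol{u})\|\le\|\boldsymbol{\sigma}-\boldsymbol{\sigma}_h\|\lesssim h^{k+1}|\boldsymbol{u}|_{k+2}$ by \eqref{eq:errorestimate1}.

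Next I would bound $\|\grad_h(\boldsymbol{u}-\boldsymbol{R}\boldsymbol{u})\|$. By linearity of $\boldsymbol{R}_T$ and the reproduction property, subtracting a best elementwise divergence-free polynomial approximant $\boldsymbol{p}_T\in\mathbb{P}_{k+1}(T;\mathbb{R}^d)$ of $\boldsymbol{u}$ gives $\boldsymbol{R}_T\boldsymbol{u}-\boldsymbol{p}_T=\boldsymbol{R}_T(\boldsymbol{u}-\boldsymbol{p}_T,(\boldsymbol{u}_h-\boldsymbol{p}_T)\cdot\boldsymbol{n})$, and the stability estimate plus a trace inequality yield $\|\grad(\boldsymbol{u}-\boldsymbol{R}_T\boldsymbol{u})\|_T\lesssim h_T^{k+1}|\boldsymbol{u}|_{k+2,T}+h_T^{-1/2}\big(\sum_{F\in\mathcal{F}(T)}\|Q_{0,F}((\boldsymbol{u}_h-\boldsymbol{u})\cdot\boldsymbol{n})\|_F^2\big)^{1/2}$, the $\boldsymbol{u}\cdot\boldsymbol{n}$-versus-$\boldsymbol{p}_T\cdot\boldsymbol{n}$ part of the trace mismatch already being $O(h_T^{k+1}|\boldsymbol{u}|_{k+2,T})$. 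Summing and combining with the first step,
\[
\|\grad_h(\boldsymbol{u}-\boldsymbol{u}_h^{*})\|\lesssim h^{k+1}|\boldsymbol{u}|_{k+2}+\Big(\sum_{F\in\mathcal{F}_h}h_F^{-1}\|Q_{0,F}((\boldsymbol{u}_h-\boldsymbol{u})\cdot\boldsymbol{n})\|_F^2\Big)^{1/2}.
\]
Because $I_{k,k}^{\div}\boldsymbol{u}$ preserves all $\mathbb{P}_k$ normal moments, $Q_{0,F}((\boldsymbol{u}_h-\boldsymbol{u})\cdot\boldsymbol{n})=Q_{0,F}((\boldsymbol{u}_h-I_{k,k}^{\div}\boldsymbol{u})\cdot\boldsymbol{n})$, so the last sum equals, up to an $h$-uniform constant, $\|h^{-1}I_{0,0}^{\div}(\boldsymbol{u}_h-I_{k,k}^{\div}\boldsymbol{u})\|^2$; by $L^2$-stability of $I_{0,0}^{\div}$ on piecewise polynomials followed by an inverse estimate this is $\lesssim h^{-2}\|\boldsymbol{u}_h-I_{k,k}^{\div}\boldsymbol{u}\|^2$, which the $L^2$-superconvergence \eqref{eq:errorestimate4} controls by $h^{2(k+1)}(|\boldsymbol{u}|_{k+2}+\delta_{k0}\|\skw\grad\boldsymbol{f}\|)^2$ (and for $k\ge1$ one has $\skw\grad\boldsymbol{f}=-\skw\grad\Delta\boldsymbol{u}\in L^2$ automatically, since $\grad\nabla p$ is symmetric). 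This yields \eqref{eq:postH1error}.

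For \eqref{eq:postL2error} I would split $\|\boldsymbol{u}-\boldsymbol{u}_h^{*}\|\le\|\boldsymbol{u}-\boldsymbol{R}\boldsymbol{u}\|+\|\boldsymbol{R}\boldsymbol{u}-\boldsymbol{u}_h^{*}\|$. The first term is handled by the same decomposition in the $L^2$ norm: the $\mathrm{RT}_0$ normal-trace correction now contributes $\|I_{0,0}^{\div}(\boldsymbol{u}_h-\boldsymbol{u})\|\lesssim\|\boldsymbol{u}_h-I_{k,k}^{\div}\boldsymbol{u}\|$, which by \eqref{eq:errorestimate4} is $O(h^{k+2})$, while the polynomial part is $O(h^{k+2}|\boldsymbol{u}|_{k+2})$. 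For the second term, $(\boldsymbol{R}\boldsymbol{u}-\boldsymbol{u}_h^{*})|_T\in\widetilde{\mathbb{P}}_{k+1}(T;\mathbb{R}^d)$ has zero-mean normal trace on every face, so the only constant field it can contain is $\boldsymbol{0}$ (the face normals of a simplex span $\mathbb{R}^d$); by scaling this gives the Poincar\'e-type bound $\|\boldsymbol{R}\boldsymbol{u}-\boldsymbol{u}_h^{*}\|_T\lesssim h_T\|\grad(\boldsymbol{R}\boldsymbol{u}-\boldsymbol{u}_h^{*})\|_T\le h_T\|\boldsymbol{\sigma}-\boldsymbol{\sigma}_h\|_T$ from the first step, and summing with \eqref{eq:errorestimate1} gives $O(h^{k+2})$. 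Combining proves \eqref{eq:postL2error}.

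I expect the main obstacle to be the normal-flux data mismatch $\sum_{F\in\mathcal{F}_h}h_F^{-1}\|Q_{0,F}((\boldsymbol{u}_h-\boldsymbol{u})\cdot\boldsymbol{n})\|_F^2$: because it carries the ``wrong'' $h^{-1}$ scaling, a direct estimate only produces $O(h^{2k})$, and squeezing out the extra factor $h^2$ forces one to feed in the $L^2$-superconvergence of $\boldsymbol{u}_h$ toward $I_{k,k}^{\div}\boldsymbol{u}$ (via $I_{0,0}^{\div}(\boldsymbol{u}_h-\boldsymbol{u})=I_{0,0}^{\div}(\boldsymbol{u}_h-I_{k,k}^{\div}\boldsymbol{u})$); in particular the $H^1$ postprocessing estimate is not logically independent of the duality argument behind \eqref{eq:errorestimate4}. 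A secondary, routine point is the elementwise divergence-free polynomial approximation used in the Bramble--Hilbert steps, which follows from a local Stokes/$H(\div)$ complex on the simplex.
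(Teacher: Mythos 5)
Your $L^2$ argument is essentially sound and parallels the paper's, but your $H^1$ argument has a genuine gap: as you yourself observe, it routes the normal-flux mismatch $\sum_F h_F^{-1}\|Q_{0,F}((\boldsymbol{u}_h-\boldsymbol{u})\cdot\boldsymbol{n})\|_F^2$ into the gradient estimate and then repairs the lost power of $h$ by invoking the $L^2$ superconvergence \eqref{eq:errorestimate4}. But \eqref{eq:errorestimate4} rests on the duality argument, hence on the $H^2$-regularity \eqref{eq:regularity} and, for $k=0$, on $\skw\grad\boldsymbol{f}\in L^2(\Omega;\mathbb K)$ — hypotheses that the theorem deliberately does \emph{not} impose for \eqref{eq:postH1error} (they are only added for \eqref{eq:postL2error}). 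Your remark that ``the $H^1$ postprocessing estimate is not logically independent of the duality argument'' is therefore the symptom of a flaw in your route, not a feature of the problem: as written, your proof establishes \eqref{eq:postH1error} only on convex domains (and, when $k=0$, only for smoother $\boldsymbol{f}$), which is strictly weaker than the statement.

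The root cause is that your stability bound $\|\grad\boldsymbol{R}_T(\boldsymbol{g},c)\|_T\lesssim\|\grad\boldsymbol{g}\|_T+h_T^{-1/2}(\sum_F\|c\|_F^2)^{1/2}$ lets the face data pollute the gradient, when in fact they cannot. Since $\div\boldsymbol{R}_T(\boldsymbol{g},c)=0$, the $\mathrm{RT}_0$ component $I_{(0,0),T}^{\div}\boldsymbol{R}_T(\boldsymbol{g},c)$ is a \emph{constant} vector, so the datum $c$ (the $\mathbb P_0$ normal moments) determines $\boldsymbol{R}_T(\boldsymbol{g},c)$ only up to an additive constant and $\grad\boldsymbol{R}_T(\boldsymbol{g},c)$ is entirely independent of $c$. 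Consequently $\grad_h\boldsymbol{u}_h^{*}$ never sees the flux data of \eqref{eq:postprocessingu1}, and the mismatch term you worry about simply should not appear in the $H^1$ estimate. This is exactly what the paper's choice of test function $\boldsymbol{w}=(I-I_{(0,0),T}^{\div})(I_{(k+1,k),T}^{\div}\boldsymbol{u}-\boldsymbol{u}_h^{*})\in\widetilde{\mathbb P}_{k+1}(T;\mathbb R^d)$ exploits: $\grad\boldsymbol{w}=\grad(I_{(k+1,k),T}^{\div}\boldsymbol{u}-\boldsymbol{u}_h^{*})$ because the subtracted $\mathrm{RT}_0$ interpolant is constant, and testing \eqref{eq:postprocessingu2} with $\boldsymbol{w}$ yields $|I_{(k+1,k),T}^{\div}\boldsymbol{u}-\boldsymbol{u}_h^{*}|_{1,T}\le|\boldsymbol{u}-I_{(k+1,k),T}^{\div}\boldsymbol{u}|_{1,T}+\|\boldsymbol{\sigma}-\boldsymbol{\sigma}_h\|_T$, so \eqref{eq:postH1error} follows from \eqref{eq:errorestimate1} alone. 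If you sharpen your stability lemma to state that $\grad\boldsymbol{R}_T$ does not depend on $c$, your framework recovers the full statement; the flux data (and hence \eqref{eq:errorestimate4} with its extra hypotheses) are then needed only where they belong, namely to pin down the elementwise constants in the $L^2$ bound \eqref{eq:postL2error}.
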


\begin{proof}
Let 
\[
\boldsymbol{w}=(I-I_{(0,0),T}^{\div})(I_{(k+1,k),T}^{\div}\boldsymbol{u}-\boldsymbol{u}_h^{*})\in\widetilde{\mathbb{P}}_{k+1}(T;\mathbb{R}^d).
\]
By~\eqref{eq:Ihdivprop1},
\[
\div\big(I_{(0,0),T}^{\div}(I_{(k+1,k),T}^{\div}\boldsymbol{u}-\boldsymbol{u}_h^{*})\big)
=Q_{0,T}\div\boldsymbol{u}-Q_{0,T}\div\boldsymbol{u}_h^{*}=0.
\]
Hence $I_{(0,0),T}^{\div}(I_{(k+1,k),T}^{\div}\boldsymbol{u}-\boldsymbol{u}_h^{*})\in\mathbb{P}_{0}(T;\mathbb{R}^d)$, and applying~\eqref{eq:Ihdivprop1} again gives
\[
\div\boldsymbol{w}
=\div(I_{(k+1,k),T}^{\div}\boldsymbol{u}-\boldsymbol{u}_h^{*})
=Q_{k,T}\div\boldsymbol{u}-\div\boldsymbol{u}_h^{*}=0.
\]
Using~\eqref{eq:postprocessingu2} with $\boldsymbol{v}=\boldsymbol{w}$ yields
\begin{align*}
|\boldsymbol{w}|_{1,T}^2
&=(\grad(I_{(k+1,k),T}^{\div}\boldsymbol{u}-\boldsymbol{u}_h^{*}),\grad\boldsymbol{w})_T \\
&=(\grad(I_{(k+1,k),T}^{\div}\boldsymbol{u}-\boldsymbol{u}),\grad\boldsymbol{w})_T
+(\boldsymbol{\sigma}-\boldsymbol{\sigma}_h,\grad\boldsymbol{w})_T.
\end{align*}
Applying the interpolation estimate~\eqref{eq:Ihdivprop2} and the inverse inequality gives
\begin{equation}\label{eq:20250703}
\|\boldsymbol{w}\|_T\eqsim h_T|I_{(k+1,k),T}^{\div}\boldsymbol{u}-\boldsymbol{u}_h^{*}|_{1,T}
\lesssim h_T\big(|\boldsymbol{u}-I_{(k+1,k),T}^{\div}\boldsymbol{u}|_{1,T}+\|\boldsymbol{\sigma}-\boldsymbol{\sigma}_h\|_T\big).
\end{equation}
Thus, the estimate~\eqref{eq:postH1error} follows from the triangle inequality, 
the interpolation bound~\eqref{eq:Ihdivprop2}, 
and the stress estimate~\eqref{eq:errorestimate1}.

From~\eqref{eq:postprocessingu1},
\[
I_{(0,0),T}^{\div}(I_{(k+1,k),T}^{\div}\boldsymbol{u}-\boldsymbol{u}_h^{*})
=I_{(0,0),T}^{\div}(I_{(k,k),T}^{\div}\boldsymbol{u}-\boldsymbol{u}_h).
\]
Using the triangle inequality,~\eqref{eq:Ihdivprop2}, the inverse inequality, and~\eqref{eq:20250703}, we obtain
\begin{align*}
\|\boldsymbol{u}-\boldsymbol{u}_h^{*}\|_T
&\le \|\boldsymbol{u}-I_{(k+1,k),T}^{\div}\boldsymbol{u}\|_T
+\|I_{(0,0),T}^{\div}(I_{(k,k),T}^{\div}\boldsymbol{u}-\boldsymbol{u}_h)\|_T
+\|\boldsymbol{w}\|_T\\
&\lesssim \|\boldsymbol{u}-I_{(k+1,k),T}^{\div}\boldsymbol{u}\|_T
+\|I_{(k,k),T}^{\div}\boldsymbol{u}-\boldsymbol{u}_h\|_T\\
&\quad +h_T\big(|\boldsymbol{u}-I_{(k+1,k),T}^{\div}\boldsymbol{u}|_{1,T}
+\|\boldsymbol{\sigma}-\boldsymbol{\sigma}_h\|_T\big).
\end{align*}
Combining~\eqref{eq:Ihdivprop2},~\eqref{eq:errorestimate4}, and~\eqref{eq:errorestimate1} gives~\eqref{eq:postL2error}.
\end{proof}

\begin{remark}\rm
The estimate for $\|Q_{\ell}p-p_h\|$ in \eqref{eq:errorestimate1} is optimal when $\ell=k$ and superconvergent when $\ell=k-1$. For fixed $k\ge 0$, since
\[
\mathring{\mathbb{V}}_{k,k}^{\div}\cap \ker(\div)
=\mathring{\mathbb{V}}_{k,k-1}^{\div}\cap \ker(\div),
\]
the discrete solutions $\boldsymbol{u}_h$ and $\boldsymbol{\lambda}_h$ are independent of $\ell$. In fact, if $(\boldsymbol{u}_h,\boldsymbol{\lambda}_h,p_h)$ solves the mixed method \eqref{distribustokesfemWG} with $\ell=k$, then $(\boldsymbol{u}_h,\boldsymbol{\lambda}_h,Q_{k-1}p_h)$ is the solution of the method with $\ell=k-1$.
 When $k\ge 1$, we have $p_h-Q_{k-1}p_h\in \mathbb{P}_k(T)/\mathbb{R}$ and
\[
\div\big(\mathbb{V}_{k,k}^{\div}(T)\cap H_0(\div,T)\big)=\mathbb{P}_k(T)/\mathbb{R},
\]
so $p_h$ can be recovered locally from $Q_{k-1}p_h$. Specifically, for each $T\in\mathcal{T}_h$, $p_h-Q_{k-1}p_h$ is characterized by
\[
(\div\boldsymbol{v}_h, p_h-Q_{k-1}p_h)_T
= (\boldsymbol{f}, \boldsymbol{v}_h)_T-(\boldsymbol{\sigma}_h, \dev \grad_w(\boldsymbol{v}_h,0))_T-(\div\boldsymbol{v}_h, Q_{k-1}p_h)_T,
\]
for all $\boldsymbol{v}_h\in \mathbb{V}_{k,k}^{\div}(T)\cap H_0(\div,T)$.
We refer to \cite{BeiraodaVeigaLovadinaVacca2017,WeiHuangLi2021,HuangWang2023} for related pressure post-processing procedures in virtual element methods.
\end{remark}

\section{Equivalent Discrete Methods}\label{sec:hybrid}
We relate our mixed finite element method to several existing divergence-free schemes. First, hybridizing the velocity yields an equivalent $H^1$-nonconforming virtual element formulation, which simplifies implementation in low-order cases. Second, we show equivalence to stabilization-free virtual element methods, where the weak deviatoric gradient serves as a local projection. Finally, we compare our method with the Mass-Conserving-Stress (MCS) method \cite{GopalakrishnanLedererSchoeberl2020,GopalakrishnanLedererSchoeberl2020a} and show that our framework naturally covers low-order cases not included there.

\subsection{Hybridization of the velocity}

We recall the $H^1$-nonconforming virtual element in \cite{AyusodeDiosLipnikovManzini2016,ChenHuang2020}. 
For integers $k\geq0$ and $\ell=k,k-1$, the shape function space is defined as
\begin{equation}
\label{ncve}
V_{k+1,\ell+2}^{\rm VE}(T)=\{v\in H^1(T):\Delta v\in\mathbb{P}_{\ell}(T), \partial_n v|_F\in\mathbb{P}_{k}(F) \textrm{ for } F\in\mathcal{F}(T)\}.
\end{equation}
The DoFs for $V_{k+1,\ell+2}^{\rm VE}(T)$ are given by
\begin{subequations}\label{ve-dof}
	\begin{align}
		\label{ve-dof1}
		(v,q)_F, & \quad q\in \mathbb{P}_{k}(F), \, F\in\mathcal{F}(T),\\
		\label{ve-dof2}
		(v,q)_T, & \quad q\in \mathbb{P}_{\ell}(T).
	\end{align}
\end{subequations}
The global virtual element space $V_{k+1,\ell+2}^{\rm VE} = V_{k+1,\ell+2}^{\rm VE}(\mathcal T_h)$ by
\begin{align*}
V_{k+1,\ell+2}^{\rm VE}(\mathcal T_h) &= \{v\in L^2(\Omega):v|_T\in V_{k+1,\ell+2}^{\rm VE}(T) \textrm{ for each }T\in\mathcal{T}_h; \textrm{ DoF \eqref{ve-dof1} is } \\
&\qquad\qquad\qquad\qquad\qquad\quad\quad\;\;\textrm{ single-valued across each face in $\mathring{\mathcal{F}}_h$}\}.
\end{align*}
The virtual element space $V_{k+1,\ell+2}^{\rm VE}(\mathcal T_h)$ satisfies the weak continuity condition
\begin{equation*}
([\![v]\!], q)_F = 0 
\quad \forall\, v\in V_{k+1,\ell+2}^{\rm VE}(\mathcal T_h), \ 
q\in\mathbb{P}_{k}(F), \ 
F\in\mathring{\mathcal{F}}_h.
\end{equation*}
When $(k,\ell) = (0,-1)$, the space $V_{1,1}^{\rm VE}$ reduces to the Crouzeix--Raviart (CR) element~\cite{CrouzeixRaviart1973}.  
For $(k,\ell) = (0,0)$, the space $V_{1,2}^{\rm VE}$ is the enriched Crouzeix--Raviart element~\cite{HuMa2015,HuHuangLin2014}.

Find $\boldsymbol{u}_h\in \prod_{T\in \mathcal T_h}\mathbb V_{k,\ell}^{\div}(T)$, $\boldsymbol{\lambda}_h\in\mathbb P_{k}(\mathring{\mathcal{F}}_h;\mathbb R^{d-1})$ and $p_h\in V_{k+1,\ell+2}^{\rm VE}(\mathcal T_h)/\mathbb R$ such that
\begin{subequations}\label{distribustokesH1pfemWG}
\begin{align}
\label{distribustokesH1pfemWG1}
(\dev \grad_w(\boldsymbol{u}_h,\boldsymbol{\lambda}_h), \dev \grad_w(\boldsymbol{v}_h,\boldsymbol{\mu}_h)) - (\boldsymbol{v}_h, \nabla_hp_h) &= (\boldsymbol{f}, \boldsymbol{v}_h), \\
\label{distribustokesH1pfemWG2}
(\boldsymbol{u}_h, \nabla_hq_h)&=0
\end{align}
\end{subequations}
for all $\boldsymbol{v}_h\in \prod_{T\in \mathcal T_h}\mathbb V_{k,\ell}^{\div}(T)$, $\boldsymbol{\mu}_h\in\mathbb P_{k}(\mathring{\mathcal{F}}_h;\mathbb R^{d-1})$ and $q_h\in V_{k+1,\ell+2}^{\rm VE}(\mathcal T_h)/\mathbb R$. 

\begin{theorem}\label{thm:equivalence0}
The mixed method \eqref{distribustokesH1pfemWG} is well-posed.
Let $(\boldsymbol{u}_h, \boldsymbol{\lambda}_h, p_h)\in \prod_{T\in \mathcal T_h}$ $\mathbb V_{k,\ell}^{\div}(T)$ $\times \mathring{\Lambda}_k\times (V_{k+1,\ell+2}^{\rm VE}(\mathcal T_h)/\mathbb R)$ be its solution, then $(\boldsymbol{u}_h,\boldsymbol{\lambda}_h,Q_{\ell}p_h)\in \mathring{\mathbb{V}}_{k,\ell}^{\div}\times \mathring{\Lambda}_k\times(\mathbb{P}_{\ell}(\mathcal{T}_h)/\mathbb R)$ is the solution of the mixed finite element method \eqref{distribustokesfemWG}.
\end{theorem}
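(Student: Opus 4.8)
The plan is to show that \eqref{distribustokesH1pfemWG} is a square finite-dimensional system whose solution, after projecting the pressure by $Q_{\ell}$, solves \eqref{distribustokesfemWG}; since \eqref{distribustokesfemWG} is already known to be well posed, this both identifies the two solutions and, with one extra argument recovering $p_h$ from $Q_{\ell}p_h$, yields well-posedness of \eqref{distribustokesH1pfemWG}.

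First I would analyze the constraint \eqref{distribustokesH1pfemWG2}. For $\boldsymbol v_h\in\mathbb V_{k,\ell}^{\div}(T)$ and $q\in V_{k+1,\ell+2}^{\rm VE}(T)$, element-wise integration by parts gives $(\boldsymbol v_h,\nabla q)_T=-(\div\boldsymbol v_h,q)_T+(\boldsymbol v_h\cdot\boldsymbol n,q)_{\partial T}$; because $\div\boldsymbol v_h|_T\in\mathbb P_{\ell}(T)$ and $\boldsymbol v_h\cdot\boldsymbol n|_F\in\mathbb P_{k}(F)$, these pairings see $q$ only through the virtual-element DoFs \eqref{ve-dof2} and \eqref{ve-dof1} (and a constant added to $q$ does not change $\nabla_h q$, so the quotient by $\mathbb R$ is harmless). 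Testing \eqref{distribustokesH1pfemWG2} with the VE function supported on a single element $T$ having vanishing face DoFs and arbitrary interior datum $r\in\mathbb P_{\ell}(T)$ gives $(\div\boldsymbol u_h,r)_T=0$, hence $\div\boldsymbol u_h|_T=0$ for every $T$; testing with the VE function carrying an arbitrary single-valued datum $s\in\mathbb P_{k}(F)$ on one interior face $F$ and all other DoFs zero gives $(\llbracket\boldsymbol u_h\cdot\boldsymbol n\rrbracket,s)_F=0$, so the normal component of $\boldsymbol u_h$ is continuous across $F$; and testing with a datum on a boundary face gives $\boldsymbol u_h\cdot\boldsymbol n=0$ on $\partial\Omega$. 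Thus $\boldsymbol u_h\in\mathring{\mathbb V}_{k,\ell}^{\div}\cap\ker(\div)$.

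Next I would rewrite \eqref{distribustokesH1pfemWG1}. For $\boldsymbol v_h\in\mathring{\mathbb V}_{k,\ell}^{\div}$, integration by parts together with the facts that $\boldsymbol v_h\cdot\boldsymbol n$ is single-valued on interior faces, vanishes on $\partial\Omega$, and lies in $\mathbb P_{k}(F)$ while the face DoF of $p_h$ is single-valued, makes all interface terms cancel, so $-(\boldsymbol v_h,\nabla_h p_h)=(\div_h\boldsymbol v_h,Q_{\ell}p_h)=(\div\boldsymbol v_h,Q_{\ell}p_h)$. Hence \eqref{distribustokesH1pfemWG1}, restricted to $\boldsymbol v_h\in\mathring{\mathbb V}_{k,\ell}^{\div}$ and $\boldsymbol\mu_h\in\mathring{\Lambda}_k$, becomes exactly \eqref{distribustokesfemWG1} for $(\boldsymbol u_h,\boldsymbol\lambda_h,Q_{\ell}p_h)$; moreover $Q_{\ell}p_h$ has zero mean, so $Q_{\ell}p_h\in\mathbb P_{\ell}(\mathcal T_h)/\mathbb R$, and \eqref{distribustokesfemWG2} holds trivially since $\div\boldsymbol u_h=0$. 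By the established well-posedness of \eqref{distribustokesfemWG}, $(\boldsymbol u_h,\boldsymbol\lambda_h,Q_{\ell}p_h)$ is its unique solution.

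Finally, for well-posedness of \eqref{distribustokesH1pfemWG}, which is a square system, it suffices to prove uniqueness: if $\boldsymbol f=0$, the two steps above force $(\boldsymbol u_h,\boldsymbol\lambda_h,Q_{\ell}p_h)=0$; then \eqref{distribustokesH1pfemWG1} with $\boldsymbol u_h=\boldsymbol\lambda_h=\boldsymbol\mu_h=0$ and $Q_{\ell}p_h=0$ reduces to $\sum_{T}(\boldsymbol v_h\cdot\boldsymbol n,p_h)_{\partial T}=0$ for every $\boldsymbol v_h$ in the broken velocity space, and choosing $\boldsymbol v_h$ supported on one element with a single nonzero normal moment \eqref{Hdivfemdof1} on one face kills every face DoF of $p_h$, so by unisolvence of \eqref{ve-dof} we get $p_h=0$. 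The main obstacle is the first step: translating the single identity \eqref{distribustokesH1pfemWG2} into the three pointwise consequences (element-wise divergence-free, inter-element normal continuity, homogeneous normal trace) requires the exact match between the $H(\div)$ divergence and traces ($\mathbb P_{\ell}$ in the interior, $\mathbb P_{k}$ on faces) and the virtual-element DoFs \eqref{ve-dof}, together with the weak inter-element continuity of the VE space; once this dictionary is set up, the remaining steps are routine bookkeeping.
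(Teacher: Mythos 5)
Your proof is correct and follows essentially the same route as the paper: the constraint equation \eqref{distribustokesH1pfemWG2} is unpacked via element-wise integration by parts and the exact match between the $H(\div)$ traces/divergence and the virtual-element DoFs \eqref{ve-dof} to conclude $\boldsymbol{u}_h\in\mathring{\mathbb V}_{k,\ell}^{\div}\cap\ker(\div)$, and then $-(\boldsymbol{v}_h,\nabla_h p_h)=(\div\boldsymbol{v}_h,Q_{\ell}p_h)$ for $\boldsymbol{v}_h\in\mathring{\mathbb V}_{k,\ell}^{\div}$ yields the equivalence with \eqref{distribustokesfemWG}.

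The one place where you genuinely diverge is the last step of the uniqueness argument, recovering $p_h=0$ from $(\boldsymbol{v}_h,\nabla_h p_h)=0$ for all broken $\boldsymbol{v}_h$. The paper invokes the norm equivalence $\|Q_T^{\rm div}\nabla v\|_{0,T}\eqsim\|\nabla v\|_{0,T}$ on the virtual element space (equation \eqref{ve-infsup2}, cited from the VEM literature) to get $\nabla_h p_h=0$ and hence $p_h=0$ via weak continuity and the zero-mean condition. You instead integrate by parts once more and kill all DoFs of $p_h$ directly: the interior moments vanish because $Q_{\ell}p_h=0$, and the face moments vanish by testing with a $\boldsymbol{v}_h$ supported on one element carrying a single normal moment \eqref{Hdivfemdof1}. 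This is self-contained, avoids the external norm-equivalence result, and is arguably cleaner for this particular purpose; the paper's route via \eqref{ve-infsup2} has the advantage of reusing a tool it needs anyway in the companion virtual element discussion. Both arguments are sound.
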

\begin{proof}
We first prove the discrete method \eqref{distribustokesH1pfemWG} has the zero solution when $\boldsymbol{f}=0$. 
By applying the integration by parts to \eqref{distribustokesH1pfemWG2},
\begin{equation*}
\sum_{T\in\mathcal{T}_h}(\div\boldsymbol{u}_h, Q_{\ell,T}q_h)_T - \sum_{F\in\mathcal{F}_h}([\![\boldsymbol{u}_h\cdot\boldsymbol{n}]\!], Q_{k,F}q_h)_F=0 \quad \forall~q_h\in V_{k+1,\ell+2}^{\rm VE}(\mathcal T_h).
\end{equation*}
Thanks to DoFs \eqref{ve-dof}, the above equation implies $\boldsymbol{u}_h\in\mathring{\mathbb{V}}_{k,\ell}^{\div}$ and $\div\boldsymbol{u}_h=0$.

Let $Q_T^{\rm div}: L^2(T; \mathbb{R}^d) \to \mathbb V_{k,\ell}^{\div}(T)$ be the $L^2$-orthogonal projection operator. 
Recall the following norm equivalences on each $T\in\mathcal{T}_h$ established in ~\cite{HuangTang2025,ChenHuang2020,ChenHuangWei2024}:
\begin{align}
\label{ve-infsup2}
\|Q_{T}^{\rm div}\nabla v\|_{0,T}&\eqsim\|\nabla v\|_{0,T} \quad \forall \ v\in V_{k+1,\ell+2}^{\rm VE}(T).
\end{align}

By adding equation \eqref{distribustokesH1pfemWG1} with $(\boldsymbol{v}_h,\boldsymbol{\mu}_h)=(\boldsymbol{u}_h,\boldsymbol{\lambda}_h)$ and equation \eqref{distribustokesH1pfemWG2} with $q_h=p_h$, we get $\dev \grad_w(\boldsymbol{u}_h,\boldsymbol{\lambda}_h)=0$, which means $\boldsymbol{u}_h=0$ and $\boldsymbol{\lambda}_h=0$. Then we get from equation \eqref{distribustokesH1pfemWG1} and the norm equivalence \eqref{ve-infsup2} that $p_h=0$.
Thus, the discrete method \eqref{distribustokesH1pfemWG} is well-posed.

For the second part, equation \eqref{distribustokesH1pfemWG2} indicates $\boldsymbol{u}_h\in\mathring{\mathbb{V}}_{k,\ell}^{\div}$ and $\div\boldsymbol{u}_h=0$.
By restricting $\boldsymbol{v}_h\in\mathring{\mathbb{V}}_{k,\ell}^{\div}$, it follows from the integration by parts that
\[
 - (\boldsymbol{v}_h, \nabla_hp_h) = (\div\boldsymbol{v}_h, Q_{\ell}p_h).
\]
Hence, $(\boldsymbol{u}_h,\boldsymbol{\lambda}_h,Q_{\ell}p_h)$ satisfies equation \eqref{distribustokesfemWG1}. This ends the proof.
\end{proof}


The normal continuity for the velocity space is relaxed in \eqref{distribustokesH1pfemWG} which is particularly useful when $\ell = -1$. As a local basis for the space $\mathring{\mathbb{V}}_{0,-1}^{\div}$ is not readily available, we can implement the equivalent mixed method \eqref{distribustokesH1pfemWG} instead of the mixed method \eqref{distribustokesfemWG}.

Even for $(k, \ell)=(0, -1)$, 
our method has the following estimates:
\begin{equation*}
\begin{aligned}
&\|\boldsymbol{\sigma} - \boldsymbol{\sigma}_h\|_{0,h} + \|\grad_h(\boldsymbol{u}-\boldsymbol{u}_h^{*})\| + \|\dev\grad_w(I_{0,0}^{\div}\boldsymbol{u} - \boldsymbol{u}_h,Q_{0,\mathcal{F}_h}\boldsymbol{\lambda}-\boldsymbol{\lambda}_h)\| \lesssim h |\boldsymbol{u}|_{2},\\
&\|\boldsymbol{u} - \boldsymbol{u}_h\|_0+h\|\grad_h(\boldsymbol{u} - \boldsymbol{u}_h)\| \lesssim h( |\boldsymbol{u}|_{2} + |\boldsymbol{u}|_{1}),\\	
&\|\boldsymbol{u}-\boldsymbol{u}_h^{*}\|+\|I_{0,0}^{\div}\boldsymbol{u} - \boldsymbol{u}_h\| \lesssim h^{2}(|\boldsymbol{u}|_{2}  + \|\skw\grad\boldsymbol{f}\|).
\end{aligned}
\end{equation*}

\subsection{Stabilization-free virtual element methods}
For $k\geq0$, define the space of vector-valued shape functions for an $H^1$-nonconforming virtual element as
\begin{align*}
\mathbb{V}_{k+1}^{\rm VE}(T):=\{\bs v\in H^1(T;\mathbb R^d): &\div\bs v\in\mathbb P_{k}(T), \textrm{ there exists some } s\in L^2(T) \\
&  \textrm{ such that } \Delta\bs v+\nabla s\in \big(\mathbb P_{k-1}(T;\mathbb R^d)\cap\ker(\cdot\boldsymbol x)\big),  \\
&\textrm{ and } (\partial_n\bs v+s\bs n)|_F\in\mathbb P_{k}(F;\mathbb R^d) \;\forall~F\in\mathcal F(T)\}.
\end{align*}
Following the argument in \cite[Section 3.1]{WeiHuangLi2021},
the local virtual element space $\mathbb{V}_{k+1}^{\rm VE}(T)$ is uniquely determined by the following DoFs
\begin{subequations}\label{vev-dof}
\begin{align}
(\bs v, \bs q)_F, & \quad \bs q\in\mathbb P_{k}(F; \mathbb R^d),  F\in\mathcal F(T), \label{vev-dof1}\\
(\bs v, \bs q)_T, & \quad \bs q\in\mathbb P_{k-1}(T;\mathbb R^d). \label{vev-dof2}
\end{align}
\end{subequations}
Clearly we have $\mathbb P_{k+1}(T;\mathbb R^d)\subseteq\mathbb{V}_{k+1}^{\rm VE}(T)$, and $\mathbb{V}_{1}^{\rm VE}(T)=\mathbb P_1(T;\mathbb R^d)$.
Then define the global virtual element space by
\begin{align*}
\mathring{\mathbb{V}}_{h}^{\rm VE} &:= \{\boldsymbol{v}\in L^2(\Omega;\mathbb R^d):\boldsymbol{v}|_T\in\mathbb{V}_{k+1}^{\rm VE}(T) \textrm{ for each }T\in\mathcal{T}_h; \textrm{ DoF \eqref{vev-dof1} is } \\
&\qquad\qquad\qquad\quad\quad\;\;\textrm{ single-valued across each face in $\mathring{\mathcal{F}}_h$, and vanishes on $\partial\Omega$}\}.
\end{align*}
A mixed virtual element method for the Stokes equation \eqref{eq:stokes} is to find $\boldsymbol{u}_h\in \mathring{\mathbb{V}}_{h}^{\rm VE}$ and $p_h\in\mathbb{P}_{k}(\mathcal{T}_h)/\mathbb R$ with integer $k\geq 0$, such that
\begin{subequations}\label{distribustokesvem}
\begin{align}
\label{distribustokesvem1}
(Q_{k}(\dev \grad_h\boldsymbol{u}_h), Q_{k}(\dev \grad_h\boldsymbol{v}_h)) + (\div_h\boldsymbol{v}_h, p_h) &= (\boldsymbol{f}, I_{k,k}^{\div}\boldsymbol{v}_h), \\
\label{distribustokesvem2}
(\div_h\boldsymbol{u}_h, q_h)&=0
\end{align}
\end{subequations}
for all $\boldsymbol{v}_h\in \mathring{\mathbb{V}}_{h}^{\rm VE}$ and $q_h\in\mathbb{P}_{k}(\mathcal{T}_h)/\mathbb R$.
When $k=0$, the mixed method \eqref{distribustokesvem} is exactly the modified Crouzeix-Raviart element method in \cite[(23)]{Linke2014}.


\begin{lemma}
It holds
\begin{equation}\label{eq:weakdevgradvemcommu}
\dev \grad_w(I_{k,k}^{\div}\boldsymbol{v}_h, Q_{k,\mathcal{F}_h}(\Pi_F\boldsymbol{v}_h))	= Q_{k}(\dev \grad_h\boldsymbol{v}_h)\quad\forall\,\boldsymbol{v}_h\in \mathring{\mathbb{V}}_{h}^{\rm VE}.
\end{equation}
\end{lemma}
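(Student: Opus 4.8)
The plan is to prove \eqref{eq:weakdevgradvemcommu} elementwise. Fix $T\in\mathcal{T}_h$ and note that both $\dev\grad_w(I_{k,k}^{\div}\boldsymbol{v}_h, Q_{k,\mathcal{F}_h}(\Pi_F\boldsymbol{v}_h))$ and $Q_k(\dev\grad_h\boldsymbol{v}_h)$ restrict on $T$ to elements of $\mathbb{P}_k(T;\mathbb{T})$; hence it suffices to show that their $(\cdot,\cdot)_T$-pairings with an arbitrary $\boldsymbol{\tau}\in\mathbb{P}_k(T;\mathbb{T})$ agree.

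First I would expand the left-hand side by the elementwise definition of $\dev\grad_w$, obtaining $-(I_{k,k}^{\div}\boldsymbol{v}_h,\div\boldsymbol{\tau})_T+(\boldsymbol{n}\cdot(I_{k,k}^{\div}\boldsymbol{v}_h),\boldsymbol{n}^{\intercal}\boldsymbol{\tau}\boldsymbol{n})_{\partial T}+(Q_{k,\mathcal{F}_h}(\Pi_F\boldsymbol{v}_h),\Pi_F\boldsymbol{\tau}\boldsymbol{n})_{\partial T}$. Because $\boldsymbol{\tau}\in\mathbb{P}_k(T;\mathbb{T})$ we have $\div\boldsymbol{\tau}|_T\in\mathbb{P}_{k-1}(T;\mathbb{R}^d)$, which is reproduced by the interior moments \eqref{Hdivfemdof2} of the ${\rm RT}_k$ interpolant, and $\boldsymbol{n}^{\intercal}\boldsymbol{\tau}\boldsymbol{n}|_F\in\mathbb{P}_k(F)$, which is reproduced by the face moments \eqref{Hdivfemdof1}; therefore the first two terms collapse to $-(\boldsymbol{v}_h,\div\boldsymbol{\tau})_T+(\boldsymbol{n}\cdot\boldsymbol{v}_h,\boldsymbol{n}^{\intercal}\boldsymbol{\tau}\boldsymbol{n})_{\partial T}$. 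Similarly $\Pi_F\boldsymbol{\tau}\boldsymbol{n}|_F\in\mathbb{P}_k(F;\mathbb{R}^{d-1})$, so the projector $Q_{k,\mathcal{F}_h}$ may be dropped and the third term becomes $(\Pi_F\boldsymbol{v}_h,\Pi_F\boldsymbol{\tau}\boldsymbol{n})_{\partial T}$. (This is exactly the elementwise content of Lemma~\ref{lm:weakdevgrad}, now used without the global $H^1$-conformity of $\boldsymbol{v}_h$.)

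Next I would integrate by parts, using $\boldsymbol{v}_h|_T\in H^1(T;\mathbb{R}^d)$ from the definition of $\mathbb{V}_{k+1}^{\rm VE}(T)$, to write $-(\boldsymbol{v}_h,\div\boldsymbol{\tau})_T=(\grad\boldsymbol{v}_h,\boldsymbol{\tau})_T-(\boldsymbol{v}_h,\boldsymbol{\tau}\boldsymbol{n})_{\partial T}$, and then split the boundary term by the pointwise orthogonal decomposition $\boldsymbol{v}_h=(\boldsymbol{n}\cdot\boldsymbol{v}_h)\boldsymbol{n}+\Pi_F\boldsymbol{v}_h$ on $\partial T$, which gives $(\boldsymbol{v}_h,\boldsymbol{\tau}\boldsymbol{n})_{\partial T}=(\boldsymbol{n}\cdot\boldsymbol{v}_h,\boldsymbol{n}^{\intercal}\boldsymbol{\tau}\boldsymbol{n})_{\partial T}+(\Pi_F\boldsymbol{v}_h,\Pi_F\boldsymbol{\tau}\boldsymbol{n})_{\partial T}$. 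All three boundary contributions cancel, leaving $(\grad\boldsymbol{v}_h,\boldsymbol{\tau})_T$, and since $\boldsymbol{\tau}$ is traceless this equals $(\dev\grad_h\boldsymbol{v}_h,\boldsymbol{\tau})_T=(Q_k(\dev\grad_h\boldsymbol{v}_h),\boldsymbol{\tau})_T$. As $\boldsymbol{\tau}\in\mathbb{P}_k(T;\mathbb{T})$ was arbitrary and both sides of \eqref{eq:weakdevgradvemcommu} lie in $\mathbb{P}_k(T;\mathbb{T})$ on $T$, the identity follows on each element, hence globally.

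I do not expect a real obstacle; the only points needing care are (i) the identification of tangential traces $\Pi_F(\cdot)$ with vectors in $\mathbb{R}^{d-1}$, which is what makes the pairing $(\Pi_F\boldsymbol{v}_h,\Pi_F\boldsymbol{\tau}\boldsymbol{n})_{\partial T}$ and the removal of $Q_{k,\mathcal{F}_h}$ legitimate, and (ii) running the argument elementwise, since $\mathring{\mathbb{V}}_h^{\rm VE}$ is only piecewise $H^1$ and Lemma~\ref{lm:weakdevgrad} as stated assumes global $H^1$-regularity; both are routine.
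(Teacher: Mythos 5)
Your proof is correct and follows essentially the same route as the paper: expand the elementwise definition of $\dev\grad_w$ against $\boldsymbol{\tau}\in\mathbb{P}_k(T;\mathbb{T})$, use the ${\rm RT}_k$ degrees of freedom to replace $I_{k,k}^{\div}\boldsymbol{v}_h$ by $\boldsymbol{v}_h$ (and drop $Q_{k,\mathcal{F}_h}$), then integrate by parts. You simply spell out the boundary-term decomposition and the tracelessness of $\boldsymbol{\tau}$ more explicitly than the paper does.
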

\begin{proof}
By the definitions of the operator $\dev \grad_w$ and the operator $I_{k,k}^{\div}$,	
we have for any $\boldsymbol{\tau}\in \mathbb{P}_{k}(T;\mathbb{T})$ and $T\in\mathcal{T}_h$ that
\begin{align*}
&\quad\; (\dev \grad_w(I_{k,k}^{\div}\boldsymbol{v}_h, Q_{k,\mathcal{F}_h}(\Pi_F\boldsymbol{v}_h)), \boldsymbol{\tau})_T \\
&=-(I_{k,k}^{\div}\boldsymbol{v}_h,\div\boldsymbol{\tau})_T+(\boldsymbol{n}\cdot(I_{k,k}^{\div}\boldsymbol{v}_h),\boldsymbol{n}^{\intercal}\boldsymbol{\tau}\boldsymbol{n})_{\partial T}+(\Pi_F\boldsymbol{v}_h, \Pi_F\boldsymbol{\tau}\boldsymbol{n})_{\partial T}\\
&=-(\boldsymbol{v}_h,\div\boldsymbol{\tau})_T+(\boldsymbol{n}\cdot\boldsymbol{v}_h,\boldsymbol{n}^{\intercal}\boldsymbol{\tau}\boldsymbol{n})_{\partial T}+(\Pi_F\boldsymbol{v}_h, \Pi_F\boldsymbol{\tau}\boldsymbol{n})_{\partial T}.
\end{align*}
Then \eqref{eq:weakdevgradvemcommu} follows from the integration by parts.
\end{proof}

As a consequence of \eqref{eq:weakdevgradvemcommu}, $\|Q_{k}(\dev \grad_h\boldsymbol{v}_h)\|$ defines a norm on the space $\mathring{\mathbb{V}}_{h}^{\rm VE}\cap\ker(\div_h)$.

\begin{theorem}
The mixed virtual element method \eqref{distribustokesvem} is well-posed.
Let $(\boldsymbol{u}_h, p_h)\in \mathring{\mathbb{V}}_{h}^{\rm VE} \times \mathbb{P}_{k}(\mathcal{T}_h)/\mathbb R$ be its solution.
Then $(I_{k,k}^{\div}\boldsymbol{u}_h, Q_{k,\mathcal{F}_h}(\Pi_F\boldsymbol{u}_h), Q_{\ell}p_h)\in \mathring{\mathbb{V}}_{k,\ell}^{\div} \times\mathbb P_{k}(\mathring{\mathcal{F}}_h$; $\mathbb R^{d-1})$ $\times \mathbb{P}_{\ell}(\mathcal{T}_h)/\mathbb R$ be the solution of the mixed finite element method \eqref{distribustokesfemWG}.
\end{theorem}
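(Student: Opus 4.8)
The plan is to follow the same two-part structure used in the proof of Theorem~\ref{thm:equivalence0}: first establish well-posedness of \eqref{distribustokesvem}, then verify that the claimed transfer of the solution satisfies the defining equations of \eqref{distribustokesfemWG}. For well-posedness, since \eqref{distribustokesvem} is a square linear system it suffices to show that $\boldsymbol{f}=0$ forces the zero solution. Taking $q_h = p_h$ in \eqref{distribustokesvem2} shows $\boldsymbol{u}_h \in \mathring{\mathbb{V}}_h^{\rm VE}\cap\ker(\div_h)$, and then $\boldsymbol{v}_h=\boldsymbol{u}_h$ in \eqref{distribustokesvem1} gives $Q_k(\dev\grad_h\boldsymbol{u}_h)=0$. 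By \eqref{eq:weakdevgradvemcommu} this equals $\dev\grad_w(I_{k,k}^{\div}\boldsymbol{u}_h, Q_{k,\mathcal{F}_h}(\Pi_F\boldsymbol{u}_h))$, and since $\|Q_k(\dev\grad_h(\cdot))\|$ is a norm on $\mathring{\mathbb{V}}_h^{\rm VE}\cap\ker(\div_h)$ (the consequence of \eqref{eq:weakdevgradvemcommu} noted just above the theorem), we get $\boldsymbol{u}_h=0$. Finally, with $\boldsymbol{u}_h=0$ equation \eqref{distribustokesvem1} reduces to $(\div_h\boldsymbol{v}_h,p_h)=0$ for all $\boldsymbol{v}_h$; since $\div_h$ maps $\mathring{\mathbb{V}}_h^{\rm VE}$ onto $\mathbb{P}_k(\mathcal{T}_h)/\mathbb{R}$ — which needs $\mathbb{P}_{k+1}(T;\mathbb{R}^d)\subseteq\mathbb{V}_{k+1}^{\rm VE}(T)$ together with surjectivity of $\div$ from polynomials — we conclude $p_h=0$.

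For the second part, set $\boldsymbol{w}_h := I_{k,k}^{\div}\boldsymbol{u}_h$, $\boldsymbol{\nu}_h := Q_{k,\mathcal{F}_h}(\Pi_F\boldsymbol{u}_h)$, and $\tilde p_h := Q_\ell p_h$ (here $\ell=k$, so in fact $\tilde p_h = p_h$, but I keep the notation uniform with the statement). First I would check $\boldsymbol{w}_h \in \mathring{\mathbb{V}}_{k,\ell}^{\div}$ with $\div\boldsymbol{w}_h=0$: the face DoFs \eqref{Hdivfemdof1} of $I_{k,k}^{\div}\boldsymbol{u}_h$ are moments of $\boldsymbol{u}_h\cdot\boldsymbol{n}$ against $\mathbb{P}_k(F)$, and since $\boldsymbol{u}_h\in\mathring{\mathbb{V}}_h^{\rm VE}$ has DoF \eqref{vev-dof1} single-valued across interior faces and vanishing on $\partial\Omega$, these normal moments are continuous and zero on the boundary; hence $\boldsymbol{w}_h\in\mathring{\mathbb{V}}_{k,\ell}^{\div}$. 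The divergence-free property follows from the commuting relation \eqref{eq:Ihdivprop1}: $\div\boldsymbol{w}_h = Q_\ell(\div\boldsymbol{u}_h)=0$ because \eqref{distribustokesvem2} says $\div_h\boldsymbol{u}_h$ is $L^2$-orthogonal to $\mathbb{P}_k(\mathcal{T}_h)/\mathbb{R}$ and $\div_h\boldsymbol{u}_h\in\mathbb{P}_k(\mathcal{T}_h)$ by definition of $\mathbb{V}_{k+1}^{\rm VE}(T)$, so $\div_h\boldsymbol{u}_h$ is constant on each element with zero mean, i.e.\ identically zero. This gives \eqref{distribustokesfemWG2} for the transferred solution.

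It remains to verify \eqref{distribustokesfemWG1}. The key is the identity \eqref{eq:weakdevgradvemcommu}, which rewrites the VE stress $Q_k(\dev\grad_h\boldsymbol{u}_h)$ as exactly $\dev\grad_w(\boldsymbol{w}_h,\boldsymbol{\nu}_h)$, so the stress term in \eqref{distribustokesvem1} matches that in \eqref{distribustokesfemWG1} verbatim once we test against $(\boldsymbol{v}_h,\boldsymbol{\mu}_h)$ of the form $(\text{arbitrary in }\mathring{\mathbb{V}}_{k,\ell}^{\div}$, arbitrary in $\mathring{\Lambda}_k)$. Here the main obstacle — and the step I would spend the most care on — is that the test spaces do not literally coincide: \eqref{distribustokesvem1} tests against $\boldsymbol{v}_h\in\mathring{\mathbb{V}}_h^{\rm VE}$ whereas \eqref{distribustokesfemWG1} tests against $(\boldsymbol{v}_h,\boldsymbol{\mu}_h)\in\mathring{\mathbb{V}}_{k,\ell}^{\div}\times\mathring{\Lambda}_k$. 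To bridge this, I would note that for any $\boldsymbol{v}_h\in\mathring{\mathbb{V}}_h^{\rm VE}$ the pair $(I_{k,k}^{\div}\boldsymbol{v}_h, Q_{k,\mathcal{F}_h}(\Pi_F\boldsymbol{v}_h))$ lies in $\mathring{\mathbb{V}}_{k,\ell}^{\div}\times\mathring{\Lambda}_k$, and conversely this assignment is surjective onto a subset rich enough that equality of the bilinear forms on it, combined with the right-hand side identity $(\boldsymbol{f},I_{k,k}^{\div}\boldsymbol{v}_h)$ appearing in \eqref{distribustokesvem1}, forces \eqref{distribustokesfemWG1}; the pressure term transfers because $(\div_h\boldsymbol{v}_h,p_h) = (\div(I_{k,k}^{\div}\boldsymbol{v}_h), p_h)$ by \eqref{eq:Ihdivprop1} and the fact that $p_h\in\mathbb{P}_k(\mathcal{T}_h)$. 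The subtlety is checking that the map $\boldsymbol{v}_h\mapsto(I_{k,k}^{\div}\boldsymbol{v}_h, Q_{k,\mathcal{F}_h}(\Pi_F\boldsymbol{v}_h))$ hits every pair needed — equivalently, that \eqref{distribustokesfemWG1} need only be tested on the image — which one sees by a dimension/DoF count: the DoFs \eqref{vev-dof} of $\mathbb{V}_{k+1}^{\rm VE}(T)$ are precisely the face and interior moments that determine both $I_{k,k}^{\div}\boldsymbol{v}_h$ (via \eqref{eq:HdivDoF} with $\ell=k$) and $Q_{k,\mathcal{F}_h}(\Pi_F\boldsymbol{v}_h)$, so the map is onto $\mathring{\mathbb{V}}_{k,\ell}^{\div}\times\mathring{\Lambda}_k$.
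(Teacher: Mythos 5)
Your proposal is correct and follows essentially the same route as the paper: well-posedness by the argument of Theorem~\ref{thm:equivalence0}, and transfer of the solution via the commuting identities \eqref{eq:weakdevgradvemcommu} and \eqref{eq:Ihdivprop1}, with your DoF count showing that $\boldsymbol{v}_h\mapsto(I_{k,k}^{\div}\boldsymbol{v}_h,\,Q_{k,\mathcal{F}_h}(\Pi_F\boldsymbol{v}_h))$ is onto $\mathring{\mathbb{V}}_{k,k}^{\div}\times\mathring{\Lambda}_k$ being precisely the point the paper leaves implicit. One small slip: taking $q_h=p_h$ in \eqref{distribustokesvem2} only cancels the pressure cross term and does not by itself give $\boldsymbol{u}_h\in\ker(\div_h)$; for that you should test with $q_h=\div_h\boldsymbol{u}_h$ (admissible since $\div_h\boldsymbol{u}_h\in\mathbb{P}_k(\mathcal{T}_h)$ with zero global mean), which is the argument you in fact supply correctly later in the proof.
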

\begin{proof}
The well-posedness of the mixed finite element method \eqref{distribustokesvem} can be proved using an argument analogous to that in Theorem~\ref{thm:equivalence0}.

We then prove the equivalence between the discrete method \eqref{distribustokesvem} and the discrete method \eqref{distribustokesfemWG}.  By \eqref{eq:weakdevgradvemcommu}, the equation \eqref{distribustokesvem1} means that $(I_{k,k}^{\div}\boldsymbol{u}_h, Q_{k,\mathcal{F}_h}(\Pi_F\boldsymbol{u}_h), Q_{\ell}p_h)$ satisfies equation \eqref{distribustokesfemWG1}. By the commuting property \eqref{eq:Ihdivprop1}, the equation \eqref{distribustokesvem1} indicates that $I_{k,k}^{\div}\boldsymbol{u}_h$ satisfies equation \eqref{distribustokesfemWG2}.
\end{proof}

\begin{remark}\rm
We can also use the following virtual element space for the velocity based on \eqref{ncve}:
\begin{align*}
&\{\boldsymbol{v}\in L^2(\Omega;\mathbb R^d):\boldsymbol{v}|_T\in V_{k+1,k+1}^{\rm VE}(T)\otimes\mathbb R^d \textrm{ for each }T\in\mathcal{T}_h; \textrm{ DoF \eqref{ve-dof1} is } \\
&\qquad\qquad\qquad\quad\quad\;\;\textrm{ single-valued across each face in $\mathring{\mathcal{F}}_h$, and vanishes on $\partial\Omega$}\}.
\end{align*}
\end{remark}

\subsection{Mass-Conserving-Stress Method}
By introducing $\boldsymbol{\sigma}_h:= \dev \grad_w\boldsymbol{u}_h\in\Sigma_{k}^{\rm tn}$, we can rewrite the mixed finite element method \eqref{distribustokesfemWG} as the MCS method~\cite{GopalakrishnanLedererSchoeberl2020a}: find $\boldsymbol{\sigma}_h\in \Sigma_{k}^{\rm tn}$, $\boldsymbol{u}_h\in\mathring{\mathbb{V}}_{k,\ell}^{\div}$ and $p_h\in\mathbb{P}_{\ell}(\mathcal{T}_h)/\mathbb R$ such that
\begin{align*}
(\boldsymbol{\sigma}_h,\boldsymbol{\tau}_h)+b_h(\boldsymbol{\tau}_h,q_h;\boldsymbol{u}_h)&=0  & & \forall~\boldsymbol{\tau}_h \in \Sigma_{k}^{\rm tn}, q_h\in\mathbb{P}_{\ell}(\mathcal{T}_h)/\mathbb R, \\
b_h(\boldsymbol{\sigma}_h,p_h;\boldsymbol{v}_h) &=-(\boldsymbol{f}, \boldsymbol{v}_h) & &\forall~\boldsymbol{v}_h \in \mathring{\mathbb{V}}_{k,\ell}^{\div},
\end{align*}
where the bilinear form
\begin{align*}
b_h(\boldsymbol{\tau}_h,q_h;\boldsymbol{v}_h)&:=\sum_{T\in \mathcal{T}_{h}}(\div\boldsymbol{\tau}_h, \boldsymbol{v}_h)_T-\sum_{T\in \mathcal{T}_{h}}(\boldsymbol{n}^\intercal\boldsymbol{\tau}_h\boldsymbol{n}, \boldsymbol{n}\cdot\boldsymbol{v}_h)_{\partial T} - (\div\boldsymbol{v}_h, q_h) \\
&\;=-\sum_{T\in \mathcal{T}_{h}}(\boldsymbol{\tau}_h, \grad\boldsymbol{v}_h)_T+\sum_{T\in \mathcal{T}_{h}}(\Pi_F\boldsymbol{\tau}_h\boldsymbol{n}, \Pi_F\boldsymbol{v}_h)_{\partial T} - (\div\boldsymbol{v}_h, q_h).
\end{align*}

The subspace
\begin{equation*}
\left\{
\boldsymbol{\tau}_h \in \Sigma_{k}^{\rm tn} :
\Pi_F \boldsymbol{\tau}_h \boldsymbol{n} \in \mathbb{P}_{k-1}(F; \mathbb{R}^{d-1})
\ \text{for all } F \in \mathcal{F}_h
\right\}
\end{equation*}
was employed in the MCS method~\cite{GopalakrishnanLedererSchoeberl2020a} 
to discretize the traceless stress tensor $\boldsymbol{\sigma}$.  
However, the constraint 
$\Pi_F \boldsymbol{\tau}_h \boldsymbol{n} \in \mathbb{P}_{k-1}(F; \mathbb{R}^{d-1})$
prevents the decoupling of the error and thus not achieving superconvergence properties such as 
the estimates 
$\|\bs \sigma - \bs \sigma_h\|$ 
and $\|Q_{\ell} p - p_h\|$ in~\eqref{eq:errorestimate1}, 
and $\|I_{k,k}^{\div}\boldsymbol{u} - \boldsymbol{u}_h\|$ in~\eqref{eq:errorestimate4}.  

Superconvergence was later obtained in~\cite{GopalakrishnanLedererSchoeberl2020} by enriching the stress space. That approach requires $k=\ell\ge 1$, whereas our formulation also covers the low-order cases $(k,\ell)=(0,-1)$, $(0,0)$, and $(1,0)$. Our analysis is also simpler and more transparent.

\section{Numerical Examples}\label{sec:numericalresults}
In this section, we verify the error estimates for the mixed finite element method~\eqref{distribustokesfemWG} and the postprocessing scheme~\eqref{distribustokespostfem}. Both smooth and singular test problems are considered, with focus on the low-order cases $\ell=0$ and $k=0,1$. The smooth benchmark problems are adapted from~\cite{GopalakrishnanLedererSchoeberl2020}, where examples for $k=\ell \geq 1$ are presented. Throughout, the computational domains are discretized by uniform simplicial meshes, and all computations are carried out in MATLAB using $i$FEM~\cite{Chen2009}.

\subsection{Smooth problems with non-slip boundary conditions}

\begin{example}\label{exm1}
	\normalfont
	We first test the mixed finite element method~\eqref{distribustokesfemWG} in two dimensions on the unit square domain $\Omega=(0,1)^2$. The exact solution is
	$$
	\boldsymbol{u} = \operatorname{curl}\psi_2,
	\qquad
	p = -x^5 - y^5 + \frac{1}{3},
	$$
	where $\psi_2 = x^2(x - 1)^2 y^2(y - 1)^2$.
\end{example}

\begin{table}[htbp]
	\centering
	\caption{Errors for Example~\ref{exm1} with $\ell=0$ and $k=0,1$ in two dimensions.}
	\label{table:numericalerr2Dmerged}
	\renewcommand{\arraystretch}{1.125}
	\resizebox{\textwidth}{!}{
		\begin{tabular}{c|c|cc|cc|cc|cc}
			\toprule
			$h$ & $(k,\ell)$
			& $\|\boldsymbol{\sigma}-\boldsymbol{\sigma}_h\|_{0,h}$ & order
			& $\|p-p_h\|$ & order
			& $\|\boldsymbol{u} - \boldsymbol{u}_h^{*}\|$ & order
			& $\|\nabla_h(\boldsymbol{u} - \boldsymbol{u}_h^{*})\|$ & order \\
			\midrule
			$2^{-3}$ & $(0,0)$ & 3.103e-02 & -- & 7.810e-02 & -- & 1.233e-03 & -- & 2.890e-02 & -- \\
			$2^{-4}$ & $(0,0)$ & 1.677e-02 & 0.89 & 3.914e-02 & 1.00 & 3.277e-04 & 1.91 & 1.481e-02 & 0.96 \\
			$2^{-5}$ & $(0,0)$ & 8.700e-03 & 0.95 & 1.963e-02 & 1.00 & 8.353e-05 & 1.97 & 7.453e-03 & 0.99 \\
			$2^{-6}$ & $(0,0)$ & 4.440e-03 & 0.97 & 9.840e-03 & 1.00 & 2.099e-05 & 1.99 & 3.733e-03 & 1.00 \\
			$2^{-7}$ & $(0,0)$ & 2.247e-03 & 0.98 & 4.931e-03 & 1.00 & 5.256e-06 & 2.00 & 1.867e-03 & 1.00 \\
			\midrule
			$2^{-3}$ & $(1,0)$ & 2.447e-03 & -- & 7.453e-02 & -- & 3.296e-05 & -- & 2.286e-03 & -- \\
			$2^{-4}$ & $(1,0)$ & 6.305e-04 & 1.96 & 3.760e-02 & 0.99 & 4.167e-06 & 2.98 & 5.183e-04 & 1.98 \\
			$2^{-5}$ & $(1,0)$ & 1.597e-04 & 1.98 & 1.880e-02 & 1.00 & 5.264e-07 & 2.99 & 1.463e-04 & 1.99 \\
			$2^{-6}$ & $(1,0)$ & 4.016e-05 & 1.99 & 9.428e-03 & 1.00 & 6.625e-08 & 2.99 & 3.666e-05 & 2.00 \\
			$2^{-7}$ & $(1,0)$ & 1.007e-05 & 2.00 & 4.715e-03 & 1.00 & 8.315e-09 & 2.99 & 9.178e-06 & 2.00 \\
			\bottomrule
		\end{tabular}
	}
\end{table}

\begin{example}\label{exm2}
	\normalfont
	We next consider the mixed finite element method~\eqref{distribustokesfemWG} in three dimensions on the unit cube domain $\Omega=(0,1)^3$. The exact solution is given by
	\[
	\boldsymbol{u} = \operatorname{curl}(\psi_3, \psi_3, \psi_3)^{\intercal},
	\qquad
	p = - x^5 - y^5 - z^5 + \frac{1}{2},
	\]
	where $\psi_3 = x^2(x - 1)^2 y^2(y - 1)^2 z^2(z - 1)^2$.
\end{example}

\begin{table}[htbp]
	\centering
	\caption{Errors for Example~\ref{exm2} with $\ell=0$ and $k=0,1$ in three dimensions.}
	\label{table:numericalerr3Dmerged}
	\renewcommand{\arraystretch}{1.125}
	\resizebox{\textwidth}{!}{
		\begin{tabular}{c|c|cc|cc|cc|cc}
			\toprule
			$h$ & $(k,\ell)$
			& $\|\boldsymbol{\sigma}-\boldsymbol{\sigma}_h\|_{0,h}$ & order
			& $\|p-p_h\|$ & order
			& $\|\boldsymbol{u} - \boldsymbol{u}_h^{*}\|$ & order
			& $\|\nabla_h(\boldsymbol{u} - \boldsymbol{u}_h^{*})\|$ & order \\
			\midrule
			$2^{-1}$ & $(0,0)$ & 9.787e-03 & -- & 2.942e-01 & -- & 4.167e-04 & -- & 4.576e-03 & -- \\
			$2^{-2}$ & $(0,0)$ & 4.960e-03 & 1.06 & 1.649e-01 & 0.84 & 1.565e-04 & 1.41 & 2.880e-03 & 0.67 \\
			$2^{-3}$ & $(0,0)$ & 2.431e-03 & 0.95 & 8.501e-02 & 0.96 & 4.400e-05 & 1.83 & 1.539e-03 & 0.90 \\
			$2^{-4}$ & $(0,0)$ & 1.260e-03 & 0.95 & 4.284e-02 & 0.99 & 1.143e-05 & 1.95 & 7.836e-04 & 0.97 \\
			$2^{-5}$ & $(0,0)$ & 6.397e-04 & 0.98 & 2.146e-02 & 1.00 & 2.889e-06 & 1.98 & 3.937e-04 & 0.99 \\
			\midrule
			$2^{-1}$ & $(1,0)$ & 2.431e-03 & -- & 2.942e-01 & -- & 9.624e-05 & -- & 1.894e-03 & -- \\
			$2^{-2}$ & $(1,0)$ & 5.715e-04 & 2.09 & 1.649e-01 & 0.84 & 1.493e-05 & 2.69 & 5.478e-04 & 1.79 \\
			$2^{-3}$ & $(1,0)$ & 1.541e-04 & 1.89 & 8.501e-02 & 0.96 & 2.035e-06 & 2.88 & 1.486e-04 & 1.88 \\
			$2^{-4}$ & $(1,0)$ & 3.975e-05 & 1.96 & 4.284e-02 & 0.99 & 2.627e-07 & 2.95 & 3.808e-05 & 1.96 \\
			\bottomrule
		\end{tabular}
	}
\end{table}

		Numerical errors for the smooth problems (Examples~\ref{exm1} and~\ref{exm2}) are reported in Tables~\ref{table:numericalerr2Dmerged} and~\ref{table:numericalerr3Dmerged}.
		The observed rates agree with the theoretical estimates~\eqref{eq:errorestimate1} and~\eqref{eq:postH1error}--\eqref{eq:postL2error}. In particular,
	$$
	\|\boldsymbol{\sigma}-\boldsymbol{\sigma}_h\|_{0,h}
	= \mathcal{O}(h^{k+1}),
	\qquad
	\|p-p_h\| = \mathcal{O}(h),
	$$
	and the postprocessed velocity satisfies
	$$
	\|\boldsymbol{u} - \boldsymbol{u}_h^*\|
	= \mathcal{O}(h^{k+2}),
	\qquad
	\|\nabla_h(\boldsymbol{u}-\boldsymbol{u}_h^*)\|
	= \mathcal{O}(h^{k+1}).
	$$
%

\subsection{Singular problem with slip boundary conditions}

We next consider a singular test problem on the two-dimensional L-shaped domain
$\Omega=(-1,1)^2\setminus\big([0,1]\times[-1,0]\big)$ with boundary condition
\[
\boldsymbol{u}\cdot\boldsymbol{n}=0 \quad\text{and}\quad \boldsymbol{t}^{\intercal}\boldsymbol{\sigma}\boldsymbol{n}=0 \quad\text{on }\partial\Omega.
\]
The re-entrant corner at the origin induces reduced regularity of the exact solution and hence reduced convergence rates on uniform meshes.

We impose this boundary condition by using the space $\mathring{\Sigma}_{k}^{\rm tn}$. See Remark~\ref{rm:infsupsigma0}.

\begin{example}\label{ex:Lshape-slip-hom}
		\normalfont
Let $(r,\theta)$ denote polar coordinates centered at the re-entrant corner of $\Omega$.
The corner is located at $(0,0)$, and $\theta\in(0,3\pi/2)$ in $\Omega$. Define, 
		\[
		\Phi(x,y)=(1-x^2)^3(1-y^2)^3,
		\qquad
		\Psi_s(r,\theta)=r^{1+\lambda}\sin((\lambda-1)\theta),
		\qquad
		\lambda=\tfrac13,
		\]
		and set
		\[
		\boldsymbol{u}
		= \curl (\Phi\Psi_s).
		\]
		Further, let 
		\[
		g(\theta)=\sin((\lambda-1)\theta), 
		\quad
		Q(\theta)=(1+\lambda)^2g(\theta)+g''(\theta),
		\quad
		p_s(r,\theta)=\frac{1}{\lambda-1}r^{\lambda-1}Q'(\theta).
		\]
		The pressure is defined by
		\[
		p(x,y)=\Phi(x,y)p_s(r,\theta)-c_0,
		\quad
		c_0=|\Omega|^{-1}\int_\Omega \Phi p_s\,dx,
		\]
		so that $p\in L_0^2(\Omega)$.
\end{example}

\begin{figure}[htbp]
	\centering
	\captionsetup[subfigure]{skip=2pt}
	\setlength{\tabcolsep}{2pt}
	
	\begin{tabular}{ccc}
		\begin{subfigure}{0.32\textwidth}
			\centering
			\includegraphics[width=0.9\linewidth]{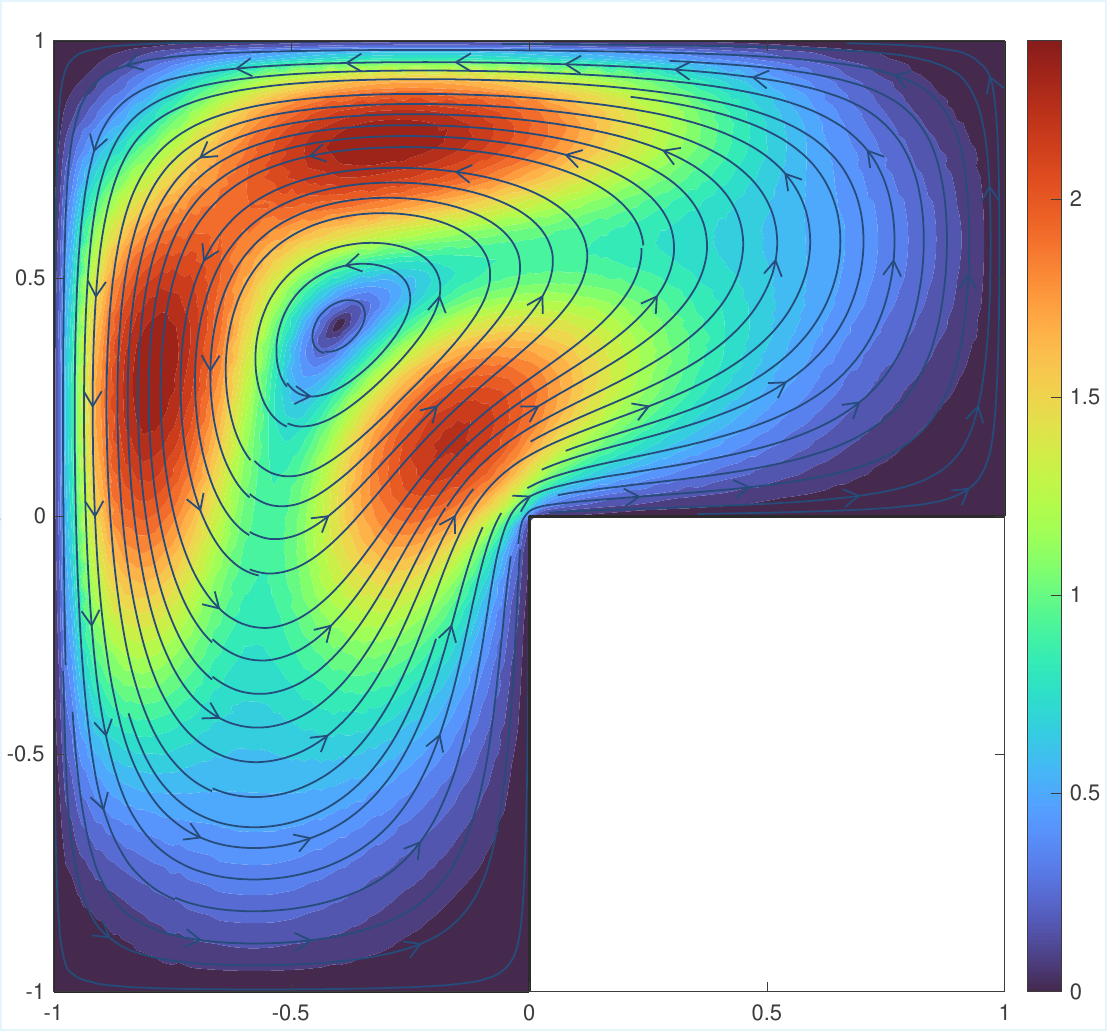}
			\caption{$\boldsymbol{u}_h \text{ for } k=0, \ell=0$}
		\end{subfigure}
		\hfill 
		\begin{subfigure}{0.32\textwidth}
			\centering
			\includegraphics[width=0.9\linewidth]{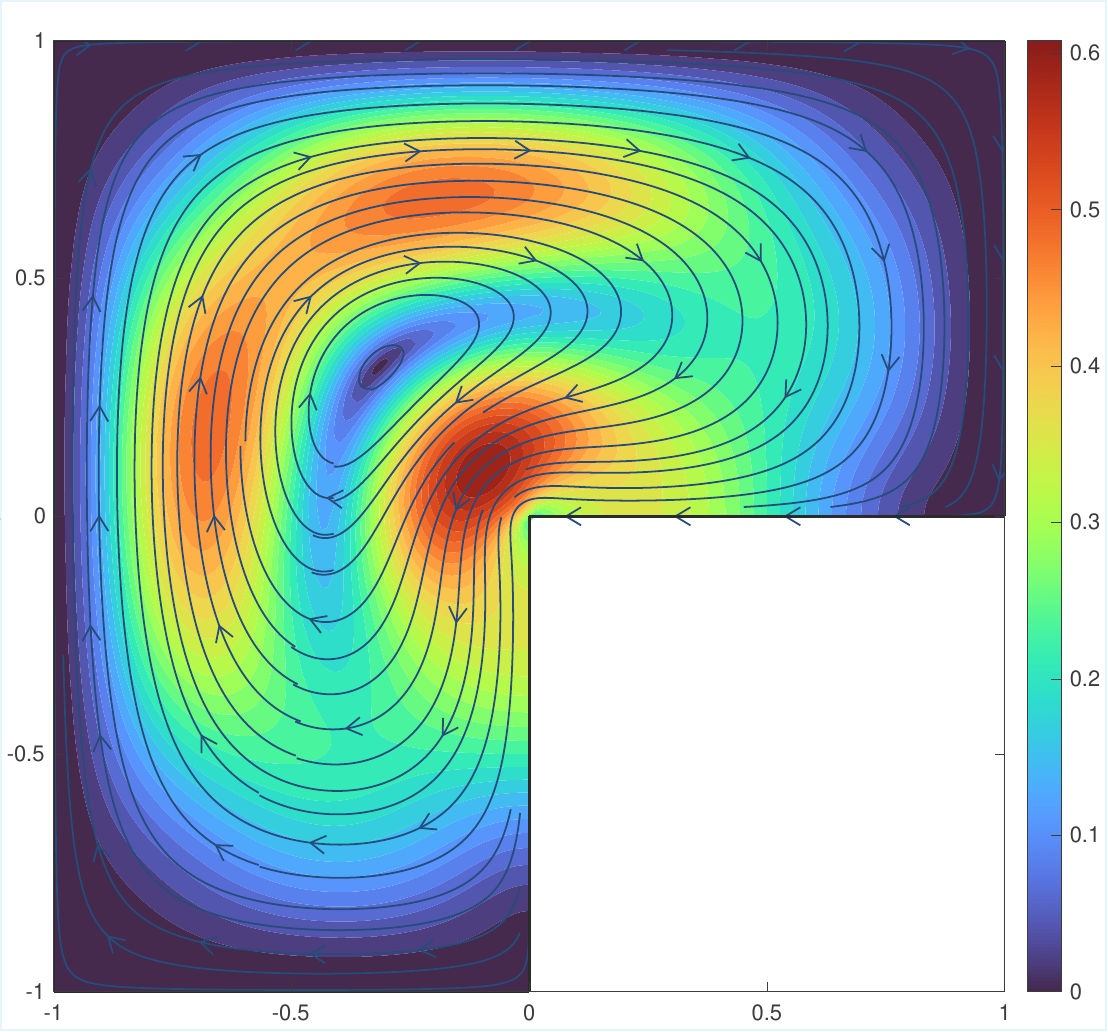}
			\caption{$\boldsymbol{u}_h \text{ for } k=1, \ell=0$}
		\end{subfigure}
		\hfill 
		\begin{subfigure}{0.32\textwidth}
			\centering
			\includegraphics[width=0.9\linewidth]{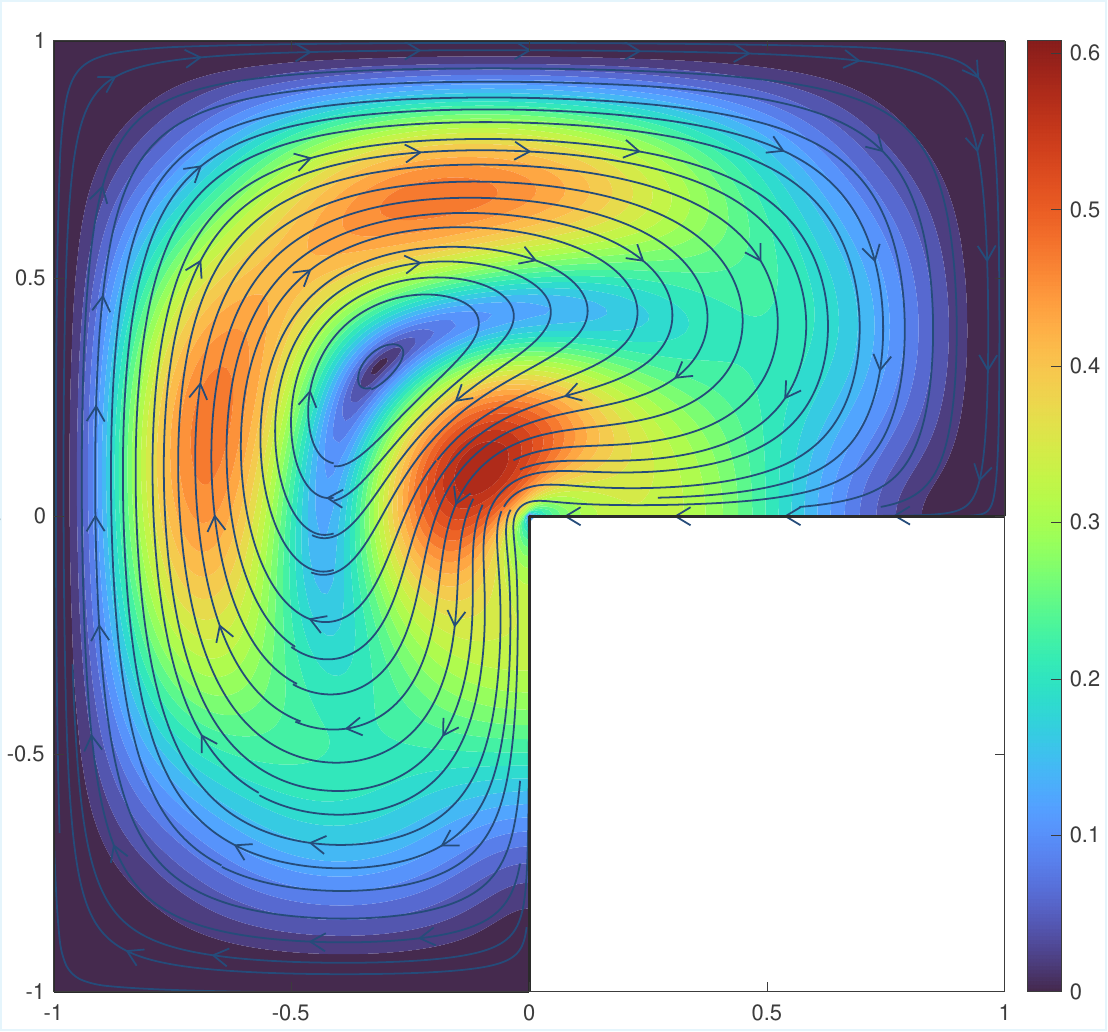}
			\caption{Exact solution $\boldsymbol{u}$}
		\end{subfigure}
	\end{tabular}
\caption{Flow structure of the velocity field $\boldsymbol{u}$ and its approximation $\boldsymbol{u}_h$ on the L-shaped domain for mesh size $h=1/224$. Left and middle: numerical solutions. Right: exact solution.}
	\label{fig:Velocity_field}
\end{figure}

\begin{table}[htbp]
	\centering
		\caption{Errors for Example~\ref{ex:Lshape-slip-hom} with $\ell=0$ and $k=0,1$ in the L-shaped domain.}
		\label{table:numericalerr2DLRT0BDM1}
		\renewcommand{\arraystretch}{1.125}
		\resizebox{10.25cm}{!}{
			\begin{tabular}{c|c|cc|cc|cc}
				\toprule
				$h$ & $(k,\ell)$ 
				& $\|\boldsymbol{u}-\boldsymbol{u}_h\|$ & order 
				& $\|\boldsymbol{\sigma}-\boldsymbol{\sigma}_h\|_{0,h}$ & order 
				& $\|p-p_h\|$ & order \\
				\midrule
				$1/14$  & $(0,0)$ & 1.866e-01 & --   & 1.456e+00 & --   & 1.364e+00 & --   \\
				$1/28$  & $(0,0)$ & 1.156e-01 & 0.70 & 1.023e+00 & 0.51 & 1.006e+00 & 0.44 \\
				$1/56$  & $(0,0)$ & 7.488e-02 & 0.63 & 7.737e-01 & 0.40 & 7.250e-01 & 0.47 \\
				$1/112$ & $(0,0)$ & 4.874e-02 & 0.62 & 6.093e-01 & 0.35 & 5.170e-01 & 0.49 \\
				$1/224$ & $(0,0)$ & 3.153e-02 & 0.63 & 4.865e-01 & 0.33 & 3.693e-01 & 0.49 \\
				\midrule
				$1/14$  & $(1,0)$ & 7.867e-02 & --   & 1.791e+00 & --   & 8.350e-01 & --   \\
				$1/28$  & $(1,0)$ & 5.253e-02 & 0.59 & 1.396e+00 & 0.36 & 6.071e-01 & 0.46 \\
				$1/56$  & $(1,0)$ & 3.455e-02 & 0.60 & 1.097e+00 & 0.35 & 4.423e-01 & 0.46 \\
				$1/112$ & $(1,0)$ & 2.242e-02 & 0.63 & 8.638e-01 & 0.34 & 3.257e-01 & 0.44 \\
				$1/224$ & $(1,0)$ & 1.440e-02 & 0.64 & 6.820e-01 & 0.34 & 2.434e-01 & 0.42 \\
				\bottomrule
			\end{tabular}
	}
\end{table}

The numerical results for Example~\ref{ex:Lshape-slip-hom} are reported in Table~\ref{table:numericalerr2DLRT0BDM1}. The corner singularity implies that $\boldsymbol{\sigma},p\in H^{1/3-\varepsilon}(\Omega)$. Hence, on uniform meshes, one expects
$$
\|\boldsymbol{\sigma}-\boldsymbol{\sigma}_h\|_{0,h}=\mathcal{O}(h^{1/3}),
\quad
\|p-p_h\|=\mathcal{O}(h^{1/3}),
\quad
\|\boldsymbol{u}-\boldsymbol{u}_h\|=\mathcal{O}(h^{2/3}).
$$
The observed rates are consistent with these predictions. Fig.~\ref{fig:Velocity_field} shows the velocity field on the L-shaped domain for two numerical solutions and the exact solution. In particular, the effect of the re-entrant corner is clearly visible.  Although the convergence order of $\boldsymbol{u}_h$ is limited by the regularity, BDM$_1$ with $(k,\ell)=(1,0)$ still performs better than RT$_0$ with $(k,\ell)=(0,0)$ away from the singularity.

\section{Conclusions and Future Work}\label{sec:conclusion}
In this paper, we develop a family of pointwise divergence-free mixed finite element methods for the Stokes equation. The method relies on a distributional discretization of the vector Laplacian, which gives a consistent and stable scheme even though the velocity space is only $H(\div)$-conforming. The key ingredient is the weak divergence operator $\div_w$, together with a distributional divergence-free property of the discrete stress against divergence-free velocity fields. This structure decouples the stress and velocity errors, yields supercloseness for the velocity and pressure, and leads to a superconvergent postprocessed velocity.

The framework also suggests several directions for future work:
\begin{enumerate}[leftmargin = 16pt]
    \item \emph{Brinkman and Navier--Stokes equations}: The same framework is natural for the Brinkman model, where it captures the transition between Darcy and Stokes regimes. For high-Reynolds-number Navier--Stokes flows, additional stabilization is still needed when convection dominates.
    
    \item \emph{Linear elasticity}: The link between stress variables and divergence constraints suggests extensions to mixed elasticity. Symmetric tensor spaces in this setting may lead to robust elements for nearly incompressible materials.
    
    \item \emph{Boundary conditions and regularity}: The weak operators can be adapted to slip and traction boundary conditions through modified boundary trace terms. It is also important to study superconvergence and adaptive finite element methods on nonconvex domains.
\end{enumerate}

\bibliographystyle{abbrv}
\bibliography{./refs}
\end{document}